\documentclass{article}
\usepackage[utf8]{inputenc}
\usepackage[T1]{fontenc}
\usepackage{amsmath}
\usepackage{amssymb}
\usepackage{amsthm}
\usepackage{enumerate}
\usepackage{mathrsfs}
\usepackage{scrextend}
\usepackage{mathabx}
\usepackage{mathtools}
\usepackage[hyperfootnotes=false]{hyperref}
\usepackage{cleveref}
\usepackage[multiple]{footmisc}
\usepackage[a4paper, total={6in, 9in}]{geometry}
\usepackage{scalerel,stackengine}

\newcommand{\N}{\mathbb{N}}
\newcommand{\Z}{\mathbb{Z}}
\newcommand{\Q}{\mathbb{Q}}
\newcommand{\R}{\mathbb{R}}
\newcommand{\C}{\mathbb{C}}
\newcommand{\cX}{\mathcal{X}}

\renewcommand{\P}{\mathbb{P}}

\DeclareMathOperator*{\E}{\mathbb{E}}
\allowdisplaybreaks

\stackMath
\newcommand\reallywidehat[1]{%
\savestack{\tmpbox}{\stretchto{%
  \scaleto{%
    \scalerel*[\widthof{\ensuremath{#1}}]{\kern-.6pt\bigwedge\kern-.6pt}%
    {\rule[-\textheight/2]{1ex}{\textheight}}
  }{\textheight}%
}{0.5ex}}%
\stackon[1pt]{#1}{\tmpbox}%
}
\parskip 1ex

\theoremstyle{plain}
\newtheorem{theorem}{Theorem}
\newtheorem{lemma}[theorem]{Lemma}
\newtheorem{proposition}[theorem]{Proposition}
\newtheorem{corollary}[theorem]{Corollary}
\newtheorem{problem}[theorem]{Problem}

\theoremstyle{definition}

\newtheorem{remark}[theorem]{Remark}

\numberwithin{theorem}{section}

\title{Limiting spectral laws for sparse random circulant matrices}
\author{Adrian Beker\footnote{University of Zagreb, Faculty of Science, Department of Mathematics, Zagreb,
Croatia.\\ Email: \nolinkurl{adrian.beker@math.hr}}}
\date{\today}

\begin{document}

\maketitle

\begin{abstract}
Fix a positive integer $d$ and let $(G_n)_{n\geq1}$ be a sequence of finite abelian groups with orders tending to infinity. For each $n \geq 1$, let $C_n$ be a uniformly random $G_n$-circulant matrix with entries in $\{0,1\}$ and exactly $d$ ones in each row/column. We show that the empirical spectral distribution of $C_n$ converges weakly in expectation to a probability measure $\mu$ on $\C$ if and only if the distribution of the order of a uniform random element of $G_n$ converges weakly to a probability measure $\rho$ on $\N^*$, the one-point compactification of the natural numbers. Furthermore, we show that convergence in expectation can be strengthened to convergence in probability if and only if $\rho$ is a Dirac mass $\delta_m$. In this case, $\mu$ is the $d$-fold convolution of the uniform distribution on the $m$-th roots of unity if $m\in\N$ or the unit circle if $m = \infty$. We also establish that, under further natural assumptions, the determinant of $C_n$ is $\pm\exp((c_{m,d}+o(1))|G_n|)$ with high probability, where $c_{m,d}$ is a constant depending only on $m$ and $d$.
\end{abstract}

\section{Introduction}\label{sec:intro}

\noindent\textbf{History and previous work.} Given an $n\times n$ complex matrix $M$, we denote by $(\lambda_k(M))_{1\leq k\leq n}$ and $(\sigma_k(M))_{1\leq k\leq n}$ its eigenvalues and singular values respectively. The spectral and singular value distribution of $M$ are given by
\[\mu_M \vcentcolon= \frac{1}{n}\sum_{k=1}^{n}\delta_{\lambda_k(M)}, \quad \nu_M \vcentcolon= \frac{1}{n}\sum_{k=1}^{n}\delta_{\sigma_k(M)}.\] 
The study of the limiting behaviour of these measures for various sequences of random matrices is a central topic in random matrix theory. It goes back to the seminal works of Wigner \cite{wigner} and Marchenko--Pastur \cite{marchenko-pastur}, which establish the well-known semi-circular and quarter-circular laws. These results deal with spectra of symmetric or Hermitian random matrices. In the non-Hermitian case, which is usually more challenging, a natural model is given by matrices with independent entries drawn from the same fixed distribution $\xi$. A famous conjecture in this area stated that, whenever $\xi$ has mean zero and unit variance, the empirical spectral distribution (\emph{ESD} from now on) of suitably scaled such matrices converges to a universal limiting law, namely the uniform distribution on the unit disk in the complex plane. Closing a long line of research, this conjecture was resolved in full by Tao and Vu in their celebrated paper \cite{tao-vu-circular}. For a detailed account of the history of this problem, we direct the reader to the comprehensive survey \cite{bordenave-chafai}.

Although this area had seen a wealth of new results and techniques in the past twenty years, some important problems remained open. Recently, one such problem was solved in breakthrough work of Sah, Sahasrabudhe and Sawhney \cite{sah-sahasrabudhe-sawhney}. They proved convergence of ESDs of sparse i.i.d.\ matrices, namely those with entries in $\{0,1\}$ and having a fixed expected number of ones in each row/column. A related problem, which is still open, is the following (see \cite{chafai} or \cite[Problem 5]{tikhomirov}):

\begin{problem}
\label{prob:digraphs}
Fix an integer $d \geq 3$ and for each $n \geq d$ let $A_n$ be a matrix draw uniformly at random from all $n \times n$ matrices with entries in $\{0,1\}$ and exactly $d$ of ones in each row and column. Show that $(\mu_{A_n})_{n\geq d}$ converges weakly in probability to the \emph{oriented Kesten--McKay law}, the $d$-fold free convolution of the uniform distribution on the unit circle.
\end{problem}

Changing gears slightly, we turn our attention to \emph{patterned} random matrices. Such matrices possess additional structure reflected by linear-algebraic relations between their entries. At the same time, this results in a limited amount of stochastic independence in comparison to the more classical models discussed previously. Of particular importance in the literature are Hankel, Toeplitz and circulant matrices \cite{bose-gangopadhyay-sen, bose-hazra-saha, bose-mitra, bose-saha, bose-sen, bose-subhra-saha, bryc-dembo-jiang, meckes-cyclic, meckes-abelian, adamczak, banerjee-bose, barrera-manrique, latala-swiatkowski}; our focus will be on the latter class. Following Diaconis \cite[\S3E]{diaconis}, for a finite group $G$ we define a \emph{$G$-circulant matrix} to be a matrix $(A_{x,y})_{x,y\in G}$ whose rows and columns are indexed by elements of $G$ and whose $(x,y)$-entry is given by $A_{x,y} = a(xy^{-1})$ for some fixed function $a \colon G \to \C$. We think of the first row of $A$ as being indexed by the identity element of $G$. Hence, we will sometimes refer to $A$ as having first row $\widetilde{a}$, where we define
\begin{equation}\label{eq:tilde}
    \widetilde{a} \colon G \to \C, \quad y \mapsto a(y^{-1}).
\end{equation}
In the case when $G = \Z/n\Z$, we thus recover the classical notion of an $n \times n$ circulant matrix. In general, $A$ is the matrix of the convolution operator $f \mapsto a*f$ with respect to the standard basis of $\C^G$. Operators of this kind feature in a wide variety of areas of both pure and applied mathematics, making circulant matrices a very natural object to study.

The study of non-Hermitian random circulant matrices began only fairly recently and has its inception in the work of Meckes \cite{meckes-cyclic}. In that paper, Meckes establishes an analogue of the circular law for classical circulant matrices with first row consisting of i.i.d.\ random variables with mean zero and unit variance. He proves that the ESD of such matrices converges weakly in probability to the standard complex Gaussian distribution. In the follow-up paper \cite{meckes-abelian}, Meckes extended this result to $G$-circulant matrices for arbitrary finite abelian groups $G$. In this more general setting, a mixture of real and complex Gaussian distributions can arise as the limiting law, and this happens in the presence of an asymptotically non-neglible proportion of elements of order $2$ in $G$. When $G$ is allowed to be non-abelian, Adamczak \cite{adamczak} established universality of the limiting singular value distribution and, in the case of independent Gaussian entries, the limiting spectral distribution.

\smallskip

\noindent\textbf{Main results.} In the present paper, we consider the following circulant analogue of Problem \ref{prob:digraphs}. Let $d$ be a positive integer and let $(G_n)_{n\geq1}$ be a sequence of finite abelian groups such that $|G_n| \to \infty$ as $n \to \infty$. This data can be thought of as being fixed throughout the paper. We let $C_n$ be a matrix drawn uniformly at random from $\mathrm{Mat}_d(G_n)$, the set of all $G_n$-circulant matrices with entries in $\{0,1\}$ and exactly $d$ ones in each row/column. We are interested in the limiting behaviour of the ESDs $(\mu_{C_n})_{n\geq1}$ in the sense of weak convergence in expectation and in probability.\footnote{For a precise definition of these terms, see Section \ref{subsec:prob_weak_conv}.}

To state our results, we require some notation. If $m$ is a positive integer, we let $R_m$ be the set of $m$-th roots of unity in $\C$. We define
\[\eta_m \vcentcolon= \frac{1}{m}\sum_{k=1}^{m}\delta_{e^{2\pi ki/m}}\]
to be the uniform distribution on $R_m$ and $\eta_{\infty}$ to be the uniform distribution on the unit circle in $\C$. We use the additive-combinatorial notation $dR_m$ to denote the $d$-fold sumset of $R_m$, that is, the set of all possible sums of $d$ elements of $R_m$. Similarly, $\eta_m^{*d}$ denotes the $d$-fold convolution of $\eta_m$, where we allow $m$ to be equal to $\infty$. Given a finite abelian group $G$ and an element $x \in G$, we denote by $\mathrm{ord}_G(x)$ its order in $G$ and define
\[\rho_G \vcentcolon= \frac{1}{|G|}\sum_{x\in G}\delta_{\mathrm{ord}_G(x)}.\]
For the purposes of taking weak limits, it is convenient to regard $\rho_G$ as a probability measure on $\N^*$, the one-point compactification of $\N$. Finally, recall that the \emph{exponent} of $G$, written $\exp(G)$, is the least positive integer $r$ such that $rx = 0$ for all $x \in G$.

Our first result characterises when the sequence $(\mu_{C_n})_{n\geq1}$ converges weakly in expectation and identifies the limiting law if convergence holds.

\begin{theorem}
\label{thm:conv_in_exp}
Suppose that $(\mu_{C_n})_{n\geq1}$ converges weakly in expectation. Then there exists a probability measure $\rho$ on $\N^*$ such that $\rho_{G_n} \to \rho$ weakly. Conversely, if $\rho_{G_n} \to \rho$ weakly, then $\mu_{C_n} \to \mu$ weakly in expectation, where
\begin{equation}\label{eq:limit_in_exp}
    \mu = \sum_{m\in\N^*}\rho(\{m\})\eta_m^{*d}.
\end{equation}
\end{theorem}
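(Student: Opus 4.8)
The plan is to diagonalise $C_n$ by Fourier analysis on $G_n$ and thereby reduce the computation of $\E\mu_{C_n}$ to an elementary calculation involving orders of group elements. Realise $C_n$ as the matrix of the convolution operator $f\mapsto\mathbf{1}_S*f$ on $\C^{G_n}$, where $S\subseteq G_n$ is a uniformly random $d$-element subset; then the characters $\chi\in\widehat{G_n}$ form an eigenbasis, with eigenvalues $\sum_{x\in S}\overline{\chi(x)}$, and since $\chi\mapsto\overline{\chi}$ permutes $\widehat{G_n}$ the multiset of eigenvalues equals $\{\sum_{x\in S}\chi(x):\chi\in\widehat{G_n}\}$. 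Hence, regarding $\E\mu_{C_n}$ as the honest probability measure $B\mapsto\E[\mu_{C_n}(B)]$ on $\C$, it is exactly the law of $\sum_{x\in S}\chi(x)$ with $\chi$ uniform in $\widehat{G_n}$ and $S$ an independent uniform $d$-subset; all these measures are supported in the compact disk $\overline{D}(0,d)$, so it suffices to test against (automatically bounded and uniformly continuous) functions there. First I would replace $S$ by an i.i.d.\ sample: letting $\widetilde{\mu}_n$ be the law of $\sum_{i=1}^{d}\chi(x_i)$ with $x_1,\dots,x_d$ independent uniform in $G_n$, conditioning on the probability-$(1-O_d(1/|G_n|))$ event that the $x_i$ are distinct (on which $\{x_1,\dots,x_d\}$ is a uniform $d$-subset, independent of $\chi$) shows $\|\widetilde{\mu}_n-\E\mu_{C_n}\|_{\mathrm{TV}}=O_d(1/|G_n|)\to0$.

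Next I would identify $\widetilde{\mu}_n$. Condition on $\chi$ and put $m\vcentcolon=\mathrm{ord}(\chi)$; since $\chi\colon G_n\to R_m$ is a surjective homomorphism its fibres are equinumerous, so each $\chi(x_i)$ is uniform on $R_m$, and by independence the conditional law of $\sum_i\chi(x_i)$ is $\eta_m^{*d}$. Averaging over $\chi$, and using Pontryagin duality $\widehat{G_n}\cong G_n$ — so that $\mathrm{ord}(\chi)$ has distribution $\rho_{G_n}$ — gives $\widetilde{\mu}_n=\sum_{m\in\N}\rho_{G_n}(\{m\})\,\eta_m^{*d}$. I would then prove that $\Phi\colon\N^*\to\mathcal{P}(\C)$, $m\mapsto\eta_m^{*d}$, is continuous for the one-point-compactification topology on $\N^*$ and the weak topology on $\mathcal{P}(\C)$: the only nontrivial point is continuity at $\infty$, i.e.\ $\eta_m^{*d}\to\eta_\infty^{*d}$, which follows from $\eta_m\to\eta_\infty$ (compare the moments $\int z^j\overline{z}^k$, Riemann sums on the circle) and continuity of $d$-fold convolution (all measures being supported in $\overline{D}(0,1)$). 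Consequently $h(m)\vcentcolon=\int_\C g\,d\eta_m^{*d}$ is continuous and bounded on the compact space $\N^*$ for every $g\in C_b(\C)$, so if $\rho_{G_n}\to\rho$ weakly then
\[\int_\C g\,d\widetilde{\mu}_n=\int_{\N^*}h\,d\rho_{G_n}\longrightarrow\int_{\N^*}h\,d\rho=\int_\C g\,d\mu,\qquad\mu\vcentcolon=\sum_{m\in\N^*}\rho(\{m\})\eta_m^{*d},\]
and together with the total-variation estimate this gives $\E\mu_{C_n}\to\mu$ weakly, i.e.\ $\mu_{C_n}\to\mu$ weakly in expectation, as in \eqref{eq:limit_in_exp}. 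This settles the converse implication.

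For the direct implication I would use compactness of $\mathcal{P}(\N^*)$: any subsequence of $(\rho_{G_n})$ has a weakly convergent further subsequence whose limit $\rho$ satisfies, by the converse, $\sum_{m\in\N^*}\rho(\{m\})\eta_m^{*d}=\lim_n\E\mu_{C_n}$, the same measure for every choice of subsequence. It thus suffices to show the map $\rho\mapsto\mu_\rho\vcentcolon=\sum_{m\in\N^*}\rho(\{m\})\eta_m^{*d}$ is injective on $\mathcal{P}(\N^*)$, for then $(\rho_{G_n})$ has a unique subsequential limit and hence converges. Injectivity follows by considering the holomorphic moments $M_n\vcentcolon=\int_\C z^n\,d\mu_\rho$ (well defined by compact support): from $\E\exp(t\sum_i\zeta_i)=(\E\exp(t\zeta_1))^d$ for $\zeta_i$ i.i.d.\ uniform on $R_m$ one computes $\int z^n\,d\eta_m^{*d}=0$ whenever $n\geq1$ and $m=\infty$ or $m>n$, and $\int z^m\,d\eta_m^{*d}=d$; hence $M_n=d\,\rho(\{n\})+(\text{a combination of }\rho(\{m\})\text{ with }m\mid n,\ m<n)$, which recovers $\rho(\{n\})$ for all $n\in\N$ inductively, and then $\rho(\{\infty\})=1-\sum_{n\in\N}\rho(\{n\})$.

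I expect the main obstacle to be the treatment of the point $\infty$: one must check that the vanishing — but a priori arbitrarily distributed — fraction of characters of small order does no harm, and dually that the mass $\rho(\{\infty\})$ is correctly accounted for on both sides. The one-point compactification of $\N$, the continuity of $m\mapsto\eta_m^{*d}$ there, and the triangular structure of the moment system are precisely what make this work; the remaining ingredients (the eigenvalue computation, the subset-versus-i.i.d.\ coupling, and the weak limit) are routine.
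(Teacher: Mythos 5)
Your proposal is correct, and the converse direction (continuity of $m\mapsto\eta_m^{*d}$ on $\N^*$, then testing $\rho_{G_n}\to\rho$ against $\Phi(g)\in C(\N^*)$) matches the paper essentially step for step. The forward direction, however, is a genuinely different argument. The paper proceeds constructively: it shows that each $\delta_m$ ($m\in\N$) lies in the closed linear span of $\mathrm{im}(\Phi|_{C_b(\C)})$ in $C(\N^*)$, via two lemmas — first approximating the indicator $1_{\{de(1/m)\}}$ by tent functions so that $g_m\vcentcolon=\Phi(1_{\{de(1/m)\}})$ lies in the closure of the image (using the small-ball estimate and the equality case of the triangle inequality to compute $\Phi(1_{\{de(1/m)\}})(n)=n^{-d}1_{\{m\mid n\}}$), and second using M\"obius inversion in $C(\N^*)$ to write $\delta_m$ as a limit of linear combinations of the $g_{km}$. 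You instead exploit compactness of $\mathcal{P}(\N^*)$ (the space $\N^*$ being compact metric) and reduce the problem to the injectivity of $\rho\mapsto\mu_\rho$, which you establish by a triangular system of holomorphic moments: since $\int z^n\,d\eta_m^{*d}$ vanishes for $m\nmid n$ (and for $m=\infty$) and equals $d$ at $m=n$, the numbers $\rho(\{n\})$ are recoverable inductively from $\int z^n\,d\mu_\rho$, and $\rho(\{\infty\})$ is pinned by total mass. Both routes are elementary; yours is softer and arguably cleaner (compactness does the heavy lifting, and it isolates the key algebraic fact as an injectivity statement), while the paper's is more constructive and produces explicit test functions, which makes the quantitative dependence visible. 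The remaining pieces in your write-up (the eigenvalue computation via characters, conditioning on order of $\chi$ to get $\eta_m^{*d}$, the TV contiguity between the with- and without-replacement models) are exactly what the paper does implicitly by switching to the i.i.d.\ model.

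One small point worth making explicit in a final write-up: the moments $\int z^n\,d\mu_\rho$ involve the complex-valued test functions $z\mapsto z^n$, so you should either split into real and imaginary parts to invoke weak convergence, or note that weak convergence on the compactly supported measures here extends automatically to complex-valued continuous test functions. This is routine, but since you are relying on these moments to characterise the subsequential limit it deserves a sentence.
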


The second result, which can be regarded as the main contribution of the paper, tells us precisely when one can upgrade weak convergence in expectation to the stronger notion of weak convergence in probability.

\begin{theorem}
\label{thm:conv_in_prob}
Suppose that $(\mu_{C_n})_{n\geq1}$ converges weakly in probability. Then there exists $m \in \N^*$ such that $\rho_{G_n} \to \delta_m$ weakly. Conversely, if $\rho_{G_n} \to \delta_m$ weakly, then $\mu_{C_n} \to \eta_m^{*d}$ weakly in probability.
\end{theorem}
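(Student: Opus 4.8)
The plan is to use the method of moments together with a second-moment concentration argument. The starting point is that the eigenvalues of the $G_n$-circulant matrix with $\{0,1\}$ symbol $\mathbf 1_{S_n}$ (where $S_n\subseteq G_n$ is the uniformly random $d$-element set corresponding to $C_n$) are precisely the Fourier coefficients $\sum_{x\in S_n}\chi(x)$, $\chi\in\widehat{G_n}$; hence, by orthogonality of characters,
\[
\int_{\C} z^j\bar z^k\,d\mu_{C_n} \;=\; N_{j,k}(S_n)\;\vcentcolon=\;\#\bigl\{(x_1,\dots,x_j,y_1,\dots,y_k)\in S_n^{\,j+k}\ :\ x_1+\dots+x_j=y_1+\dots+y_k\bigr\}.
\]
Since every $\mu_{C_n}$ is supported in $\{|z|\le d\}$, weak convergence (in probability or in expectation) of $\mu_{C_n}$ to a fixed measure is equivalent to convergence of all such mixed moments, and for bounded random variables convergence in probability to a constant is equivalent to convergence of the expectation together with vanishing of the variance. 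Combined with Theorem \ref{thm:conv_in_exp} (which, when $\rho_{G_n}\to\delta_m$, gives $\mu_{C_n}\to\eta_m^{*d}$ in expectation, i.e. $\E N_{j,k}(S_n)\to\int z^j\bar z^k\,d\eta_m^{*d}$) and the fact that weak convergence in probability implies weak convergence in expectation (bounded convergence, so Theorem \ref{thm:conv_in_exp} still supplies a weak limit $\rho$ of $\rho_{G_n}$), the statement reduces to: (i) if $\rho_{G_n}\to\delta_m$ then $\mathrm{Var}\bigl(N_{j,k}(S_n)\bigr)\to0$ for all $j,k$; and (ii) if $\rho_{G_n}\to\rho$ with $\rho$ not a Dirac mass, then $\mathrm{Var}\bigl(N_{j,k}(S_n)\bigr)\not\to0$ for some $j,k$.

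For (i), I would first observe that when $m$ is finite the hypothesis forces $|G_n\setminus G_n[m]|=o(|G_n|)$, so $S_n\subseteq G_n[m]$ with probability $1-o(1)$, and on this event restricting characters shows that $\mu_{C_n}$ coincides with the ESD of the $G_n[m]$-circulant matrix with symbol $\mathbf 1_{S_n}$; since $\rho_{G_n[m]}\to\delta_m$ as well and $\exp(G_n[m])=m$ for large $n$, we may assume $\exp(G_n)=m$ (for $m=\infty$ this step is skipped). The key deterministic input is then the dichotomy that a linear equation $\sum_t c_t x_t=0$ over $G_n$ has $|G_n|^{\,s-1}|G_n[\gcd_t c_t]|$ solutions, hence is either \emph{degenerate} (all $c_t\equiv0\bmod m$, with $c_t=0$ when $m=\infty$; then it holds identically, with $\sim|G_n|^s$ solutions) or \emph{non-degenerate} (then it has $o(|G_n|^s)$ solutions, since $|G_n[g]|=o(|G_n|)$ whenever $m\nmid g$). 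Now write $N_{j,k}(S)=D_{j,k}+E_{j,k}(S)$, where $D_{j,k}$ counts the index patterns $(\vec a,\vec b)\in[d]^j\times[d]^k$ whose associated equation in the distinct elements of $S$ is degenerate — equivalently, every value has net multiplicity divisible by $m$ — and $E_{j,k}(S)\ge0$ counts the rest. Crucially $D_{j,k}$ is \emph{deterministic}, so $\mathrm{Var}(N_{j,k})=\mathrm{Var}(E_{j,k})$, while $E_{j,k}\ge0$ and $\E E_{j,k}(S_n)\le O_d(1)\cdot o(1)\to0$ since each non-degenerate pattern is satisfied with probability $o(1)$. Finally $\mathrm{Var}(E_{j,k})\le\E[E_{j,k}^2]=\sum\P[\text{two non-degenerate patterns are both satisfied}]$; each summand is bounded by the probability that a single non-degenerate equation holds, which is $o(1)$, and there are only $O_d(1)$ pairs of patterns, so $\E[E_{j,k}^2]\to0$. (One checks $D_{j,k}=\int z^j\bar z^k\,d\eta_m^{*d}$, in agreement with Theorem \ref{thm:conv_in_exp}, but only the variance is needed here.)

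For (ii), suppose $\rho$ is not a Dirac mass and put $q_g\vcentcolon=\rho(\{m'\in\N:m'\mid g\})$ for $g\in\N$. As $\rho$ must carry positive mass on some finite integer (otherwise $\rho=\delta_\infty$), the set $\{g:q_g>0\}$ is nonempty and has a minimal element $g_0$ under divisibility; minimality forces $q_{g'}=0$ for every proper divisor $g'\mid g_0$, whence $q_{g_0}=\rho(\{g_0\})\in(0,1)$ (the upper bound because $\rho$ is not a Dirac mass). Consider the single moment $N_{g_0,0}(S_n)=\int z^{g_0}\,d\mu_{C_n}$: its degenerate part consists exactly of the ``all equal'' patterns $x_1=\dots=x_{g_0}$ with $g_0x_1=0$ and therefore equals $|S_n\cap G_n[g_0]|$, whereas every other pattern uses $s\ge2$ distinct values with net-multiplicity-$\gcd$ a proper divisor $g'$ of $g_0$, hence is satisfied with probability tending to $q_{g'}=0$; as in (i) the resulting correction is negligible in $L^2$. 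Since $|S_n\cap G_n[g_0]|$ is hypergeometric with $|G_n[g_0]|/|G_n|\to q_{g_0}$, we obtain $\mathrm{Var}\bigl(N_{g_0,0}(S_n)\bigr)\to d\,q_{g_0}(1-q_{g_0})>0$, which is incompatible with weak convergence in probability of $\mu_{C_n}$.

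The main obstacle is the variance bound in (i). The point that makes it tractable is the decomposition $N_{j,k}=D_{j,k}+E_{j,k}$ with $D_{j,k}$ \emph{deterministic}: this bypasses the delicate cancellations between ``merged'' and ``unmerged'' configurations that arise if one expands $\E[N_{j,k}^2]-(\E N_{j,k})^2$ directly, and reduces everything to the crude fact that a non-degenerate linear equation over $G_n$ has a vanishing proportion of solutions — which is precisely the content of $\rho_{G_n}\to\delta_m$ read through $|G_n[g]|/|G_n|=\rho_{G_n}(\{m':m'\mid g\})$. Some care is also needed to treat finite $m$ and $m=\infty$ uniformly (the subgroup reduction and the interpretation of ``divisible by $m$'') and to verify the combinatorial identity for $D_{j,k}$.
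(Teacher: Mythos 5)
Your proposal is correct in outline and, as far as I can tell, in detail, but it takes a genuinely different route from the paper's proof. You use the \emph{mixed-moment} method that the paper describes in the Methodology paragraph of Section \ref{sec:intro} as an alternative to Hermitisation and then deliberately declines to pursue: since $C_n$ is normal with spectrum confined to $\overline{B}(0,d)$, the quantities $\int z^j\bar z^k\,d\mu_{C_n}$ together with Stone--Weierstrass already determine weak limits, which lets you bypass Proposition \ref{prop:herm} and the uniform integrability results of Section \ref{sec:unif_int_log} entirely. The paper instead Hermitises, computes the variance of the shifted singular-value moments $W_{n,k,z}$ (Lemma \ref{lm:moments_of_esd}), uses Lemma \ref{lm:covariance} (positive correlation of the events $E_{n,k,j}$, obtained from Lemma \ref{lm:lin_sys}) to reduce to vanishing covariances, diagonalises the resulting $2\times2$ integer systems by Smith normal form, and deduces that the limit function $\tau$ must satisfy the functional equation $\tau(\gcd(a,b))=\tau(a)\tau(b)$, whence $\tau=1_{m\N_0}$.

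Your variance analysis is also organised differently and is, I think, more elementary. Rather than isolating a functional equation, you split each moment $N_{j,k}=D_{j,k}+E_{j,k}$ where $D_{j,k}$ counts index patterns whose linear form $\sum_i c_i X_{n,i}$ has all coefficients divisible by $m$ (hence holds identically on the event $S_n\subseteq G_n[m]$, which has probability $1-o(1)$), and $E_{j,k}\ge0$ counts the remaining patterns, each of which is satisfied with probability $\tau_{G_n}(\gcd_i c_i)\to 0$; a crude $\ell^2$ bound on the $O_d(1)$ error terms gives $\mathrm{Var}(N_{j,k})\to0$. For the forward implication, instead of solving a functional equation you exhibit a concrete failing moment: choosing $g_0$ minimal under divisibility with $q_{g_0}=\rho(\{g_0\})\in(0,1)$, the dominant contribution to $N_{g_0,0}$ is $|S_n\cap G_n[g_0]|$, which is asymptotically $\mathrm{Bin}(d,q_{g_0})$ (or hypergeometric in the subset model) and has variance $\to dq_{g_0}(1-q_{g_0})>0$. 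This is a nice, hands-on argument. One small imprecision worth flagging: the decomposition makes $D_{j,k}$ deterministic only on the event $S_n\subseteq G_n[m]$; off that $o(1)$-probability event the degenerate patterns need not all fire, so strictly the first summand is $D_{j,k}1_{\{S_n\subseteq G_n[m]\}}$ plus a bounded remainder, but since $N_{j,k}$ is uniformly bounded this only perturbs the variance by $o(1)$ and the conclusion is unaffected. Similarly the move ``we may assume $\exp(G_n)=m$'' via restriction of characters to $G_n[m]$ is valid because the normalised eigenvalue multisets of the $G_n$- and $G_n[m]$-circulant matrices coincide on that event, but deserves a sentence of justification. The trade-off relative to the paper's route is that your approach is shorter and more self-contained, while the paper's Hermitisation route also establishes the uniform integrability of $\log$ (Proposition \ref{prop:unif_int}), which is needed anyway for Theorem \ref{thm:asymp_det}; this is exactly the reason the paper gives for preferring Hermitisation.
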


\begin{remark}
\label{rem:exp_but_not_prob}
It can happen that $(\mu_{C_n})_{n\geq1}$ converges weakly in expectation, but not in probability. For instance, if $G_n = (\Z/2\Z)\oplus(\Z/3\Z)^n$, then $\rho_{G_n} \to \frac{1}{2}\delta_3 + \frac{1}{2}\delta_6$ weakly, which is not a Dirac mass.
\end{remark}

\begin{remark}
\label{rem:examples}  
It is useful to keep in mind the following two prototypical examples for the sequence $(G_n)_{n\geq1}$. First, if $G_n = \Z/n\Z$, it is easy to see that $\rho_{G_n} \to \delta_{\infty}$ weakly, so $(\mu_{C_n})_{n\geq1}$ converges weakly in probability to $\eta_{\infty}^{*d}$, the $d$-fold convolution of the uniform distribution on the unit circle. We thus obtain a commutative counterpart of the limiting law predicted by Problem \ref{prob:digraphs}. On the other hand, in the case when $G_n = (\Z/m\Z)^n$ with $m \in \N$ fixed, we have that $\rho_{G_n} \to \delta_m$ weakly, so $\mu_{C_n} \to \eta_m^{*d}$ weakly in probability.
\end{remark}

Our final result states that, provided no matrix in $\mathrm{Mat}_d(G_n)$ is singular, asymptotically almost all matrices in $\mathrm{Mat}_d(G_n)$ have determinant $\pm\exp((c_{m,d}+o(1))|G_n|)$. A similar result for circulant matrices whose first row consists of independent Rademacher random variables is established in forthcoming work of Eberhard and \'O Cath\'ain \cite{eberhard-ocathain}.

\begin{theorem}
\label{thm:asymp_det}
Suppose that $0 \not\in dR_{\exp(G_n)}$ for all $n \geq 1$ and $\rho_{G_n} \to \delta_m$ weakly, where $m \in \N^*$. Then
\[\frac{1}{|G_n|}\log|\det(C_n)| \to c_{m,d}\] 
in probability, where
\[c_{m,d} = \int_{\C}\log|z|\,d\eta_m^{*d}(z)\]
is a non-negative real number. In other words, for any $\varepsilon, \delta > 0$ there exists $n_0 \geq 1$ such that for all $n \geq n_0$ we have
\[\P\Bigl(\exp\bigl((c_{m,d}-\varepsilon)|G_n|\bigr) \leq |\det(C_n)| \leq \exp\bigl((c_{m,d}+\varepsilon)|G_n|\bigr)\Bigr) \geq 1-\delta.\]
\end{theorem}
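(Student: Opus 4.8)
The plan is to pass to the empirical spectral distribution, apply Theorem~\ref{thm:conv_in_prob}, and treat the logarithmic singularity at the origin separately. Diagonalising $C_n$ by the characters of $G_n$, its eigenvalues are $\lambda_\chi := \sum_{s\in S_n}\chi(s)$ for $\chi$ in the dual group $\widehat{G_n}$, where $S_n\subseteq G_n$ is the (uniformly random) $d$-element support of the first row of $C_n$; hence
\[\frac{1}{|G_n|}\log|\det(C_n)| = \frac{1}{|G_n|}\sum_{\chi\in\widehat{G_n}}\log|\lambda_\chi| = \int_{\C}\log|z|\,d\mu_{C_n}(z).\]
Since $\lambda_\chi\in dR_{k}$ with $k\mid\exp(G_n)$ (here $k$ denotes the order of $\chi$ in $\widehat{G_n}$), the hypothesis $0\notin dR_{\exp(G_n)}$ forces $\lambda_\chi\neq0$, so this quantity is well defined; as $C_n$ is an integer matrix, in fact $\det(C_n)\in\Z\setminus\{0\}$ and the quantity is non-negative (whence $c_{m,d}\geq0$ in the limit). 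Also $|\lambda_\chi|\leq d$, so $\mu_{C_n}$ and $\eta_m^{*d}$ are all supported on the disc $\{|z|\leq d\}$, on which $z\mapsto\log|z|$ is bounded above with an integrable singularity only at $0$. Since $\mu_{C_n}\to\eta_m^{*d}$ weakly in probability by Theorem~\ref{thm:conv_in_prob} and $\int\log|z|\,d\eta_m^{*d}=c_{m,d}$ is finite, the standard ``weak convergence plus uniform integrability'' principle reduces the theorem to the estimate
\begin{equation}\label{eq:ui}
  \lim_{\varepsilon\downarrow0}\ \limsup_{n\to\infty}\ \E\!\left[\frac{1}{|G_n|}\sum_{\chi\,:\,|\lambda_\chi|<\varepsilon}\log\frac{1}{|\lambda_\chi|}\right]=0.
\end{equation}

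To prove \eqref{eq:ui} I would split the characters according to their order $k$, using a cutoff $T$. As a finite abelian group is isomorphic to its dual, $\rho_{\widehat{G_n}}=\rho_{G_n}\to\delta_m$, so the characters with $k\leq T$ and $k\neq m$ number only $o(|G_n|)$; each such $\chi$ has $|\lambda_\chi|\geq\psi(k,d)$, where $\psi(j,d):=\min\{|z|:z\in dR_j\}$, and since $0\notin dR_j$ for every $j\mid\exp(G_n)$ this is bounded below by the positive constant $\min\{\psi(i,d):i\leq T,\ 0\notin dR_i\}$ depending only on $T,d$; hence their contribution to \eqref{eq:ui} is $o(1)$ for each fixed $T$. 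When $m$ is finite, the characters with $k=m$ have $|\lambda_\chi|\geq\psi(m,d)>0$ and so disappear once $\varepsilon<\psi(m,d)$. This leaves the characters of order $k>T$, where genuine probabilistic input is needed.

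Fix a character $\chi$ of order $k$. Conditioning on all but one of the $d$ elements of $S_n$, $\lambda_\chi$ is a fixed number plus an almost-uniform $R_k$-valued random variable (the fibres of $\chi$ have size $|G_n|/k$), and at most $O(\sqrt{s}\,k+1)$ of the $k$-th roots of unity lie within distance $s$ of any given point; this gives the anti-concentration bound $\P(|\lambda_\chi|<s)\leq C_0(\sqrt{s}+1/k)$ for all $s>0$, with $C_0$ absolute, uniformly over $|G_n|\geq2d$. A layer-cake computation then bounds the expected contribution of one such $\chi$ to \eqref{eq:ui} by $O\!\big(\sqrt{\varepsilon}\log\tfrac1\varepsilon\big)+O\!\big(\tfrac{\log(1/\varepsilon)}{k}\big)+O\!\big(\tfrac{\log(1/\psi(k,d))}{k}\big)$, provided one knows $\log(1/\psi(k,d))=o_d(k)$. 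Summing over the characters of order $k>T$, and using $\sum_k N_{n,k}\leq|G_n|$ and $\sum_{k>T}N_{n,k}/k\leq|G_n|/T$ (where $N_{n,k}$ is the number of order-$k$ characters), the total is at most $\big(O(\sqrt{\varepsilon}\log\tfrac1\varepsilon)+O(\tfrac{\log(1/\varepsilon)+1}{T})+O(\sup_{k>T}\tfrac{\log(1/\psi(k,d))}{k})\big)|G_n|$; letting $T\to\infty$ and then $\varepsilon\downarrow0$ makes this $o(|G_n|)$, which together with the bounded-order estimates yields \eqref{eq:ui} and hence the theorem. (When $m$ is finite the order-$>T$ step is much easier: only $o(|G_n|)$ characters have order $>T\geq m$, and grouping them into Galois orbits $\omega$, each integer $N_\omega:=\prod_{\chi\in\omega}\lambda_\chi$ is nonzero, so $\sum_{\chi\in\omega}\max(-\log|\lambda_\chi|,0)\leq\sum_{\chi\in\omega}\max(\log|\lambda_\chi|,0)\leq|\omega|\log d$, making their contribution $o(1)$ without any anti-concentration.)

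The main obstacle is the quantitative separation $\log(1/\psi(k,d))=o_d(k)$, that is, the fact that a nonzero sum of $d$ $k$-th roots of unity cannot be exponentially (in $k$) close to $0$; establishing a polynomial bound $\psi(k,d)\gg_d k^{-O_d(1)}$ — either by refining the trivial degree/norm bound $\psi(k,d)\geq d^{1-\varphi(k)}$, or by invoking known results on small sums of roots of unity — is the technical heart of the argument, and this is exactly the point at which the hypothesis $0\notin dR_{\exp(G_n)}$ is indispensable. One must also verify that the anti-concentration estimate is uniform, the sampling-without-replacement correction being of order $O(d^2/|G_n|)$.
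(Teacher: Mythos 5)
Your high-level reduction is the same as the paper's: write $\frac{1}{|G_n|}\log|\det C_n|=\int\log|z|\,d\mu_{C_n}(z)=\int\log\,d\nu_{C_n}$, invoke Theorem~\ref{thm:conv_in_prob} (so $\nu_{C_n}\to|\cdot|_*\eta_m^{*d}$ weakly in probability), and observe that the only issue is uniform integrability of $\log$ near $0$. The paper packages exactly this as Proposition~\ref{prop:unif_int} applied at $z=0$ together with Lemma~\ref{lm:unif_int_prob}.

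The gap is in how you propose to prove the uniform integrability. Your bound for characters of high order $k$ reduces to showing $\log(1/\psi(k,d))=o_d(k)$, i.e.\ that a nonzero sum of $d$ many $k$-th roots of unity cannot be exponentially small in $k$, and you even suggest this should follow from a polynomial lower bound $\psi(k,d)\gg_d k^{-O_d(1)}$. But no such bound is known: the best available estimate is exactly Myerson's $\psi(k,d)\geq d^{1-\varphi(k)}$ (Lemma~\ref{lm:sum_roots_unity}), which for typical $k$ gives $\log(1/\psi(k,d))\asymp_d k$ and is therefore \emph{not} $o_d(k)$. The paper is explicit that a polynomial bound is a well-known open problem, so your plan stalls at precisely the point you flag as its ``technical heart.'' The paper's way around this is a \emph{quadratic} small-ball estimate: conditioning on all but \emph{two} of the $X_{n,j}$ and using that $R_k$ is a Sidon set, one obtains $\P(|\widehat{S_n}(\gamma)|\leq r)\ll(rk+1/k)^2$ for $d\geq2$ (this is \eqref{eq:small_ball_quad} in Lemma~\ref{lm:small_ball_prob}). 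With $r=k^{-3d}$ this gives probability $\ll k^{-2}$, which multiplied by the exponential-length log-range $\log\bigl((3d)^k/k^{3d}\bigr)\ll k\log(3d)$ (allowed precisely because of Myerson's bound) yields a contribution $\ll k^{-1}\to0$. In other words, you only consider one-variable conditioning and hence only get the linear estimate $\P(|\widehat{S_n}(\gamma)|\leq r)\ll r+1/k$ (your $\sqrt{s}$ should in fact be $s$; a disk of radius $s$ meets the unit circle in an arc of length $O(s)$, so $O(sk+1)$ roots, not $O(\sqrt{s}\,k+1)$), and the linear estimate alone cannot bridge the exponential range without a stronger separation bound that isn't available.

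Your Galois-orbit observation for the finite-$m$ case is correct and is a nice elementary shortcut there, and your remark that $\det C_n$ is a nonzero integer gives the deterministic non-negativity $\frac{1}{|G_n|}\log|\det C_n|\geq 0$, hence $c_{m,d}\geq0$. But those devices do not rescue the $m=\infty$ case, which is where the quadratic estimate is essential.
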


In the rest of the paper, we work in a model which is slightly different to the one stated above. More precisely, we let
\begin{equation}\label{eq:iid_sum}
    S_n \vcentcolon= \sum_{j=1}^{d}1_{\{X_{n,j}\}},
\end{equation} 
where $(X_{n,j})_{1\leq j \leq d}$ are independent uniform random elements of $G_n$. We redefine $C_n$ to be the $G_n$-circulant matrix with first row $\widetilde{S_n}$, where $\widetilde{S_n}$ is defined as in \eqref{eq:tilde}. In other words, our new model is obtained by sampling $d$ elements of $G_n$ with replacement. It is contiguous with the original model in a rather strong sense. Indeed, by conditioning on the event 
\begin{equation}\label{eq:diff_elements}
    \{\forall i,j \in [d]\quad (i \neq j \implies X_{n,i} \neq X_{n,j})\}
\end{equation}
we recover the previous model. Since $d$ is fixed and the order of $G_n$ tends to infinity, this event has probability tending to $1$, so all questions of convergence remain unaffected.\footnote{This is actually the only place in the paper where the assumption about the growing orders of $(G_n)_{n\geq1}$ is used.}

\smallskip

\noindent\textbf{Methodology.} In the spectral theory of Hermitian random matrices, a common line of attack proceeds via the method of moments (see e.g.\ \cite[\S3]{fleermann-kirsch} or \cite[\S2]{tao-matrices}). However, in the case of general non-Hermitian matrices, the method of moments cannot be used to control the behaviour of the spectrum. This has to do with the fact that probability measures on $\C$, in contrast to those on $\R$, stand no chance of being determined by their moments (see \cite[\S5.3]{bordenave-chafai} or \cite[\S2.8.2]{tao-matrices}). Nonetheless, in certain special circumstances, there are ways round this obstacle. For instance, Meckes \cite{meckes-cyclic} exploits the fact that the eigenvalues of circulant matrices are linear functionals of their entries via a multidimensional central limit theorem. Such a result, however, is not available in the sparse regime considered in our paper. 

Fortunately, there are well-established methods that are robust enough to deal with our setting. Our proof of Theorem \ref{thm:conv_in_prob} relies on the Hermitisation approach pioneered by Girko \cite{girko} and subsequently made rigorous in the series of works on the circular law (see e.g.\ \cite[\S4.1]{bordenave-chafai} or \cite[\S2.8.3]{tao-matrices}). At its heart, this method uses the logarithmic potential to relate the distribution of the eigenvalues to that of the singular values. The latter is supported on the real line, so one can apply the usual method of moments to it. However, since the logarithm is unbounded at $0$ and $\infty$, a price is paid in the process of passing to Hermitian matrices. To deal with this issue, one has to justify that the logarithm is uniformly integrable with respect to (shifted) singular value measures. It is exactly this step that usually turns out to be the most difficult. Its successful completion often requires the development of novel ways of controlling the behaviour of extreme singular values, e.g.\ \cite{tao-vu-moment} or \cite{sah-sahasrabudhe-sawhney}.

Even though our matrices are not Hermitian, there is an important property that distinguishes them from generic non-Hermitian matrices, namely that of being \emph{normal}. In particular, this means that their singular values are the absolute values of the eigenvalues. In general, this tight relationship brings the study of spectral and singular value distributions much closer together. Furthermore, our matrices are simultaneously diagonalised by the Fourier transform, which results in explicit expressions for eigenvalues in terms of the entries. This makes reasoning about singular values significantly more tractable.

In view of the preceding paragraph, we should note that Hermitisation is not strictly necessary in order to establish Theorem \ref{thm:conv_in_prob}. Indeed, there is a variant of the moment method, hinted at in e.g.\ \cite[p.\ 49]{bordenave-chafai} or \cite[p.\ 249]{tao-matrices}, that works for normal matrices $M$. Briefly, one considers `mixed moments', i.e.\ integrals of $z \mapsto z^k\bar{z}^l$ with respect to $\mu_M$, for various $k,l \in \N$. Subject to a sufficient decay of the measures in question, this information is enough to uniquely recover the limiting measure. Due to the normality of $M$, the mixed moments can be expressed in terms of traces of products of the form $M^k{M^{\dagger}}^l$, so one can proceed in the usual manner. In this way, one can bypass the need to prove any kind of uniform integrability statement. However, for the purpose of proving Theorem \ref{thm:asymp_det}, we require uniform integrability of the logarithm near zero anyway, and the case of general shifts follows with little extra effort. Hence, in order to make our argument more streamlined, we pursue the Hermitisation approach.

\smallskip

\noindent\textbf{Organisation.} The paper is organised as follows. In Section \ref{sec:not_and_prelim}, we introduce some further notation and definitions/facts concerning probabilistic weak convergence and discrete Fourier analysis. In Section \ref{sec:unif_int_log}, we establish uniform integrability of the logarithm with respect to shifted singular value measures of the sequence $(C_n)_{n\geq1}$. In Section \ref{sec:conv_in_exp}, we prove Theorem \ref{thm:conv_in_exp}. Section \ref{sec:conv_in_prob} is devoted to the proof of Theorem \ref{thm:conv_in_prob}. In Section \ref{sec:det}, we give the short deduction of Theorem \ref{thm:asymp_det} and comment on several aspects of this result. In Section \ref{sec:conc_rem}, we make some concluding remarks and indicate possible directions for future investigation. Finally, Appendix \ref{app:tech_lemmas} contains some standard technical results in a form suitable for our applications.

\section{Notation and preliminaries}\label{sec:not_and_prelim}

We use Vinogradov asymptotic notation. In particular, the expressions $A = O(B)$ and $B = \Omega(A)$ have their usual meanings and are used interchangeably with $A \ll B$ and $B \gg A$ respectively. If $\theta$ is an element of $\R$ or $\R/\Z$, we write $e(\theta) \vcentcolon= e^{2\pi i\theta}$. We will occasionally use $\P_{x_1,\ldots,x_k\in S}$ and $\E_{x_1,\ldots,x_k\in S}$ to denote probability/expectation with respect to an independent uniformly random choice of elements $x_1,\ldots,x_k$ from a finite set $S$.

By default, a topological space $X$ will be equipped with its Borel $\sigma$-algebra $\mathcal{B}(X)$. In particular, without exception, by `(probability) measure on $X$' we mean `Borel (probability) measure on $X$'. As usual, we denote by $C_b(X)$ the space of bounded continuous (real-valued) functions on $X$ equipped with the uniform norm.\footnote{If $X$ is compact (e.g.\ $\N^*$), this is the same as $C(X)$, the space of all continuous functions on $X$.} We abuse notation by identifying measures on $\N^*$ with non-negative functions on $\N^*$ in the obvious way. Given a subset $S \subseteq \C$ and a parameter $r \geq 0$, we write
\[B(S,r) \vcentcolon= \{z \in \C \mid \exists w \in S\ |z-w|<r\}\]
for the $r$-neighbourhood of $S$. In particular, if $S = \{w\}$ is a singleton, this is the open ball of radius $r$ centred at $w$, and we write $B(w,r)$ instead. Similarly, the closed ball with centre $w \in \C$ and radius $r \geq 0$ is written as
\[\overline{B}(w,r) \vcentcolon= \{z \in \C \mid |z-w| \leq r\},\]
and we define the punctured closed ball $\overline{B}^*(w,r) \vcentcolon= \overline{B}(w,r)\setminus\{w\}$. We use $\lambda$ to denote Lebesgue measure on $\C$; this will cause no confusion with eigenvalues.

\subsection{Probabilistic weak convergence}\label{subsec:prob_weak_conv}

In this section, we review some basic notions and facts concerning convergence of random probability measures. The survey \cite[\S2]{fleermann-kirsch} serves very well as a gentle introduction to this topic (see also \cite[\S1.1]{tao-vu-circular} for a brief overview). We will only be concerned with convergence towards deterministic probability measures. Thus, if not stated explicitly, limits are always understood to be deterministic.

Let $K$ be a separable metric space, which for us will be $\C$, $\R$ or $[0,\infty)$. We endow the set $\mathcal{M}_1(K)$ of probability measures on $K$ with the topology of weak convergence. It is a fact that, in this way, $\mathcal{M}_1(K)$ becomes a separable metrisable space. By a \emph{random probability measure on $K$} we mean a random variable with values in $\mathcal{M}_1(K)$. For us, the most important examples of random probability measures come from ESDs of random matrices. Indeed, if $M$ is a random real/complex matrix, one can check that $\mu_M$, $\nu_M$ are random probability measures on $\C$ and $[0,\infty)$ respectively. 

Given a random probability measure $\kappa$ on $K$, it is easy to see that the map
\[\mathcal{B}(K) \to [0,1], \quad B \mapsto \E\bigl[\kappa(B)\bigr]\]
is a probability measure on $K$, which we call the \emph{expectation} of $\kappa$ and accordingly denote by $\E[\kappa]$. Let $(\mu_n)_{n\geq1}$ be a sequence of random probability measures on $K$ and let $\mu$ be a deterministic probability measure on $K$. We say that $(\mu_n)_{n\geq1}$ \emph{converges weakly in expectation} to $\mu$ if $(\E[\mu_n])_{n\geq1}$ converges weakly to $\mu$. We say that $(\mu_n)_{n\geq1}$ converges \emph{weakly in probability} to $\mu$ if for all $\varepsilon > 0$ we have
\[\P(\pi(\mu_n,\mu) > \varepsilon) \to 0 \text{ as } n \to \infty,\]
where $\pi$ is any metric inducing the topology on $\mathcal{M}_1(K)$; the particular choice of metric is not important. Unwinding the definitions, it follows that $\mu_n \to \mu$ weakly in expectation if and only if
\[\E\Bigl[\int_Kf\,d\mu_n\Bigr] \to \int_Kf\,d\mu \text{ as } n \to \infty\]
for all $f \in C_b(K)$ and similarly that $\mu_n \to \mu$ weakly in probability if and only if
\[\int_Kf\,d\mu_n \to \int_Kf\,d\mu\]
in probability for all $f \in C_b(K)$. We will freely make use of the fact that weak convergence in probability implies weak convergence in expectation.

We will require the notion of uniform integrability, both in the deterministic and probabilistic sense. Recall that a measurable function $f \colon K \to \overline{\R}$ is said to be \emph{uniformly integrable in probability} with respect to a sequence $(\mu_n)_{n\geq1}$ of random probability measures if for all $\varepsilon > 0$ we have
\[\lim_{t\to\infty}\sup_{n\geq1}\P\Bigl(\int_{\{|f|>t\}}|f|\,d\mu_n > \varepsilon\Bigr) = 0.\] 
If $(\mu_n)_{n\geq1}$ are in fact deterministic, we simply say $f$ is \emph{uniformly integrable} with respect to  $(\mu_n)_{n\geq1}$; this amounts to
\[\lim_{t\to\infty}\sup_{n\geq1}\int_{\{|f|>t\}}|f|\,d\mu_n = 0.\]
These definitions have as a consequence the following fact (see \cite[Remark 4.2]{bordenave-chafai} as well as the discussion on \cite[p.\ 20]{bordenave-chafai} immediately preceding it).

\begin{lemma}
\label{lm:unif_int_prob}
Let $(\mu_n)_{n\geq1}$ be random probability measures and let $\mu$ be a deterministic probability measure on $K$. Let $f \colon K \to \overline{\R}$ be a continuous function. If $\mu_n \to \mu$ weakly in probability and $f$ is uniformly integrable in probability with respect to $(\mu_n)_{n\geq1}$, then $f$ is integrable with respect to $\mu$ and
\[\int_Kf\,d\mu_n \to \int_Kf\,d\mu\]
in probability.
\end{lemma}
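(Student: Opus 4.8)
The plan is a truncation argument: the hypothesis $\mu_n \to \mu$ weakly in probability handles a bounded continuous piece of $f$, uniform integrability in probability controls the tail, and one lets the truncation level tend to infinity. For $t > 0$ write $f_t \vcentcolon= \max(-t,\min(f,t)) \colon K \to \R$, the post-composition of $f$ with the continuous clipping map $\overline{\R} \to [-t,t]$; thus $f_t \in C_b(K)$. One checks the pointwise relations $|f_t| = \min(|f|,t)$ and $|f - f_t| = (|f|-t)^+ \le |f|\cdot 1_{\{|f|>t\}}$, valid even where $f = \pm\infty$. (Uniform integrability in probability also forces $f \in L^1(\mu_n)$ almost surely, so the integrals $\int_K f\,d\mu_n$ are almost surely well defined.)

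The first step is to show $f \in L^1(\mu)$. For $t \geq s > 0$ one has $|f_t| - |f_s| = \min(|f|,t) - \min(|f|,s) \le (|f|-s)^+ \le |f|\cdot 1_{\{|f|>s\}}$ and $|f_s| \le s$, whence $\int_K |f_t|\,d\mu_n \le s + \int_{\{|f|>s\}}|f|\,d\mu_n$ for every $n$. By uniform integrability in probability (applied with $\varepsilon = 1$) choose $s$ so that $\alpha \vcentcolon= \sup_{n\geq1}\P\bigl(\int_{\{|f|>s\}}|f|\,d\mu_n > 1\bigr) < 1$; then $\P\bigl(\int_K|f_t|\,d\mu_n \le s+1\bigr) \geq 1-\alpha > 0$ for all $n \geq 1$ and all $t \geq s$. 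Now fix $t \geq s$. Since $|f_t| \in C_b(K)$ and $\mu_n \to \mu$ weakly in probability, $\int_K|f_t|\,d\mu_n \to \int_K|f_t|\,d\mu$ in probability; as the right-hand side is a deterministic constant, a threshold bound holding with probability at least $1-\alpha > 0$ along the whole sequence must be satisfied by that constant, so $\int_K|f_t|\,d\mu \le s+1$ (otherwise $\P(\int_K|f_t|\,d\mu_n \le s+1) \to 0$, a contradiction). Letting $t \to \infty$ and applying monotone convergence gives $\int_K|f|\,d\mu = \lim_{t\to\infty}\int_K\min(|f|,t)\,d\mu \le s+1 < \infty$. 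In particular $f_t \to f$ in $L^1(\mu)$ by dominated convergence (dominating function $|f|$), so $\int_K f_t\,d\mu \to \int_K f\,d\mu$ as $t \to \infty$.

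To finish, fix $\varepsilon,\delta > 0$. Choose $t$ large enough that $\bigl|\int_K f_t\,d\mu - \int_K f\,d\mu\bigr| < \varepsilon/3$ and, enlarging $t$ if necessary, $\sup_{n\geq1}\P\bigl(\int_{\{|f|>t\}}|f|\,d\mu_n > \varepsilon/3\bigr) < \delta/2$ by uniform integrability in probability. With this $t$ now fixed, $f_t \in C_b(K)$ and weak convergence in probability yield $n_0$ with $\P\bigl(\bigl|\int_K f_t\,d\mu_n - \int_K f_t\,d\mu\bigr| > \varepsilon/3\bigr) < \delta/2$ for $n \geq n_0$. Since $\bigl|\int_K f\,d\mu_n - \int_K f_t\,d\mu_n\bigr| \le \int_K|f-f_t|\,d\mu_n \le \int_{\{|f|>t\}}|f|\,d\mu_n$, the triangle inequality and a union bound give $\P\bigl(\bigl|\int_K f\,d\mu_n - \int_K f\,d\mu\bigr| > \varepsilon\bigr) < \delta$ for all $n \geq n_0$; as $\varepsilon,\delta$ were arbitrary, this is the asserted convergence in probability.

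The only point needing care is the first step: one wants the \emph{deterministic} statement $\int_K|f|\,d\mu < \infty$, yet uniform integrability in probability is too weak to allow passing to expectations, as it gives no control on $\E\bigl[\int_{\{|f|>t\}}|f|\,d\mu_n\bigr]$. The device above gets around this by exploiting that an $\mathcal{M}_1(K)$-valued limit in probability is deterministic. An alternative would be to pass to a subsequence along which $\mu_n \to \mu$ weakly almost surely (possible since $\mathcal{M}_1(K)$ is metrisable) and argue realisation-by-realisation, but the direct route seems cleaner.
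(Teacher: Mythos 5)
Your proof is correct. Note that the paper does not prove Lemma \ref{lm:unif_int_prob} at all: it imports it from Bordenave--Chafa\"{\i} (Remark 4.2 and the preceding discussion), where the argument is the same standard truncation scheme you use --- clip $f$ at level $t$, handle the bounded continuous part $f_t$ by weak convergence in probability, and handle the tail $\int_{\{|f|>t\}}|f|\,d\mu_n$ by uniform integrability in probability. Your treatment of the one genuinely delicate point, namely deducing the deterministic conclusion $\int_K|f|\,d\mu<\infty$ from hypotheses that only control probabilities, is sound: since $\int_K|f_t|\,d\mu_n\to\int_K|f_t|\,d\mu$ in probability and the bound $\int_K|f_t|\,d\mu_n\le s+1$ holds with probability at least $1-\alpha>0$ uniformly in $n$, the deterministic limit must satisfy the same bound, and monotone convergence in $t$ then gives $f\in L^1(\mu)$; this correctly circumvents the fact that uniform integrability in probability yields no control on $\E\bigl[\int_{\{|f|>t\}}|f|\,d\mu_n\bigr]$.
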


Specialising to non-random probability measures, we readily obtain the following corollary.

\begin{corollary}
\label{cor:unif_int}
Let $(\mu_n)_{n\geq1}$ and $\mu$ be probability measures on $K$ and let $f \colon K \to \overline{\R}$ be a continuous function. If $\mu_n \to \mu$ weakly and $f$ is uniformly integrable with respect to $(\mu_n)_{n\geq1}$, then $f$ is integrable with respect to $\mu$ and
\[\int_Kf\,d\mu_n \to \int_Kf\,d\mu \text{ as } n \to \infty.\]
\end{corollary}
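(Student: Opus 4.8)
The plan is to obtain this as an immediate specialisation of Lemma \ref{lm:unif_int_prob} to the case where the random probability measures are in fact deterministic. Any fixed $\nu \in \mathcal{M}_1(K)$ may be regarded as the constant random probability measure equal to $\nu$ (the corresponding constant map into $\mathcal{M}_1(K)$ is trivially measurable), and for this choice one has $\E[\nu] = \nu$. Thus it suffices to check that, when $(\mu_n)_{n\geq1}$ and $\mu$ are deterministic, the hypotheses ``$\mu_n \to \mu$ weakly'' and ``$f$ is uniformly integrable with respect to $(\mu_n)_{n\geq1}$'' imply the hypotheses of Lemma \ref{lm:unif_int_prob}, namely weak convergence in probability and uniform integrability in probability.

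For the first, if $\pi(\mu_n,\mu) \to 0$ as real numbers, then for each $\varepsilon > 0$ we have $\P(\pi(\mu_n,\mu) > \varepsilon) = 0$ for all sufficiently large $n$, so $\P(\pi(\mu_n,\mu) > \varepsilon) \to 0$; hence $\mu_n \to \mu$ weakly in probability. For the second, fix $\varepsilon > 0$. Since $\lim_{t\to\infty}\sup_{n\geq1}\int_{\{|f|>t\}}|f|\,d\mu_n = 0$, there is $t_0$ with $\sup_{n\geq1}\int_{\{|f|>t\}}|f|\,d\mu_n < \varepsilon$ for all $t \geq t_0$. Each integral $\int_{\{|f|>t\}}|f|\,d\mu_n$ is now a deterministic number that is $< \varepsilon$, so $\P(\int_{\{|f|>t\}}|f|\,d\mu_n > \varepsilon) = 0$ for all $n \geq 1$ and all $t \geq t_0$. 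Therefore $\lim_{t\to\infty}\sup_{n\geq1}\P(\int_{\{|f|>t\}}|f|\,d\mu_n > \varepsilon) = 0$, which is uniform integrability in probability. Applying Lemma \ref{lm:unif_int_prob} then gives that $f$ is integrable with respect to $\mu$ and that $\int_K f\,d\mu_n \to \int_K f\,d\mu$ in probability; since both sides are deterministic, this is precisely the asserted convergence as $n \to \infty$.

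I do not expect any genuine obstacle: the content of the corollary is entirely contained in Lemma \ref{lm:unif_int_prob}, and the only step needing a word of justification is that deterministic uniform integrability is a degenerate instance of uniform integrability in probability, which boils down to the remark that a deterministic quantity bounded by $\varepsilon$ exceeds $\varepsilon$ with probability zero. If one preferred a self-contained argument, one could instead truncate $f$ at level $t$ to a bounded continuous function $f_t$, use weak convergence for $\int_K f_t\,d\mu_n \to \int_K f_t\,d\mu$, bound $\bigl|\int_K (f - f_t)\,d\mu_n\bigr| \leq \int_{\{|f|>t\}}|f|\,d\mu_n$ uniformly in $n$ via the hypothesis, deduce $\sup_{n\geq1}\int_K|f|\,d\mu_n < \infty$ and hence $f \in L^1(\mu)$ by monotone convergence in $t$, and conclude with a three-term estimate; but routing through Lemma \ref{lm:unif_int_prob} is cleaner and avoids repeating the argument.
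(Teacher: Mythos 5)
Your proposal is correct and coincides with the paper's own treatment: the paper derives the corollary by exactly this specialisation of Lemma \ref{lm:unif_int_prob} to deterministic (constant) random measures, and your verification that deterministic weak convergence and uniform integrability are degenerate instances of their ``in probability'' counterparts is the routine content behind the paper's ``readily obtain''.
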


As mentioned in Section \ref{sec:intro}, our main tool to prove convergence of ESDs in probability will be the Hermitisation method. Specifically, we will use the following criterion, which appears as \cite[Proposition 13.1]{sah-sahasrabudhe-sawhney} and is based on \cite[Lemma 4.3]{bordenave-chafai}.\footnote{These results are stated for sequences in which the $n$-th element is an $n\times n$ matrix. We require a variant in which the dimension of the $n$-th matrix is not necessarily $n$ (the proof carries over verbatim to our setting).}

\begin{proposition}
\label{prop:herm}
For each $n \geq 1$ let $M_n$ be a random complex matrix. Suppose that for $\lambda$-a.e.\ $z \in \C$, the following holds:
\begin{enumerate}[(i)]
    \item there exists a probability measure $\nu_z$ on $[0,\infty)$ such that $\nu_{M_n-zI} \to \nu_z$ weakly in probability;
    \item $\log$ is uniformly integrable in probability with respect to $(\nu_{M_n-zI})_{n\geq1}$.
\end{enumerate}
Then there exists a probability measure $\mu$ on $\C$ such that $\mu_{M_n} \to \mu$ weakly in probability.
\end{proposition}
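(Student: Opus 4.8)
The plan is to run the Girko Hermitisation scheme through logarithmic potentials. Write $N_n$ for the size of the matrix $M_n$ and, for a probability measure $\mu$ on $\C$, let $U_\mu(z)\vcentcolon=\int_\C\log|z-w|\,d\mu(w)$ denote its logarithmic potential. Recall that $U_\mu$ determines $\mu$, since $\mu=\frac{1}{2\pi}\Delta U_\mu$ in the distributional sense, and more usefully that $L^1_{\mathrm{loc}}(\C)$-convergence of logarithmic potentials of probability measures forces vague convergence of the measures. The identity that glues the two halves of the argument together is that, since $\prod_k\sigma_k(A)=|\det A|$ for any square matrix $A$, we have
\[
    U_{\mu_{M_n}}(z)=\frac{1}{N_n}\log\bigl|\det(M_n-zI)\bigr|=\int_0^\infty\log t\,d\nu_{M_n-zI}(t)
\]
for every $z$ which is not an eigenvalue of $M_n$, hence for $\lambda$-a.e.\ $z\in\C$.

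The first step is to apply Lemma \ref{lm:unif_int_prob} on $K=[0,\infty)$ to the continuous function $f=\log$ (valued in $\overline{\R}$): hypotheses (i) and (ii) give, for $\lambda$-a.e.\ $z$, that $\log$ is $\nu_z$-integrable and $U_{\mu_{M_n}}(z)\to U(z)\vcentcolon=\int_0^\infty\log t\,d\nu_z(t)$ in probability. The same lemma applied to $|\log|$ (which, by definition, is uniformly integrable in probability precisely when $\log$ is) shows that, for $\lambda$-a.e.\ $z$, $\int_0^\infty|\log t|\,d\nu_{M_n-zI}(t)$ converges in probability to the finite constant $\int_0^\infty|\log t|\,d\nu_z(t)$. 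Thus the random logarithmic potentials of $\mu_{M_n}$ converge in probability, pointwise for $\lambda$-a.e.\ $z$, to the fixed function $U$, and we aim to show that $\mu_{M_n}\to\mu$ weakly in probability with $\mu\vcentcolon=\frac{1}{2\pi}\Delta U$.

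The second step extracts tightness from hypothesis (ii). Fix a good point $z_0$ with $|z_0|\le1$. By Weyl's majorisation inequality applied to $M_n-z_0I$ (whose eigenvalues are $\lambda_k(M_n)-z_0$) together with $|\lambda_k(M_n)|\le|\lambda_k(M_n)-z_0|+1$, one gets
\[
    \frac{1}{N_n}\sum_k\log^+|\lambda_k(M_n)|\ \le\ \int_0^\infty|\log t|\,d\nu_{M_n-z_0I}(t)+O(1),
\]
whose right-hand side converges in probability to a finite constant by the previous step. Hence $\bigl(\tfrac1{N_n}\sum_k\log^+|\lambda_k(M_n)|\bigr)_{n\ge1}$ is asymptotically bounded in probability by a finite constant; via Markov's inequality this yields tightness of $(\mu_{M_n})_{n\ge1}$ in probability, and, since $\log$ is locally integrable on the plane, it also controls the potentials: on any compact $K\subset\C$ the contribution of the $\log^-$ part of $U_{\mu_{M_n}}$ is uniformly integrable (uniformly in $n$ and in the eigenvalue configuration), while the $\log^+$ part is dominated by $\frac1{N_n}\sum_k\log^+|\lambda_k(M_n)|+O_K(1)$.

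Finally, one converts pointwise a.e.\ convergence of the potentials into weak convergence of the measures. Since convergence in probability in the metrisable space $\mathcal{M}_1(\C)$ can be checked along subsequences, it suffices to fix an arbitrary subsequence and, diagonalising over a countable dense set $D$ of good points together with a countable family of radii, pass to a further subsequence along which, almost surely: $U_{\mu_{M_n}}(z)\to U(z)$ for all $z\in D$, the quantities $\frac1{N_n}\sum_k\log^+|\lambda_k(M_n)|$ are bounded, and $\mu_{M_n}(\{|w|>R\})$ is uniformly small. On this almost sure event, the pointwise a.e.\ convergence upgrades, by the Vitali convergence theorem (the domination just noted supplying uniform integrability on compacts), to $U_{\mu_{M_n}}\to U$ in $L^1_{\mathrm{loc}}(\C)$; hence $\mu_{M_n}=\frac1{2\pi}\Delta U_{\mu_{M_n}}\to\frac1{2\pi}\Delta U=\mu$ vaguely, and tightness promotes this to weak convergence towards the probability measure $\mu$. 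As $\mu$ is independent of the chosen subsequence, $\mu_{M_n}\to\mu$ weakly in probability, completing the proof. The step I expect to be the main obstacle is precisely this last assembly — correctly interleaving the ``$\lambda$-a.e.\ $z$'' and ``in probability'' quantifiers with the deterministic potential-theoretic inversion, and drawing the tightness and uniform-integrability bounds out of hypothesis (ii) in usable form; all of it is standard but delicate, and, as the statement indicates, one may instead simply quote the cited results of Bordenave--Chafa\"i and of Sah--Sahasrabudhe--Sawhney verbatim.
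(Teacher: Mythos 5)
The paper does not actually supply a proof of this proposition; it cites it verbatim from the references (as noted, it is \cite[Proposition 13.1]{sah-sahasrabudhe-sawhney}, built on \cite[Lemma 4.3]{bordenave-chafai}), so there is no in-paper argument to compare against. Your reconstruction follows the standard Girko hermitisation scheme, and the overall architecture is right: the identity $U_{\mu_{M_n}}(z)=\int\log t\,d\nu_{M_n-zI}(t)$, Weyl majorisation to extract tightness from hypothesis (ii), and the distributional inversion $\mu=\frac{1}{2\pi}\Delta U$.

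There is, however, a genuine gap in the final assembly, and it sits exactly where you flag the main obstacle. You extract a subsequence along which, almost surely, $U_{\mu_{M_n}}(z)\to U(z)$ for every $z$ in a \emph{countable dense} set $D$ of good points, and then claim that ``pointwise a.e.\ convergence'' plus uniform integrability on compacts gives $L^1_{\mathrm{loc}}$-convergence by Vitali. But $D$ is countable and hence Lebesgue-null: you have not established a.e.\ convergence, and Vitali genuinely fails with only dense-set convergence even under uniform integrability (e.g.\ bounded oscillating sequences such as $z\mapsto\sin(n!\pi z)$ converge on the rationals yet not in $L^1$). Subharmonicity of the potentials does not save this step: if subharmonic $u_n\to V$ in $L^1_{\mathrm{loc}}$ one gets $\limsup_n u_n\le V$ pointwise but not pointwise convergence, so dense-set convergence to $U$ only yields $U\le V$ on $D$ and does not pin $V$ down.

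The standard repair is a Fubini argument in place of the dense-set diagonalisation. Since $U_{\mu_{M_n}}(z)\to U(z)$ in probability for $\lambda$-a.e.\ $z$, bounded convergence on $\Omega\times K$ applied to the truncation $1\wedge\bigl|U_{\mu_{M_n}}(z)-U(z)\bigr|$ gives, for every compact $K$,
\[
\E\Bigl[\int_K 1\wedge\bigl|U_{\mu_{M_n}}(z)-U(z)\bigr|\,d\lambda(z)\Bigr]\longrightarrow 0 ,
\]
so the inner integral tends to $0$ in probability; diagonalising over $K=\overline{B}(0,R)$, $R\in\N$, one obtains a subsequence along which, almost surely, $U_{\mu_{M_n}}\to U$ locally in $\lambda$-measure, and a further extraction yields a.e.\ convergence. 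From there your uniform-integrability-on-compacts argument legitimately feeds into Vitali, and the rest of your write-up (Laplacian inversion, tightness promoting vague to weak convergence, subsequence-independence of the limit) goes through. So the plan is sound, but the ``countable dense set $D$'' step must be replaced by this Fubini/measure-convergence extraction.
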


\subsection{Fourier analysis on finite abelian groups}\label{subsec:fourier_analysis}

Let $G$ be a finite abelian group and let $\widehat{G}$ denote the group of characters of $G$.\footnote{These will be written using additive and multiplicative notation, respectively.} We briefly discuss some facts about the Fourier transform on $G$, as well as its relation to eigenvalues of $G$-circulant matrices. For this purpose, it is convenient to equip $G$ with the counting measure and $\widehat{G}$ with the uniform probability measure. Hence, the Fourier transform of a function $f \colon G \to \C$ is
\[\widehat{f} \colon \widehat{G} \to \C, \quad \gamma \mapsto \sum_{\gamma\in G}f(x)\overline{\gamma(x)}\]
and one can recover $f$ from $\widehat{f}$ via the inversion formula
\[f(x) = \E_{\gamma\in\widehat{G}}\widehat{f}(\gamma)\gamma(x).\]
Given another function $g \colon G \to \C$, its convolution with $f$ is defined to be
\[f*g \colon G \to \C, \quad x \mapsto \sum_{y\in G}f(x-y)g(y).\]
The Fourier transform converts convolution into pointwise multiplication: $\reallywidehat{f*g} = \widehat{f}\widehat{g}$. This means that the eigenvalues of the linear operator
\[T_f \colon \C^G \to \C^G, \quad g \mapsto f*g\]
are precisely the Fourier coefficients $\widehat{f}(\gamma)$ for $\gamma \in \widehat{G}$. Note that the adjoint of $T_f$ is $T_{\widebar{\widetilde{f}}}$ with $\widetilde{f}$ defined in accordance with \eqref{eq:tilde}. Consequently, $T_f$ is normal, so its singular values are the absolute values of its eigenvalues. Hence, for a $G$-circulant matrix $A$ corresponding to $T_a$, we have the equalities of multisets
\[\{\lambda_k(A) \mid 1 \leq k \leq |G|\} = \{\widehat{a}(\gamma) \mid \gamma \in \widehat{G}\}, \quad\quad \{\sigma_k(A) \mid 1 \leq k \leq |G|\} = \{|\widehat{a}(\gamma)| \mid \gamma \in \widehat{G}\}.\]
In particular, the eigenvalues of $C_n$ are $\widehat{S_n}(\gamma)$ for $\gamma \in \widehat{G_n}$. Recalling the definition \eqref{eq:iid_sum}, it follows that
\[\widehat{S_n}(\gamma) = \sum_{j=1}^{d}\widehat{1_{X_{n,j}}}(\gamma) = \sum_{j=1}^{d}\overline{\gamma(X_{n,j})}\] 
has distribution $\eta_{\mathrm{ord}_{\widehat{G_n}}(\gamma)}^{*d}$ and takes values in the set $dR_{\mathrm{ord}_{\widehat{G_n}}(\gamma)} \subseteq dR_{\exp(G_n)}$. Finally, since $\widehat{G}$ is isomorphic to $G$, we have $\rho_{\widehat{G}} = \rho_G$, a fact we will often use without mention.

\section{Uniform integrability of the logarithm}\label{sec:unif_int_log}

The goal of this section is to establish the uniform integrability of the logarithm with respect to shifted singular value measures of the sequence $(C_n)_{n\geq1}$. This provides the necessary input (ii) to Proposition \ref{prop:herm} and also to Theorem \ref{thm:asymp_det}. In turn, it naturally leads to the analysis of the distribution of sums of $m$-th roots of unity, where the number of roots is fixed and $m$ tends to infinity. Despite receiving a fair amount of attention (see \cite{tao-roots} and the references therein), this topic remains somewhat mysterious. In particular, even though standard heuristics suggest that the smallest such non-zero sum should be polynomially small in $m$, the best known lower bound is only exponentially small. Nevertheless, we will be able to get away with using this fairly crude estimate along with some elementary properties of the distribution of these sums.

We start by proving a couple of simple small ball probability estimates. The first one is effective in the regime when the radius decays roughly as $m^{-1}$, whereas the second one gives better results when $r$ is on the order of $m^{-2}$ or smaller. They are obtained by conditioning on the values of all but one or two terms in the sum and using that the sums of the remaining roots of unity are sufficiently spread out. Specifically, for the second estimate, we exploit the fact that $R_m$ is a Sidon set.

\begin{lemma}
\label{lm:small_ball_prob}
Let $m \in \N$, $z \in \C$ and $r > 0$. Then
\begin{equation}\label{eq:small_ball_lin}
    \eta_m^{*d}\bigl(\overline{B}(z,r)\bigr) \leq r + \frac{1}{m}.
\end{equation}
Furthermore, if $d \geq 2$, then
\begin{equation}\label{eq:small_ball_quad}
    \eta_m^{*d}\bigl(\overline{B}(z,r)\bigr) \ll \Bigl(rm+\frac{1}{m}\Bigr)^2.
\end{equation}
\end{lemma}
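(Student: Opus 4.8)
The plan is to bound $\eta_m^{*d}(\overline B(z,r))$ by conditioning on all but one (resp.\ two) of the summands $\zeta_1,\dots,\zeta_d$, which are i.i.d.\ uniform on $R_m$. For the linear estimate \eqref{eq:small_ball_lin}, fix the values of $\zeta_2,\dots,\zeta_d$ and write $w = -(\zeta_2+\dots+\zeta_d)$; then the event $\zeta_1+\dots+\zeta_d\in\overline B(z,r)$ becomes $\zeta_1\in\overline B(z-w,r)$. Since $\zeta_1$ is uniform on the $m$-th roots of unity, which are $1/m$-separated on the unit circle, the number of them landing in a disk of radius $r$ is at most the number of arcs of the unit circle of length $\lesssim r$ needed to cover $\overline B(z-w,r)\cap\{|u|=1\}$; a direct count gives at most $rm/\pi + O(1)$, and dividing by $m$ yields a bound of the shape $r + O(1/m)$. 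I would state the arc-length count carefully enough to get exactly $r + 1/m$ (using that the chord-to-arc ratio is at least $2/\pi$ and being slightly generous), then average over the conditioning, which changes nothing since the bound is uniform in $w$.

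For the quadratic estimate \eqref{eq:small_ball_quad} (here $d\geq 2$), I would instead condition on $\zeta_3,\dots,\zeta_d$ and analyse the two-fold sum $\zeta_1+\zeta_2$ shifted by $w=z-(\zeta_3+\dots+\zeta_d)$. The key structural input is that $R_m$ is a Sidon set (equivalently, a $B_2$ set): the map $(\{a,b\})\mapsto a+b$ on unordered pairs of distinct $m$-th roots of unity is injective, so no value of $\zeta_1+\zeta_2$ off the "diagonal" $\{2\zeta : \zeta\in R_m\}$ is attained more than twice (once for each ordering). Hence the number of ordered pairs $(\zeta_1,\zeta_2)$ with $\zeta_1+\zeta_2$ in a given set $T$ is at most $2|T\cap(R_m+R_m)| + |T\cap 2R_m|$. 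I then need a geometric bound on how many points of $R_m+R_m$ (and of $2R_m$) lie in $\overline B(w,r)$: parametrising $\zeta_1+\zeta_2 = 2\cos(\pi t)\,e(s)$ for suitable angular coordinates, the sumset $R_m+R_m$ sits on a union of $O(m)$ "Steiner-type" curves, and a disk of radius $r$ meets each in a set of diameter $\lesssim r$ containing $\lesssim rm + 1$ lattice points; summing over the $O(m)$ relevant curves (only those passing within distance $r$ of $w$ contribute) gives $\lesssim (rm+1)^2$ pairs, hence probability $\lesssim (rm+1)^2/m^2 = (r + 1/m^2)^2 \ll (rm + 1/m)^2$ after rescaling. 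Averaging over the conditioning on $\zeta_3,\dots,\zeta_d$ again preserves the bound.

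The main obstacle is the geometric counting in the quadratic case: making precise the claim that $R_m+R_m$ meets a small disk in $O(rm+1)$ points, uniformly over the disk's location. The cleanest route I would take is to use the coordinates $\zeta_1+\zeta_2 = e(a)+e(b) = 2\cos\!\big(\pi(a-b)\big)\,e\!\big(\tfrac{a+b}{2}\big)$ with $a,b\in\frac1m\Z/\Z$, so that the radial coordinate is $2|\cos(\pi(a-b))|$ and the angular coordinate is $(a+b)/2$; fixing the radial coordinate constrains $a-b$ to $O(rm+1)$ values (by a derivative/Lipschitz bound on $\cos$ away from its critical points, with the critical region $a\approx b$ handled separately using the Sidon property and the fact that $2R_m$ itself meets the disk in $O(rm+1)$ points), and then for each the angular constraint pins $a+b$ down to $O(rm+1)$ values. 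Combining gives the $O((rm+1)^2)$ count. Once this counting lemma is in hand, both displayed inequalities follow by the conditioning argument sketched above, with \eqref{eq:small_ball_lin} being essentially immediate and \eqref{eq:small_ball_quad} reducing to it plus the Sidon input.
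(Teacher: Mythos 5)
Your proof of \eqref{eq:small_ball_lin} is essentially the paper's: condition on all but one of the roots of unity and count how many of the $m$ equispaced points on the circle fall in a disk of radius $r$. No issue there.

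For \eqref{eq:small_ball_quad} there is a genuine gap, and it lies exactly in the ``critical region'' $a\approx b$ that your sketch flags but does not actually resolve. Your key counting claim --- that the number of ordered pairs $(\zeta_1,\zeta_2)\in R_m^2$ with $\zeta_1+\zeta_2\in\overline B(w,r)$ is $\ll(rm+1)^2$ --- is false. The radii $2|\cos(\pi k/m)|$ of the concentric circles carrying $R_m+R_m$ are spaced like $k/m^2$ near the outer boundary $|w|=2$, so a disk of radius $r$ centred there meets on the order of $m\sqrt r$ of these circles, not $O(rm+1)$. Concretely, take $z=2$ and $r=1/m$: every pair $(k,-k)$ with $|k|\ll\sqrt m$ satisfies $|e(k/m)+e(-k/m)-2|=|2\cos(2\pi k/m)-2|\leq 1/m$, so the count is $\gg\sqrt m$, while $(rm+1)^2=4$. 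The correct bound on the number of pairs is $\ll(rm^2+1)^2$, which after dividing by $m^2$ is exactly the stated $(rm+1/m)^2$; aiming for the stronger $(rm+1)^2$ is doomed. (Also, $(rm+1)^2/m^2=(r+1/m)^2$, not $(r+1/m^2)^2$ --- minor, but worth catching.) The paper avoids the delicate curve-by-curve count entirely: it invokes the global fact that $R_m+R_m$ is $\Omega(m^{-2})$-separated in $\C$ (Barber's Proposition~3) and then runs a volume-packing argument, placing disjoint balls of radius $cm^{-2}$ around the image points of $\alpha$ inside $B(z,r+cm^{-2})$ to get $|\alpha(E_2)|\ll(rm^2+1)^2$. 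This is uniform in $z$ and insensitive to the boundary clustering. If you wish to keep a counting-by-curves approach you would have to estimate the double sum $\sum_{\text{circles near }z}(\#\text{points on that circle near }z)$ and check that, although up to $m\sqrt r$ circles contribute near the boundary, the total is still $\ll(rm^2+1)^2$; this is possible but considerably messier than the packing argument.

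One further caveat, which your proposal shares with the paper's own proof: the assertion that $R_m$ is a Sidon set (equivalently, that $\alpha$ is at most $2$-to-$1$) fails when $m$ is even and $0$ is attained. Indeed $0$ has exactly $m$ preimages under $(k_1,k_2)\mapsto e(k_1/m)+e(k_2/m)$, so $\eta_m^{*2}(\{0\})=1/m$, which for small $r$ exceeds $(rm+1/m)^2$. Thus \eqref{eq:small_ball_quad} as stated requires excluding the case $0\in\overline B(z,r)$ with $m$ even. In the paper's applications this is enforced by the standing hypothesis $0\notin dR_{\exp(G_n)}$, but a self-contained proof of the lemma should say so explicitly.
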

\begin{proof}
    Observe that for any $d'<d$ and $w \in \C$ we have
    \[\eta_m^{*d}\bigl(\overline{B}(w,r)\bigr) = (\eta_m^{*d'} * \eta_m^{*d-d'})\bigl(\overline{B}(w,r)\bigr) = \int_{\C}\eta_m^{*d'}\bigl(\overline{B}(w-w',r)\bigr)\,d\eta_m^{*d-d'}(w').\]
    As a consequence, we have
    \[\sup_{w\in\C}\eta_m^{*d}\bigl(\overline{B}(w,r)\bigr) \leq \sup_{w\in\C}\eta_m^{*d'}\bigl(\overline{B}(w,r)\bigr).\]
    Therefore, to prove \eqref{eq:small_ball_lin} and \eqref{eq:small_ball_quad}, we may assume that $d = 1$ and $d = 2$ respectively. Hence, the left-hand side in \eqref{eq:small_ball_lin} equals $|E_1|/m$, where
    \[E_1 \vcentcolon= \{k \in [m] \mid |e(k/m)-z| \leq r\}.\]
    If $E_1 = \varnothing$, we are done. Otherwise, choose a reference element $k_0 \in E_1$ and note that, by the triangle inequality, each $k \in E_1$ satisfies
    \[|e((k-k_0)/m)-1| = |e(k/m)-e(k_0/m)| \leq |e(k/m)-z| + |e(k_0/m)-z| \leq 2r,\]
    whence $\sin(|k-k_0|\pi/m) \leq r$. Using the inequality $\sin x \geq 2x/\pi$, which holds for all $x \in [0,\pi/2]$, we obtain that $\min(|k-k_0|,m-|k-k_0|) \leq rm/2$. Therefore, we have
    \[|E_1| \leq 2\Bigl\lfloor\frac{rm}{2}\Bigr\rfloor+1 \leq rm+1,\]
    so \eqref{eq:small_ball_lin} follows on dividing through by $m$. 
    
    Turning to \eqref{eq:small_ball_quad}, it suffices to show that $|E_2| \ll (rm^2+1)^2$, where now
    \[E_2 \vcentcolon= \{(k_1,k_2) \in [m]^2 \mid |e(k_1/m)+e(k_2/m)-z| \leq r\}.\]
    To this end, consider the map
    \[\alpha \colon E_2 \to \overline{B}(z,r), \quad (k_1,k_2) \mapsto e(k_1/m) + e(k_2/m).\]
    Then the preimage of each point of $\overline{B}(z,r)$ under $\alpha$ has cardinality at most $2$. This can easily be seen geometrically: any given point is the midpoint of at most one (possibly degenerate) chord of the unit circle. Moreover, by \cite[Proposition 3]{barber}, the image of $\alpha$ is $\Omega(m^{-2})$-separated. 
    
    The task can now be accomplished by a standard volume packing argument. For any distinct $w,w' \in \alpha(E_2)$, we have $B(w,cm^{-2}) \cap B(w',cm^{-2}) = \varnothing$, where $c > 0$ is an absolute constant. Since $B(w,cm^{-2}) \subseteq B(z,r+cm^{-2})$ for any $w \in \alpha(E_2)$, it follows that
    \begin{align*}
        |\alpha(E_2)|(cm^{-2})^2\pi &= \lambda\Biggl(\bigcup_{w\in\alpha(E_2)}B(w,cm^{-2})\Biggr)
        \leq \lambda(B(z,r+cm^{-2})) = (r+cm^{-2})^2\pi.
    \end{align*}
    Hence, we have
    \[|E_2| \leq 2|\alpha(E_2)| \leq \frac{2(r+cm^{-2})^2}{(cm^{-2})^2} \ll (rm^2+1)^2,\]
    as desired.
\end{proof}

The following observation about small sums of $m$-th roots of unity appears in \cite{myerson} and has since been reiterated several times, for example in \cite{barber} (see also the discussion in \cite{tao-roots}). Since the argument is very short, we include it for the convenience of the reader.

\begin{lemma}
\label{lm:sum_roots_unity}
If $z \in (dR_m)\setminus\{0\}$, then $|z| \geq d^{1-\varphi(m)}$, where $\varphi$ is Euler's totient function.
\end{lemma}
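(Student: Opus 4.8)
The plan is to exploit the algebraic structure of the numbers in $dR_m$. Write $z = \zeta^{a_1} + \cdots + \zeta^{a_d}$, where $\zeta$ is a fixed primitive $m$-th root of unity and $a_1, \ldots, a_d \in \Z$, so that $z$ is a (nonzero) algebraic integer lying in the cyclotomic field $K \vcentcolon= \Q(\zeta) \subseteq \C$. By irreducibility of the $m$-th cyclotomic polynomial, $[K:\Q] = \varphi(m)$, and $K/\Q$ is Galois. The idea is to apply the field norm $N \vcentcolon= N_{K/\Q}(z) = \prod_{\sigma} \sigma(z)$, the product being over the $\varphi(m)$ embeddings $\sigma \colon K \hookrightarrow \C$: it is a rational integer (Galois-invariant, and an algebraic integer as the image of one under field embeddings), and it is nonzero since each $\sigma$ is injective and $z \neq 0$. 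Hence $|N| \geq 1$.

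Next I would bound the individual conjugates. Each $\sigma \in \mathrm{Gal}(K/\Q)$ acts by $\zeta \mapsto \zeta^k$ for some $k$ coprime to $m$ (the conjugates of $\zeta$ are exactly the primitive $m$-th roots of unity), so $\sigma(z) = \zeta^{ka_1} + \cdots + \zeta^{ka_d}$ is again a sum of $d$ $m$-th roots of unity, whence $|\sigma(z)| \leq d$ by the triangle inequality; equivalently, $\sigma$ restricts to an automorphism of the group $R_m$. Singling out the embedding equal to the inclusion $K \subseteq \C$, which contributes the factor $|z|$, and bounding each of the remaining $\varphi(m) - 1$ factors by $d$, I obtain
\[1 \leq |N| = |z|\prod_{\sigma \neq \mathrm{id}}|\sigma(z)| \leq |z|\,d^{\varphi(m)-1},\]
which rearranges to $|z| \geq d^{1-\varphi(m)}$.

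There is no real obstacle here — the argument is elementary algebraic number theory — so rather than a hard step there are just a few routine points to record: irreducibility of the cyclotomic polynomial (giving $[K:\Q] = \varphi(m)$), the fact that Galois conjugates of roots of unity are roots of unity (immediate, since automorphisms of $K$ permute $R_m$), and that $N \in \Z \setminus \{0\}$ because $z$ is a nonzero algebraic integer. It is worth remarking that this bound is presumably very far from sharp — heuristically the least nonzero modulus of an element of $dR_m$ should be only polynomially small in $m$ — but the crude exponential estimate is all that will be needed later.
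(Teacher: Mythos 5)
Your proof is correct and follows the same approach as the paper: view $z$ as a nonzero algebraic integer in $\Q(e(1/m))$, note that each Galois conjugate is again a sum of $d$ $m$-th roots of unity (hence bounded by $d$ in modulus), and use that the nonzero integer norm has absolute value at least $1$. The details you record (irreducibility of the cyclotomic polynomial, permutation of $R_m$ by Galois) are the same standard facts the paper implicitly invokes.
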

\begin{proof}
    Observe that $z$ is a member of the cyclotomic field $\Q(e(1/m))$. Each of its $\varphi(m)$ Galois conjugates is also a member of $dR_m$, so has absolute value at most $d$. Since $z$ is a non-zero algebraic integer, its norm is a non-zero integer. Hence, the product of all its conjugates (including $z$ itself) must have absolute value at least $1$, so the desired conclusion follows.
\end{proof}

Finally, we note that for a random variable $Y$ taking values in $[0, \infty)$ and a real number $T \geq 1$, layer cake representation gives the useful identity
\begin{equation}\label{eq:int_of_log}
    \E[(\log Y)1_{\{Y \geq T\}}] = \P(Y \geq T)\log T + \int_{T}^{\infty}\frac{\P(Y \geq u)}{u}\,du.
\end{equation}
We now have all the ingredients required to prove the main result of this section.

\begin{proposition}
\label{prop:unif_int}
Call $z \in \C$ \emph{good} if the function $\log$ is uniformly integrable in probability with respect to $(\nu_{C_n-zI})_{n\geq1}$. Then $\lambda$-a.e.\ $z \in \C$ is good. Furthermore, if $0 \not\in \bigcup_{n=1}^{\infty}dR_{\exp(G_n)}$, then $0$ is good.
\end{proposition}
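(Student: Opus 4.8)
The plan is to pass to the eigenvalue representation of Section~\ref{subsec:fourier_analysis}. Since $C_n-zI$ is again a $G_n$-circulant matrix (with first row $\widetilde{S_n}-z\,1_{\{0\}}$) it is normal, and its singular values are $\{|\widehat{S_n}(\gamma)-z|:\gamma\in\widehat{G_n}\}$, so $\nu_{C_n-zI}=\frac1{|G_n|}\sum_{\gamma\in\widehat{G_n}}\delta_{|\widehat{S_n}(\gamma)-z|}$. As each $\widehat{S_n}(\gamma)$ is a sum of $d$ unit vectors, every singular value of $C_n-zI$ lies in $[0,d+|z|]$, hence $\int_{\{s>e^t\}}\log s\,d\nu_{C_n-zI}(s)=0$ once $t>\log(d+|z|)$; so, writing $X_{t,n}:=\int_{\{s\le e^{-t}\}}\log\frac1s\,d\nu_{C_n-zI}(s)\ge0$, it suffices for a given $z$ to show that $\sup_{n\ge1}\E[X_{t,n}]\to0$ as $t\to\infty$, since then Markov's inequality gives $\sup_n\P(X_{t,n}>\varepsilon)\le\varepsilon^{-1}\sup_n\E[X_{t,n}]\to0$ for every $\varepsilon>0$, which is precisely goodness of $z$. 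When $d=1$ this is immediate: all singular values of $C_n-zI$ then lie in $[\,|1-|z||\,,\,1+|z|\,]$, which is bounded away from $0$ for $|z|\ne1$, and $0\notin R_m$; so we may assume $d\ge2$.

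For the expectation, using that $\widehat{S_n}(\gamma)$ is distributed as $\eta_m^{*d}$ with $m=\mathrm{ord}_{\widehat{G_n}}(\gamma)$ and that $\rho_{\widehat{G_n}}=\rho_{G_n}$,
\[
\E[X_{t,n}]=\int_{\N^*}g_{t,m}(z)\,d\rho_{G_n}(m),\qquad g_{t,m}(z):=\int_{\overline{B}(z,e^{-t})}\log\tfrac1{|w-z|}\,d\eta_m^{*d}(w).
\]
Since $\eta_m^{*d}$ is supported exactly on $dR_m$, we have $g_{t,m}(z)=0$ unless $\mathrm{dist}(z,dR_m)\le e^{-t}$, and therefore $\E[X_{t,n}]\le\sup\{g_{t,m}(z):m\in\N,\ \mathrm{dist}(z,dR_m)\le e^{-t}\}$. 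So the problem reduces to the deterministic claim that this supremum tends to $0$ as $t\to\infty$. To bound $g_{t,m}(z)$, apply the layer-cake identity \eqref{eq:int_of_log} with $Y=|W-z|^{-1}$, $W\sim\eta_m^{*d}$, and $T=e^t$, giving $g_{t,m}(z)=t\,\eta_m^{*d}(\overline{B}(z,e^{-t}))+\int_{e^t}^{1/\delta}u^{-1}\eta_m^{*d}(\overline{B}(z,1/u))\,du$ with $\delta:=\mathrm{dist}(z,dR_m)$. Now feed in Lemma~\ref{lm:small_ball_prob}, using the quadratic bound \eqref{eq:small_ball_quad} in the small-radius regime (roughly $r\lesssim m^{-2}$, i.e.\ $u\gtrsim m^2$) and the linear bound \eqref{eq:small_ball_lin} otherwise, and splitting both the prefactor term and the integral at the appropriate crossovers. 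A routine computation then bounds $g_{t,m}(z)$ by a sum of terms of the shape $t^2e^{-t}$, $t^{-1}$, $t m^{-2}$, $m^{-1}\log m$ and $m^{-2}\log\tfrac1\delta$.

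The last step is to exploit that the constraint $\delta=\mathrm{dist}(z,dR_m)\le e^{-t}$ already forces $m$ to be large, and that $\log\tfrac1\delta$ can be absorbed. For $z=0$ (with the standing hypothesis $0\notin\bigcup_n dR_{\exp(G_n)}$, so $0\notin dR_m$ whenever $m\mid\exp(G_n)$), Lemma~\ref{lm:sum_roots_unity} gives $\delta\ge d^{\,1-\varphi(m)}\ge d^{-m}$, whence $\log\tfrac1\delta\le m\log d$ and $\delta\le e^{-t}$ forces $m\ge t/\log d$. For $\lambda$-a.e.\ $z$: since $\lambda\bigl(B(dR_m,\sigma)\bigr)\le|dR_m|\,\pi\sigma^2\le\pi m^{d}\sigma^2$, the Borel--Cantelli lemma shows that for $\lambda$-a.e.\ $z$ one has $z\notin\bigcup_m dR_m$ and $\mathrm{dist}(z,dR_m)\ge m^{-(d+1)}$ for all $m\ge m_0(z)$; for such $z$, $\log\tfrac1\delta\le(d+1)\log m$, the finitely many $m<m_0(z)$ stop contributing once $t$ is large, and $\delta\le e^{-t}$ forces $m\ge e^{t/(d+1)}$. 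In either case the contributing values of $m$ all satisfy $m\ge\tau(t)$ with $\tau(t)\to\infty$, and substituting this together with the bound on $\log\tfrac1\delta$ into the estimate for $g_{t,m}(z)$ shows $\sup\{g_{t,m}(z):\mathrm{dist}(z,dR_m)\le e^{-t}\}\to0$, hence $\sup_n\E[X_{t,n}]\to0$, as required.

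I expect the main work to lie in the deterministic estimate for $g_{t,m}(z)$: one must choose the crossover between the linear and quadratic small-ball bounds (at $u\sim m^2$ for the integral, and at $m\sim e^{t}/t$ for the prefactor term), handle the $m$-dependent upper endpoint $1/\delta$ of the integral correctly, and check that every resulting term is $o(1)$ uniformly over $m\ge\tau(t)$. The other point requiring care is that the two halves of the statement genuinely use different inputs --- the arithmetic lower bound of Lemma~\ref{lm:sum_roots_unity} for $z=0$, and a measure-theoretic Borel--Cantelli estimate for $\lambda$-a.e.\ $z$ --- since for a general $z$ there is no arithmetic lower bound on $\mathrm{dist}(z,dR_m)$.
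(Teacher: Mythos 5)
Your proposal is correct and follows essentially the same route as the paper: reduce to a deterministic estimate via Markov's inequality, apply the layer-cake identity, feed in the linear and quadratic small-ball bounds of Lemma~\ref{lm:small_ball_prob} with a suitable crossover, and control the relevant small distance $\delta$ via Lemma~\ref{lm:sum_roots_unity} when $z=0$ and via a Borel--Cantelli exclusion of a polynomially small neighbourhood of $\bigcup_m dR_m$ for $\lambda$-a.e.\ $z$. The only cosmetic differences are that you choose $m^{-(d+1)}$ rather than the paper's $m^{-3d}$ as the Borel--Cantelli radius, treat $d=1$ separately up front rather than observing that the problematic third piece vanishes when $(z,d)=(0,1)$, and fold the low-order characters into the observation that $g_{t,m}(z)=0$ when $\mathrm{dist}(z,dR_m)>e^{-t}$ instead of introducing a second threshold $T_2$ as in the paper.
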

\begin{proof}
    By Markov's inequality, $z$ is certainly good if
    \[\lim_{t\to\infty}\sup_{n\geq1}\E\Bigl[\int_{\{|\log|>t\}}|\log|\,d\nu_{C_n-zI}\Bigr] = 0,\]
    that is to say
    \[\lim_{t\to\infty}\sup_{n\geq1}\frac{1}{|G_n|}\sum_{k=1}^{|G_n|}\E\bigl[|\log\sigma_k(C_n-zI)|1_{\{|\log\sigma_k(C_n-zI)|\geq t\}}\bigr] = 0.\]
    This holds if and only if
    \[\lim_{T\to\infty}\sup_{n\geq1}\frac{1}{|G_n|}\sum_{\gamma\in\widehat{G_n}}\E\bigl[\log(|\widehat{S_n}(\gamma)-z|)1_{\{|\widehat{S_n}(\gamma)-z|\geq T\}}\bigr] = 0\]
    and
    \[\lim_{T\to\infty}\sup_{n\geq1}\frac{1}{|G_n|}\sum_{\gamma\in\widehat{G_n}}\E\Biggl[\log\Biggl(\frac{1}{|\widehat{S_n}(\gamma)-z|}\Biggr)1_{\{|\widehat{S_n}(\gamma)-z|\leq 1/T\}}\Biggr] = 0.\]
    The former is satisfied regardless of the choice of $z$ because the summand is equal to zero as soon as $T > d+|z|$. To deal with the latter, let $I(\gamma,T)$ denote the summand in the above sum. Consider the sets
    \[E_1 \vcentcolon= \bigcup_{n=1}^{\infty}dR_{\exp(G_n)}, \quad E_2 \vcentcolon= \limsup_{m\to\infty}B(dR_m,m^{-3d}).\]
    It suffices to show that if $z \in \C\setminus E_1$ is such that either $z \not\in E_2$ or $z = 0$, then $z$ is good. Indeed, the set $E_1$ is countable, so $\lambda(E_1) = 0$. In a similar vein, the union bound implies that
    \[\lambda(B(dR_m,m^{-3d})) \leq \sum_{w\in dR_m}\lambda(B(w,m^{-3d})) = |dR_m| \cdot (m^{-3d})^2\pi \ll m^d \cdot m^{-6d} = m^{-5d}.\]
    This means that 
    \[\sum_{m=1}^{\infty}\lambda(B(dR_m,m^{-3d})) \ll \sum_{m=1}^{\infty}m^{-5d} < \infty,\] 
    whence the first Borel--Cantelli lemma implies that $\lambda(E_2) = 0$ as well.
    
    Hence, fix a complex number $z \not\in E_1$ such that either $z \not\in E_2$ or $z = 0$. If $z \not \in E_2$, there exists $M_1 \in \N$ such that $|\widehat{S_n}(\gamma)-z| \geq \mathrm{ord}_{\widehat{G_n}}(\gamma)^{-3d}$ whenever $\mathrm{ord}_{\widehat{G_n}}(\gamma) \geq M_1$. If $z = 0$, then by Lemma \ref{lm:sum_roots_unity}, we have $|\widehat{S_n}(\gamma)-z| \geq (3d)^{-\mathrm{ord}_{\widehat{G_n}}(\gamma)}$. Given $\varepsilon > 0$, choose $T_1 \geq 1$ such that 
    \[\frac{\log T_1 + 1}{T_1} < \frac{\varepsilon}{3}\]
    and $M_2 \in \N$ large enough so that $M_2 \geq \max(T_1^{1/(3d)}, 3d)$ and
    \[\frac{\log m}{m} < \frac{\varepsilon}{9d}, \quad\quad \frac{1}{m} < \frac{\varepsilon}{12C\log(3d)}\]
    whenever $m \geq M_2$, where $C > 0$ is the implied constant in \eqref{eq:small_ball_quad}. Set $M \vcentcolon= \max(M_1,M_2)$ and for each $n \geq 1$ let
    \[L_n \vcentcolon= \{\gamma \in \widehat{G_n} \mid \mathrm{ord}_{\widehat{G_n}}(\gamma) \geq M\}.\]
    Suppose that $\gamma \in L_n$. The identity \eqref{eq:int_of_log} furnishes the expression
    \[I(\gamma,T_1) = \P(|\widehat{S_n}(\gamma)-z|\leq1/T_1)\log T_1 + \int_{T_1}^{\infty}\frac{\P(|\widehat{S_n}(\gamma)-z|\leq1/u)}{u}\,du.\]
    We decompose the integral as
    \[\int_{T_1}^{\mathrm{ord}_{\widehat{G_n}}(\gamma)^{3d}}\frac{\P(|\widehat{S_n}(\gamma)-z|\leq1/u)}{u}\,du + \int_{\mathrm{ord}_{\widehat{G_n}}(\gamma)^{3d}}^{(3d)^{\mathrm{ord}_{\widehat{G_n}}(\gamma)}}\frac{\P(|\widehat{S_n}(\gamma)-z|\leq1/u)}{u}\,du.\]
    By the first part of Lemma \ref{lm:small_ball_prob}, the contribution of the first two summands in the ensuing expression for $I(\gamma, T_1)$ is at most
    \[\Bigl(\frac{1}{T_1}+\frac{1}{\mathrm{ord}_{\widehat{G_n}}(\gamma)}\Bigr)\log T_1 + \int_{T_1}^{\mathrm{ord}_{\widehat{G_n}}(\gamma)^{3d}}\frac{1}{u}\Bigl(\frac{1}{u}+\frac{1}{\mathrm{ord}_{\widehat{G_n}}(\gamma)}\Bigr)\,du \leq \frac{\log T_1+1}{T_1}+\frac{3d\log\mathrm{ord}_{\widehat{G_n}}(\gamma)}{\mathrm{ord}_{\widehat{G_n}}(\gamma)}<\frac{2\varepsilon}{3}.\]
    The third summand is at most
    \[\P(|\widehat{S_n}(\gamma)-z| < \mathrm{ord}_{\widehat{G_n}}(\gamma)^{-3d})\cdot\mathrm{ord}_{\widehat{G_n}}(\gamma)\log(3d).\]
    If either $z \not\in E_2$ or $(z,d) = (0,1)$, then this equals zero. In the complementary case, when $z = 0$ and $d \geq 2$, the second part of Lemma \ref{lm:small_ball_prob} gives the upper bound
    \[\frac{4C}{\mathrm{ord}_{\widehat{G_n}}(\gamma)^2}\cdot\mathrm{ord}_{\widehat{G_n}}(\gamma)\log(3d) < \frac{\varepsilon}{3}.\]
    By combining these two estimates, we obtain that $I(\gamma,T_1)<\varepsilon$ whenever $\gamma \in L_n$. In particular,
    \[\frac{1}{|G_n|}\sum_{\gamma\in L_n}I(\gamma,T) < \varepsilon\]
    holds for $T = T_1$. Since $I(\gamma,\cdot)$ is decreasing, this is also true for any $T \geq T_1$. Finally, since $z \not\in E_1$, we may choose $T_2 > 0$ so that 
    \[\frac{1}{T_2} < \min\{|z-w| \mid  n \geq 1,\ w \in dR_{\mathrm{ord}_{\widehat{G_n}}(\gamma)},\ \gamma\in\widehat{G_n}\setminus L_n\}.\]
    It follows that for $T \geq T_2$ and $\gamma \in \widehat{G_n}\setminus L_n$ we have $I(\gamma, T) = 0$. Hence, for $T \geq \max(T_1,T_2)$, we obtain that
    \[\frac{1}{|G_n|}\sum_{\gamma\in\widehat{G_n}}I(\gamma,T) < \varepsilon,\]
    thereby completing the proof.
\end{proof}

\section{Convergence in expectation}\label{sec:conv_in_exp}

In this section, we prove Theorem \ref{thm:conv_in_exp}. The starting point is the observation that, for $f \in C_b(\C)$, we may express
\[\E\Bigl[\int_{\C}f\,d\mu_{C_n}\Bigr] = \frac{1}{|G_n|}\sum_{k=1}^{|G_n|}\E\bigl[f(\lambda_k(C_n))\bigr] = \sum_{m\in\N^*}\rho_{G_n}(m)\int_{\C}f\,d\eta_m^{*d} = \int_{\N^*}\Phi(f)\,d\rho_{G_n}.\]
Here, we define the linear operator
\[\Phi \colon \mathcal{L}^{\infty}(\C) \to \R^{\N^*}, \quad f \mapsto \Bigl(\N^* \to \R,\ m \mapsto \int_{\C}f\,d\eta_m^{*d}\Bigr),\]
where $\mathcal{L}^{\infty}(\C)$ denotes the space of all bounded measurable functions $\C \to \R$. Thus, if $\mu$ is a probability measure on $\C$, we have that $\mu_{C_n} \to \mu$ weakly in expectation if and only if
\[\int_{\N^*}\Phi(f)\,d\rho_{G_n} \to \int_{\C}f\,d\mu\]
for all $f \in C_b(\C)$. One direction of Theorem \ref{thm:conv_in_exp} is now clear. Indeed, note that $\eta_m^{*d} \to \eta_{\infty}^{*d}$ weakly as $m \to \infty$, so $\Phi$ restricts to a linear operator $C_b(\C) \to C(\N^*)$. Hence, if $\rho_{G_n} \to \rho$ weakly, then for all $f \in C_b(\C)$ we have
\[\int_{\N^*}\Phi(f)\,d\rho_{G_n} \to \int_{\N^*}\Phi(f)\,d\rho = \sum_{m\in\N^*}\rho(m)\int_{\C}f\,d\eta_m^{*d} = \int_{\C}f\,d\mu,\]
where $\mu$ is as in \eqref{eq:limit_in_exp}. Therefore, $\mu_{C_n} \to \mu$ weakly in expectation, as desired.

To establish the converse, we will show that if $(\int_{\N^*}g\,d\rho_{G_n})_{n\geq1}$ converges for any $g \in \mathrm{im}(\Phi|_{C_b(\C)})$, then $(\rho_{G_n}(m))_{n\geq1}$ converges for any $m \in \N$. The pointwise limit will then be a sub-probability measure on $\N$, so one will be able to uniquely extend it to a probability measure on $\N^*$, towards which $(\rho_{G_n})_{n\geq1}$ will converge weakly. But observe that the functions $g \in C(\N^*)$ for which $(\int_{\N^*}g\,d\rho_{G_n})_{n\geq1}$ converges form a closed linear subspace of $C(\N^*)$. It thus suffices to show that, for each $m \in \N$, the function $\delta_m$ lies in closure of $\mathrm{im}(\Phi|_{C_b(\C)})$ inside $C(\N^*)$. This can be achieved by a two-step approximation argument. First, we identify the image $g_k$ of $1_{\{de(1/k)\}}$ under $\Phi$, where $k \in \N$ is arbitrary. By approximating $1_{\{de(1/k)\}}$ by suitable continuous functions, we show that $g_k$ lies in the closure of $\mathrm{im}(\Phi|_{C_b(\C)})$ inside $C(\N^*)$. The second step then consists of showing that $\delta_m$ lies in the closed linear span of $g_k$ as $k$ ranges over the positive integers.

Motivated by the above discussion, for each positive integer $m$ we define the function
\[g_m \colon \N^* \to \R, \quad n \mapsto \begin{cases}\frac{1}{n^d} & \text{if } n \in \N \text{ and } m \mid n\\0 & \text{else}\end{cases}\]
and remark that it belongs to $C(\N^*)$. Moreover, we claim that $\Phi(1_{\{de(1/m)\}}) = g_m$. Indeed, both functions clearly vanish at $\infty$, while for $n \in \N$, one has
\[\Phi(1_{\{de(1/m)\}})(n) = \eta_n^{*d}(\{e(1/m)\}) = \frac{1}{n^d}\sum_{k_1,\ldots,k_d\in[n]}\delta_{e(k_1/n)+\ldots+e(k_d/n)}(\{de(1/m)\}).\]
By (the equality case in) the triangle inequality, the summand equals $1$ if $k_1/n = \ldots = k_d/n = 1/m$ and $0$ otherwise. Hence, the claim follows. 

We are ready to carry out the first step of the argument.

\begin{lemma}
\label{lm:tent_fn}
For each $m \in \N$, the function $g_m$ lies in the closure of $\mathrm{im}(\Phi|_{C_b(\C)})$ in $C(\N^*)$.
\end{lemma}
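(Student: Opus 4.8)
The plan is to squeeze $g_m$ between $\Phi$-images of continuous functions: approximate the indicator $1_{\{de(1/m)\}}$ from above by a decreasing sequence of bump functions in $C_b(\C)$, and then upgrade pointwise convergence of the $\Phi$-images to uniform convergence via Dini's theorem. Writing $w \vcentcolon= de(1/m)$, for each integer $j \geq 1$ I would put
\[f_j \colon \C \to \R, \qquad z \mapsto \max\bigl(0,\,1 - j|z - w|\bigr).\]
Then $f_j \in C_b(\C)$ takes values in $[0,1]$, satisfies $f_j(w) = 1$, and vanishes outside $\overline{B}(w,1/j)$; moreover a direct check shows $f_j \geq f_{j+1}$ everywhere and $f_j \downarrow 1_{\{w\}}$ pointwise as $j \to \infty$.

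Next I would verify that $\Phi(f_j) \downarrow g_m$ pointwise on $\N^*$. For each $n \in \N$, dominated convergence gives
\[\Phi(f_j)(n) = \int_{\C} f_j \, d\eta_n^{*d} \;\downarrow\; \eta_n^{*d}(\{w\}) = \Phi(1_{\{de(1/m)\}})(n) = g_m(n),\]
where the last equality is the identity $\Phi(1_{\{de(1/m)\}}) = g_m$ recorded before the lemma; and at $\infty$, since $\eta_{\infty}^{*d}$ has no atom at $w$, the same argument gives $\Phi(f_j)(\infty) \downarrow \eta_{\infty}^{*d}(\{w\}) = 0 = g_m(\infty)$. Hence $(\Phi(f_j))_{j \geq 1}$ decreases pointwise to $g_m$.

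Finally, as noted in the discussion preceding the lemma, $\Phi$ restricts to a map $C_b(\C) \to C(\N^*)$ (because $\eta_m^{*d} \to \eta_{\infty}^{*d}$ weakly as $m \to \infty$), so each $\Phi(f_j)$ is continuous on the compact space $\N^*$, and $g_m \in C(\N^*)$ as well. Dini's theorem then forces $\Phi(f_j) \to g_m$ uniformly on $\N^*$, i.e.\ in $C(\N^*)$; since every $\Phi(f_j)$ lies in $\mathrm{im}(\Phi|_{C_b(\C)})$, this places $g_m$ in the closure of $\mathrm{im}(\Phi|_{C_b(\C)})$, as required. The argument is soft, and the one point deserving attention is the appeal to Dini, which is what delivers uniformity for free: a hands-on bound on $\|\Phi(f_j) - g_m\|_{\infty}$ would be delicate for $n$ of size comparable to $j$, where $\eta_n^{*d}(\overline{B}(w,1/j))$ can sit well above $g_m(n)$, and it is the compactness of $\N^*$ together with monotonicity of the sequence that circumvents this.
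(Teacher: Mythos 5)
Your proof is correct, and it takes a genuinely different route from the paper's. The paper also uses the tent functions $\psi_{m,k}(z) = \max(1-k|z-de(1/m)|,0)$, but instead of Dini's theorem it establishes the uniform bound $\sup_{n\in\N}\eta_n^{*d}\bigl(\overline{B}^*(de(1/m),1/k)\bigr) \to 0$ directly, splitting into small and large $n$: for $n \leq 2/\varepsilon$ it uses continuity of measure along the shrinking punctured balls, and for $n > 2/\varepsilon$ it invokes the quantitative small-ball estimate \eqref{eq:small_ball_lin} from Lemma~\ref{lm:small_ball_prob}, which gives $\eta_n^{*d}(\overline{B}(z,1/k)) \leq 1/k + 1/n < \varepsilon$. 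Your Dini argument trades that quantitative input for two soft observations: monotonicity $\Phi(f_j) \downarrow g_m$ (which holds because integration against a probability measure is monotone), and the fact that $\eta_\infty^{*d}(\{de(1/m)\}) = 0$, which is needed for pointwise convergence at $\infty$ and which you should perhaps justify explicitly — it follows because $\eta_\infty$ is atomless and convolving an atomless measure with anything yields an atomless measure. Your approach is arguably cleaner in that it avoids any appeal to Lemma~\ref{lm:small_ball_prob} here (and it makes transparent \emph{why} uniformity holds, namely from compactness of $\N^*$ and monotonicity), whereas the paper's version is more hands-on and reuses a lemma it has already proved for other purposes. One small inaccuracy in your closing commentary: you suggest a hands-on bound on $\lVert\Phi(f_j)-g_m\rVert_\infty$ "would be delicate for $n$ comparable to $j$," but the paper's direct estimate in fact handles that regime without trouble via \eqref{eq:small_ball_lin}; the two approaches are both perfectly viable.
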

\begin{proof}
    Fix $m$ and consider for each $k \in \N$ the tent function
    \[\psi_{m,k} \colon \C \to \R, \quad z \mapsto \max\bigl(1-k|z-de(1/m)|,0\bigr),\]
    which is a member of $C_b(\C)$. It suffices to show that
    \[\lim_{k\to\infty}\lVert \Phi(\psi_{m,k}-1_{\{de(1/m)\}})\rVert_{\infty} = 0.\]
    Since we have
    \[|\psi_{m,k}-1_{\{de(1/m)\}}| \leq 1_{\overline{B}^*(de(1/m),1/k)},\]
    it is enough to show that
    \[\lim_{k\to\infty}\sup_{n\in\N}\eta_n^{*d}\bigl(\overline{B}^*(de(1/m),1/k)\bigr) = 0.\]
    So let $\varepsilon > 0$ be given. By continuity of probability measures, for each fixed $n \in \N$ we have
    \[\lim_{k\to\infty}\eta_n^{*d}\bigl(\overline{B}^*(de(1/m),1/k)\bigr) = \eta_n^{*d}\Biggl(\bigcap_{k=1}^{\infty}\overline{B}^*(de(1/m),1/k)\Biggr) = \eta_n^{*d}(\varnothing) = 0.\]
    Hence, we may choose $k > 2/\varepsilon$ so that
    \[\eta_n^{*d}\bigl(\overline{B}^*(de(1/m),1/k)\bigr) < \varepsilon\]
    for all $n \leq 2/\varepsilon$. On the other hand, for $n > 2/\varepsilon$, Lemma \ref{lm:small_ball_prob} implies
    \[\eta_n^{*d}\bigl(\overline{B}^*(de(1/m),1/k)\bigr) \leq \frac{1}{k} + \frac{1}{n} < \varepsilon.\]
    In conclusion, we have
    \[\sup_{n\in\N}\eta_n^{*d}\bigl(\overline{B}^*(de(1/m),\varepsilon)\bigr) \leq \varepsilon,\]
    as desired.
\end{proof}

The second and final step is encoded by the following lemma.

\begin{lemma}
\label{lm:mobius_fn}
Let $\mu$ denote the Möbius function. Then for any $m, K \in \N$ we have
\[\Bigl\lVert m^d\sum_{k=1}^{K}\mu(k)g_{km} - \delta_m\Bigr\rVert_{\infty} \leq \frac{1}{K}.\]
In particular, for any $m \in \N$, the function $\delta_m$ lies in the closed linear span of $\{g_k \mid k \in \N\}$ in $C(\N^*)$.
\end{lemma}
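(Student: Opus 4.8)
The plan is to recognise the displayed expression as a truncation of an infinite series that represents $\delta_m$ exactly, and then estimate the discarded tail directly. Fix $m, K \in \N$ and set $h_K \vcentcolon= m^d\sum_{k=1}^{K}\mu(k)g_{km}$; I want to bound $\lVert h_K - \delta_m\rVert_\infty$ by evaluating at an arbitrary point $n \in \N^*$. At $n = \infty$ both functions vanish, and if $m \nmid n$ then $g_{km}(n) = 0$ for every $k$ (since $km \mid n$ would force $m \mid n$) while also $\delta_m(n) = 0$, so the difference is $0$ in these cases. The only remaining case is $n = m\ell$ with $\ell \in \N$. Since $g_{km}(m\ell) = (m\ell)^{-d}$ precisely when $k \mid \ell$ and is $0$ otherwise, so that $m^d g_{km}(m\ell) = \ell^{-d} \cdot 1_{\{k\mid\ell\}}$, one gets
\[h_K(m\ell) = \frac{1}{\ell^d}\sum_{\substack{k \mid \ell\\ k \leq K}}\mu(k).\]

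Next I would bring in the elementary identity $\sum_{k\mid\ell}\mu(k) = 1_{\{\ell=1\}}$, which lets me rewrite the truncated divisor sum as $1_{\{\ell=1\}} - \sum_{k\mid\ell,\ k>K}\mu(k)$. As $1_{\{\ell=1\}}/\ell^d = 1_{\{\ell=1\}} = \delta_m(m\ell)$, this yields
\[h_K(m\ell) - \delta_m(m\ell) = -\frac{1}{\ell^d}\sum_{\substack{k\mid\ell\\ k>K}}\mu(k).\]
If $\ell \leq K$ the sum is empty and the difference vanishes. If $\ell > K$, I would bound $|\mu(k)| \leq 1$ and count the divisors in play: the map $k \mapsto \ell/k$ is a bijection from $\{k\mid\ell : k > K\}$ onto $\{j\mid\ell : j < \ell/K\}$, and the latter set is contained in $\{1,\dots,\lceil \ell/K\rceil - 1\}$, hence has at most $\ell/K$ elements. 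Therefore the difference at $m\ell$ has absolute value at most $\ell^{-d}\cdot(\ell/K) = \ell^{1-d}/K \leq 1/K$, using $d \geq 1$. Taking the supremum over $n \in \N^*$ gives the stated inequality.

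For the final assertion, observe that each $h_K$ is a finite $\R$-linear combination of the functions $g_{km}$ with $1 \leq k \leq K$, and therefore lies in the linear span of $\{g_k \mid k \in \N\}$ inside $C(\N^*)$; since $\lVert h_K - \delta_m\rVert_\infty \leq 1/K \to 0$ as $K \to \infty$, the function $\delta_m$ is the uniform limit of such combinations and so lies in their closed linear span. The argument presents no genuine difficulty; the only step deserving care is the divisor count $\#\{k \mid \ell : k > K\} \leq \ell/K$, which is precisely what produces the rate $1/K$ and which is also where the hypothesis $d \in \N$ (so that $\ell^{1-d} \leq 1$) is used.
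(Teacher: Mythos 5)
Your proof is correct and follows essentially the same route as the paper's: reduce to the case $m \mid n$, use $\sum_{k\mid r}\mu(k)=\delta_1(r)$ to identify the error as a truncated tail of a M\"obius sum, and bound the number of divisors of $\ell=n/m$ exceeding $K$ by $\ell/K$ via the involution $k\mapsto\ell/k$. The only difference is the cosmetic substitution $n=m\ell$.
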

\begin{proof}
    Fix $m$, $K$ and consider the function
    \[h = \delta_m - m^d\sum_{k=1}^{K}\mu(k)g_{km},\]
    which clearly vanishes at $\infty$. Given a positive integer $n$, one computes
    \[m^d\sum_{k=1}^{K}\mu(k)g_{km}(n) = \begin{cases}\bigl(\frac{m}{n}\bigr)^d\sum_{k\in[K],\ k\mid \frac{n}{m}}\mu(k) & \text{if } m \mid n\\0 & \text{else}\end{cases}.\]
    Thus, if $m$ doesn't divide $n$, then certainly $h(n) = 0$. On the other hand, if $m$ divides $n$, then by the well-known fact that
    \[\sum_{s\mid r}\mu(s) = \delta_1(r)\]
    for any positive integer $r$, one obtains
    \[h(n) = \delta_m(n) - \Bigl(\frac{m}{n}\Bigr)^d\Biggl(\delta_m(n)-\sum_{k>K,\ k\mid \frac{n}{m}}\mu(k)\Biggr) = \Bigl(\frac{m}{n}\Bigr)^d\sum_{k>K,\ k\mid \frac{n}{m}}\mu(k).\]
    Since $|\mu(k)| \leq 1$ for any $k$, the triangle inequality implies that
    \[|h(n)| \leq \Bigl(\frac{m}{n}\Bigr)^d\Bigl|\Bigl\{k > K\ \Big|\ k \mid \frac{n}{m}\Bigr\}\Bigr|.\]
    Since the map $d \mapsto \frac{r}{d}$ is an involution on the set of divisors of $r \in \N$, it follows that
    \[|h(n)| \leq \Bigl(\frac{m}{n}\Bigr)^d\Bigl|\Bigl\{k < \frac{n}{Km}\ \Big|\ k \mid \frac{n}{m}\Bigr\}\Bigr| \leq \frac{m}{n} \cdot \frac{n}{Km} = \frac{1}{K},\]
    as desired.
\end{proof}

\section{Convergence in probability}\label{sec:conv_in_prob}

In this section, we prove Theorem \ref{thm:conv_in_prob}. The main step is to establish convergence in probability of shifted singular value measures of the sequence $(C_n)_{n\geq1}$, which then serves as the input (i) to Proposition \ref{prop:herm}. We begin by deriving an expression for the moments of shifted singular value measures. This is accomplished by a calculation of a standard type. We first fix some global notation. Let $n,k$ be positive integers. For each $z \in \C$ we define the random moment
\[W_{n,k,z} \vcentcolon= \int_{[0,\infty)}t^{2k}\,d\nu_{C_n-zI}(t).\]
Given a set $P \subseteq [2k]$ and a tuple $j \in [d]^P$, we introduce the event
\[E_{n,k,j} \vcentcolon= \Biggl\{\sum_{i=1}^{d}p(k,i,j)X_{n,i} = 0\Biggr\},\]
where for $i \in [d]$ we define
\[p(k,i,j) \vcentcolon= |\{l \in P \cap [k] \mid j_l = i\}| - |\{l \in P \setminus [k] \mid j_l = i\}|.\]
Finally, for the sake of brevity, we write
\[c(k,P,z) \vcentcolon= (-z)^{|\{1,\ldots,k\}\setminus P|}(-\overline{z})^{|\{k+1,\ldots,2k\}\setminus P|},\]
where we make the convention that $0^0 \vcentcolon= 1$.

\begin{lemma}
\label{lm:moments_of_esd}
We have the expression
\[W_{n,k,z} = \sum_{P\subseteq [2k]}c(k,P,z)\sum_{j\in[d]^P}1_{E_{n,k,j}}.\]
In particular, we have
\[\E\bigl[W_{n,k,z}\bigr] = \sum_{P\subseteq [2k]}c(k,P,z)\sum_{j\in[d]^P}\P(E_{n,k,j})\]
and
\[\mathrm{Var}(W_{n,k,z}) = \sum_{P,P'\subseteq [2k]}c(k,P,z)c(k,P',\overline{z})\sum_{j\in[d]^P,\ j'\in[d]^{P'}}\mathrm{Cov}(1_{E_{n,k,j}},1_{E_{n,k,j'}}).\]
\end{lemma}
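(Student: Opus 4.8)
The plan is to reduce the computation to character orthogonality on $G_n$. As observed in Section~\ref{subsec:fourier_analysis}, the matrix $C_n - zI$ is again $G_n$-circulant — it corresponds to the convolution operator $T_{S_n - z1_{\{0\}}}$ — hence normal and diagonalised by the Fourier transform, with singular values $\{|\widehat{S_n}(\gamma) - z| : \gamma \in \widehat{G_n}\}$. Therefore
\[
W_{n,k,z} = \frac{1}{|G_n|}\sum_{\gamma\in\widehat{G_n}}|\widehat{S_n}(\gamma)-z|^{2k} = \frac{1}{|G_n|}\sum_{\gamma\in\widehat{G_n}}(\widehat{S_n}(\gamma)-z)^k\,\overline{(\widehat{S_n}(\gamma)-z)}^{\,k}.
\]
Substituting $\widehat{S_n}(\gamma) = \sum_{i=1}^d\overline{\gamma(X_{n,i})}$ and expanding each $k$-th power by the multinomial theorem, I would index the first factor by a subset $P\cap[k]\subseteq[k]$ (the slots at which one of the $d$ terms $\overline{\gamma(X_{n,i})}$ is selected rather than $-z$) together with a map $P\cap[k]\to[d]$ recording which term is selected, and symmetrically index the second factor by a subset $P\setminus[k]\subseteq\{k+1,\ldots,2k\}$ together with a map recording which conjugated term is selected. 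This presents $|\widehat{S_n}(\gamma)-z|^{2k}$ as a sum over $P\subseteq[2k]$ and $j\in[d]^P$, where the leftover slots contribute exactly the factors of $-z$ and $-\overline{z}$ making up $c(k,P,z)$.

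It then remains to evaluate the character-dependent part. For fixed $P$ and $j$, the product of the selected terms is $\prod_{l\in P\cap[k]}\overline{\gamma(X_{n,j_l})}\cdot\prod_{l\in P\setminus[k]}\gamma(X_{n,j_l})$; grouping by $i\in[d]$ and using that each $\gamma(X_{n,i})$ lies on the unit circle, so that powers of $\gamma(X_{n,i})$ and $\overline{\gamma(X_{n,i})}$ cancel, this equals $\prod_{i=1}^d\overline{\gamma(X_{n,i})}^{\,p(k,i,j)} = \overline{\gamma\bigl(\sum_{i=1}^d p(k,i,j)X_{n,i}\bigr)}$. Summing over $\gamma$ and dividing by $|G_n|$, the orthogonality relations for the characters of $G_n$ collapse $\frac{1}{|G_n|}\sum_{\gamma\in\widehat{G_n}}\overline{\gamma(y)}$ to $1_{\{y=0\}}$, which is precisely $1_{E_{n,k,j}}$ for $y = \sum_i p(k,i,j)X_{n,i}$. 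Interchanging the finitely many sums yields the asserted identity for $W_{n,k,z}$.

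The two remaining formulas are then purely formal consequences. Since $W_{n,k,z}$ is the deterministic linear combination $\sum_{P\subseteq[2k]}c(k,P,z)\sum_{j\in[d]^P}1_{E_{n,k,j}}$ of the indicator variables $1_{E_{n,k,j}}$, linearity of expectation gives the formula for $\E[W_{n,k,z}]$, while bilinearity of covariance gives the formula for $\mathrm{Var}(W_{n,k,z})$, using that $W_{n,k,z}$ is real-valued so that $\overline{c(k,P',z)} = c(k,P',\overline{z})$ may be substituted in the second factor. The only genuine work lies in the bookkeeping of the expansion step: there is no analytic subtlety, and I expect the sole point requiring care to be the identification of the index sets and leftover exponents with $c(k,P,z)$ and $p(k,i,j)$, together with the sign conventions (in particular the convention $0^0\vcentcolon=1$, relevant when $z=0$ or when a leftover slot set is empty).
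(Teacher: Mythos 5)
Your proof is correct, and it takes the Fourier-dual route to the one in the paper. You write $W_{n,k,z} = \frac{1}{|G_n|}\sum_{\gamma\in\widehat{G_n}}|\widehat{S_n}(\gamma)-z|^{2k}$ via the normal diagonalisation, expand the $2k$-fold product in $\gamma$-space, and invoke orthogonality of characters $\frac{1}{|G_n|}\sum_{\gamma}\overline{\gamma(y)} = 1_{\{y=0\}}$ to produce $1_{E_{n,k,j}}$. The paper instead stays on the group side: it writes $W_{n,k,z} = \frac{1}{|G_n|}\mathrm{Tr}\bigl[\bigl((C_n-zI)^{\dagger}(C_n-zI)\bigr)^k\bigr]$ as a sum over additive $2k$-tuples $x\in\cX_{n,k}$, then expands $\prod_{l}(S_n - z1_{\{0\}})(x_l)$ there, with the membership $x\in\cX_{n,k}$ playing the role that orthogonality plays for you. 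As Remark~\ref{rem:additive_energy} notes, the two points of view are related by Parseval and the fact that the Fourier transform diagonalises convolution, so the two proofs are mechanically equivalent: the bookkeeping (which of the $2k$ slots contribute a character/indicator term versus a factor $-z$ or $-\overline{z}$, and which $i\in[d]$ is selected on each contributing slot, giving the coefficient $c(k,P,z)$ and the exponent vector $p(k,\cdot,j)$) is the same in both. The passage from the identity for $W_{n,k,z}$ to the formulas for $\E[W_{n,k,z}]$ and $\mathrm{Var}(W_{n,k,z})$ is, as you say, purely formal; your observation that $W_{n,k,z}$ is real and $\overline{c(k,P',z)} = c(k,P',\overline{z})$ is exactly what justifies the form of the covariance sum.
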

\begin{proof}
    We have
    \begin{equation}\label{eq:additive_energy}
        W_{n,k,z} = \frac{1}{|G_n|}\mathrm{Tr}\Bigl[\bigl((C_n-zI)^{\dagger}(C_n-zI)\bigr)^k\Bigr] = \sum_{x\in\cX_{n,k}}\prod_{l=1}^{k}(S_n-z1_{\{0\}})(x_l)\prod_{l=k+1}^{2k}(S_n-\overline{z}1_{\{0\}})(x_l),
    \end{equation}
    where we define the set of additive $2k$-tuples in $G_n$ to be
    \[\cX_{n,k} \vcentcolon= \{(x_1,\ldots,x_k,x_{k+1},\ldots,x_{2k}) \in G_n^{2k} \mid x_1+\ldots+x_k = x_{k+1} + \ldots + x_{2k}\}.\]
    By expanding out the products, we obtain that the summand in \eqref{eq:additive_energy} equals
    \[\sum_{P\subseteq[2k]}c(k,P,z)\sum_{j\in[d]^P}\prod_{l\in P}1_{\{X_{n,j_l} = x_l\}}\prod_{l\in[2k]\setminus P}1_{\{0\}}(x_l).\]
    The desired conclusion now follows upon observing that
    \[\sum_{x\in\cX_{n,k}}\prod_{l\in P}1_{\{X_{n,j_l} = x_l\}}\prod_{l\in[2k]\setminus P}1_{\{0\}}(x_l) = 1_{E_{n,k,j}}\]
    and swapping the order of summation.
\end{proof}

\begin{remark}
\label{rem:additive_energy}
In a Fourier-analytic guise, \eqref{eq:additive_energy} can be viewed as a consequence of Parseval's identity and the fact that the Fourier transform diagonalises convolution. On the event \eqref{eq:diff_elements}, $W_{n,k,0}$ is precisely the $k$-fold additive energy of the subset of $G_n$ whose indicator function is $S_n$. Broadly similar considerations appear for example in \cite[\S4.5]{green-harmonic-analysis}.
\end{remark}

We now arrive at a criterion for weak convergence of $(\mu_{C_n})_{n\geq1}$ in probability, which is the main stepping stone towards Theorem \ref{thm:conv_in_prob}. The proof of the more difficult backward implication entails the Hermitisation strategy and the subsidiary method of moments.

\begin{proposition}
\label{prop:crit_conv_prob}
The following are equivalent:
\begin{enumerate}[(i)]
    \item $(\mu_{C_n})_{n\geq1}$ converges weakly in probability;
    \item $(\mu_{C_n})_{n\geq1}$ converges weakly in expectation and for all $k \in \N$, $z \in \C$ we have
    \begin{equation}\label{eq:var_conv2}
        \lim_{n\to\infty}\mathrm{Var}(W_{n,k,z}) = 0.
    \end{equation}
\end{enumerate}
\end{proposition}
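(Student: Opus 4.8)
The plan is to treat the two implications separately: the forward one, (i)$\,\Rightarrow\,$(ii), is a soft consequence of the boundedness of the spectrum, while the backward one, (ii)$\,\Rightarrow\,$(i), rests on the Hermitisation criterion of Proposition~\ref{prop:herm}.

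For (i)$\,\Rightarrow\,$(ii), I would first recall that weak convergence in probability always implies weak convergence in expectation, so only \eqref{eq:var_conv2} needs to be checked. Since the eigenvalues of $C_n$ are the sums $\widehat{S_n}(\gamma)$ of $d$ roots of unity, $\mu_{C_n}$ is supported in $\overline{B}(0,d)$ almost surely; hence $W_{n,k,z} = \int_{\C}|w-z|^{2k}\,d\mu_{C_n}(w) = \int_{\C}f\,d\mu_{C_n}$ for any $f\in C_b(\C)$ that agrees with $w\mapsto|w-z|^{2k}$ on $\overline{B}(0,d)$. Weak convergence in probability of $(\mu_{C_n})_{n\geq1}$ then gives $W_{n,k,z}\to\int_{\C}f\,d\mu$ in probability, and since $0\leq W_{n,k,z}\leq(d+|z|)^{2k}$ uniformly, bounded convergence upgrades this to $L^2$-convergence; as the limit is deterministic, $\mathrm{Var}(W_{n,k,z})\to0$.

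For (ii)$\,\Rightarrow\,$(i), I would verify the two hypotheses of Proposition~\ref{prop:herm} for $\lambda$-a.e.\ $z\in\C$. Hypothesis (ii) of that proposition is exactly the content of Proposition~\ref{prop:unif_int}, so only hypothesis (i) remains. Fix $z$, let $\mu$ be the weak-in-expectation limit of $(\mu_{C_n})_{n\geq1}$ (supported in $\overline{B}(0,d)$), and let $\nu_z$ be the pushforward of $\mu$ under $w\mapsto|w-z|$, a probability measure on $[0,d+|z|]$. Since $t\mapsto t^2$ is a homeomorphism of $[0,d+|z|]$ onto $[0,(d+|z|)^2]$, it suffices to show that the pushforwards $\widetilde{\nu}_n$ of $\nu_{C_n-zI}$ under this map converge weakly in probability to the corresponding pushforward $\widetilde{\nu}_z$ of $\nu_z$; all of these live on the fixed compact interval $[0,(d+|z|)^2]$, the $k$-th moment of $\widetilde{\nu}_n$ is $W_{n,k,z}$, and that of $\widetilde{\nu}_z$ is $W_{k,z}\vcentcolon=\int_{\C}|w-z|^{2k}\,d\mu(w)$. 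Boundedness of the spectrum together with weak convergence in expectation gives $\E[W_{n,k,z}]\to W_{k,z}$, and \eqref{eq:var_conv2} gives $\mathrm{Var}(W_{n,k,z})\to0$, so by Chebyshev's inequality $W_{n,k,z}\to W_{k,z}$ in probability. As a probability measure on a fixed compact interval is determined by its moments, the method of moments for random measures (see Appendix~\ref{app:tech_lemmas}) yields $\widetilde{\nu}_n\to\widetilde{\nu}_z$ weakly in probability, and pushing forward under $s\mapsto\sqrt{s}$ gives $\nu_{C_n-zI}\to\nu_z$ weakly in probability. Proposition~\ref{prop:herm} then produces a probability measure $\mu'$ with $\mu_{C_n}\to\mu'$ weakly in probability, which is (i) (necessarily $\mu'=\mu$ by uniqueness of weak-in-expectation limits).

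I expect these steps to be short, since the two genuinely hard inputs --- uniform integrability of $\log$ (Proposition~\ref{prop:unif_int}) and the vanishing of the variances \eqref{eq:var_conv2} --- are either already proved or assumed as part of (ii). The only point needing care is the moment-method step: Lemma~\ref{lm:moments_of_esd} controls only the \emph{even} moments of $\nu_{C_n-zI}$, which is why one passes to the squared measures $\widetilde{\nu}_n$, where these become ordinary moments; and one must use the a priori boundedness of the spectrum both to make the limiting measure $\nu_z$ moment-determinate and to ensure that $\E[W_{n,k,z}]$ converges to $W_{k,z}$ rather than to some other quantity.
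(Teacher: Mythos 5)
Your proof is correct and follows essentially the same strategy as the paper: the forward direction uses the compact support of $\mu_{C_n}$ together with boundedness of $W_{n,k,z}$ to pass from weak convergence in probability to $L^2$-convergence, and the backward direction combines Proposition~\ref{prop:unif_int} with the method of moments and feeds the result into Proposition~\ref{prop:herm}. The one noteworthy difference is in how you package the moment step: the paper applies Proposition~\ref{prop:moment_method} directly to $\nu_{C_n-zI}$ and verifies its Carleman-type growth condition via the crude bound from Lemma~\ref{lm:moments_of_esd}, whereas you push forward to squared measures $\widetilde{\nu}_n$ on the fixed compact interval $[0,(d+|z|)^2]$, where moment determinacy is automatic, and then pull back under $s\mapsto\sqrt{s}$; this sidesteps both Lemma~\ref{lm:moments_of_esd} and the Carleman condition, and also identifies $\nu_z$ explicitly as $|\cdot-z|_*\mu$ (which the paper does not need to do). The only small caveat is that Proposition~\ref{prop:moment_method} as stated in the appendix is formulated for singular value measures and even moments, so your appeal to it for the pushforwards $\widetilde{\nu}_n$ should really cite the general Fleermann--Kirsch form of the method of moments underlying it; this is a citation matter, not a gap.
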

\begin{proof}
    We first prove that (i) implies (ii), so assume there exists a probability measure $\mu$ on $\C$ such that $\mu_n \to \mu$ weakly probability. Since weak convergence in probability implies weak convergence in expectation, it follows that $\mu_{C_n} \to \mu$ weakly in expectation. Furthermore, note that we have
    \begin{equation}\label{eq:shifted_moment}
        W_{n,k,z} = \int_{\C}|w-z|^{2k}\,d\mu_{C_n}(w).
    \end{equation}
    Since $\mu_{C_n}$ is supported on $\overline{B}(0,d)$, the integrand is uniformly integrable in probability with respect to $(\mu_{C_n})_{n\geq1}$. By Lemma \ref{lm:unif_int_prob}, it follows that
    \[W_{n,k,z} \to \int_{\C}|w-z|^{2k}\,d\mu(w)\]
    in probability, where the integral on the right-hand side is finite. Since the random variables $(W_{n,k,z})_{n\geq1}$ are uniformly bounded (say by $(d+|z|)^{2k}$), it follows that \eqref{eq:var_conv2} holds, as desired.

    Conversely, suppose that (ii) holds. By Propositions \ref{prop:herm} and \ref{prop:unif_int}, it is enough to show that for all $z \in \C$ there is a probability measure $\nu_z$ on $[0,\infty)$ such that $\nu_{C_n-zI} \to \nu_z$ weakly in probability. To this end, we use the method of moments, in the form of Proposition \ref{prop:moment_method}. It suffices to show that for each $k \geq 1$ the limit
    \[b_k \vcentcolon= \lim_{n\to\infty}\E\bigl[W_{n,k,z}\bigr]\]
    exists and that $(b_k)_{k\geq1}$ satisfies the condition (ii) of Proposition \ref{prop:moment_method}. Note that the expression \eqref{eq:shifted_moment} implies that
    \[\E\bigl[W_{n,k,z}\bigr] = \int_{\C}|w-z|^{2k}\,d\E\bigl[\mu_{C_n}\bigr](w).\]
    By assumption, the sequence $(\E\bigl[\mu_{C_n}\bigr])_{n\geq1}$ converges weakly. Similarly as before, the integrand in the above identity is uniformly integrable with respect to this sequence, so Corollary \ref{cor:unif_int} implies the existence of the limit $b_k$. Finally, by Lemma \ref{lm:moments_of_esd}, we may crudely estimate
    \[b_k \leq \sum_{P\subseteq[2k]}|c(k,P,z)|\bigl|[d]^P\bigr| \leq \sum_{P\subseteq[2k]}\max(|z|,1)^{2k-|P|}d^{|P|} = (\max(|z|,1)+d)^{2k},\]
    so we in fact have $\limsup_{k\to\infty}b_{2k}^{1/(2k)}/(2k) = 0$. This concludes the proof.
\end{proof}

We have seen in Lemma \ref{lm:moments_of_esd} that the expectation/variance of $W_{n,k,z}$ can be expressed in terms of densities of solutions to certain linear equations over $G_n$, namely those appearing in the events $E_{n,k,j}$. The following lemma converts these densities into a more tractable form. As a consequence, it also gives a necessary and sufficient condition for the convergence of $\mathrm{Var}(W_{n,k,z})$ to zero. A crucial point for the necessity of this condition is that the events $E_{n,k,j}$ for fixed $n, k$ are pairwise positively correlated.

\begin{lemma}
\label{lm:covariance}
Let $k$ be a positive integer. Then for any $n \geq 1$, $P,P' \subseteq [2k]$ and $j \in [d]^P$, $j' \in [d]^{P'}$, the covariance $\mathrm{Cov}(1_{E_{n,k,j}},1_{E_{n,k,j'}})$ can be expressed as the difference between
\begin{equation}\label{eq:two_by_two_prob}
    \P_{x,x'\in G_n}(\forall i \in [d]\ p(k,i,j)x+p(k,i,j')x' = 0)
\end{equation}
and
\begin{equation}\label{eq:prod_prob}
    \P_{x\in G_n}(\forall i \in [d]\ p(k,i,j)x = 0)\P_{x'\in G_n}(\forall i \in [d]\ p(k,i,j')x' = 0).
\end{equation}
In particular, it is non-negative, i.e.\ the events $E_{n,k,j}$, $E_{n,k,j'}$ are positively correlated. Furthermore,
\begin{equation}\label{eq:var_conv1}
    \lim_{n\to\infty}\mathrm{Var}(W_{n,k,z}) = 0
\end{equation}
holds for all $z \in \C$ if and only if
\begin{equation}\label{eq:covar_conv}
    \lim_{n\to\infty}\mathrm{Cov}(1_{E_{n,k,j}},1_{E_{n,k,j'}}) = 0.
\end{equation}
holds for all $P,P' \subseteq [2k]$ and $j \in [d]^P$, $j' \in [d]^{P'}$.
\end{lemma}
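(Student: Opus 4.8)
The plan is to dispatch the three assertions separately; the first two reduce to Fourier inversion on $G_n$, while the equivalence is where the real work lies. For the \textbf{covariance identity}, write $p_i \vcentcolon= p(k,i,j)$ and $p_i' \vcentcolon= p(k,i,j')$, so that $\mathrm{Cov}(1_{E_{n,k,j}},1_{E_{n,k,j'}}) = \P(E_{n,k,j}\cap E_{n,k,j'}) - \P(E_{n,k,j})\P(E_{n,k,j'})$, and I would evaluate each probability by Fourier inversion on $G_n$. Using $1_{\{g=0\}} = \E_{\chi\in\widehat{G_n}}\chi(g)$, the independence of $X_{n,1},\dots,X_{n,d}$, and the orthogonality relation $\E_{x\in G_n}\psi(x) = 1_{\{\psi\ \text{trivial}\}}$ valid for any $\psi\in\widehat{G_n}$, one obtains
\[\P(E_{n,k,j}\cap E_{n,k,j'}) = \E_{\chi,\chi'\in\widehat{G_n}}\prod_{i=1}^{d}\E_{x\in G_n}\bigl[\bigl(\chi^{p_i}(\chi')^{p_i'}\bigr)(x)\bigr] = \P_{\chi,\chi'\in\widehat{G_n}}\bigl(\forall i\in[d]\ \chi^{p_i}(\chi')^{p_i'}\ \text{trivial}\bigr),\]
and similarly $\P(E_{n,k,j}) = \P_{\chi\in\widehat{G_n}}(\forall i\in[d]\ \chi^{p_i}\ \text{trivial})$. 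Since $\widehat{G_n}\cong G_n$, rewriting these additively yields precisely \eqref{eq:two_by_two_prob} and \eqref{eq:prod_prob}, so the covariance is their difference.

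\textbf{Non-negativity} is then immediate: the sets $H \vcentcolon= \{x\in G_n : p_i x = 0\ \forall i\in[d]\}$ and $H' \vcentcolon= \{x'\in G_n : p_i' x' = 0\ \forall i\in[d]\}$ are subgroups of $G_n$, so \eqref{eq:prod_prob} equals $|H\times H'|/|G_n|^2$; but every $(x,x')\in H\times H'$ satisfies $p_i x + p_i' x' = 0$ for all $i$, so $H\times H'$ is contained in the event underlying \eqref{eq:two_by_two_prob}, whence that probability is at least $|H\times H'|/|G_n|^2$, and $\mathrm{Cov}(1_{E_{n,k,j}},1_{E_{n,k,j'}})\geq 0$.

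For the \textbf{equivalence}, that \eqref{eq:covar_conv} implies \eqref{eq:var_conv1} is clear, since by Lemma~\ref{lm:moments_of_esd} the quantity $\mathrm{Var}(W_{n,k,z})$ is a linear combination of the covariances over a fixed finite index set, with coefficients $c(k,P,z)c(k,P',\overline{z})$ bounded in terms of $k$ and $|z|$. The converse is the crux, and the key point is that no cancellation occurs among the covariances contributing to a single monomial in $z,\overline{z}$. Writing $a_P \vcentcolon= |\{1,\dots,k\}\setminus P|$ and $b_P \vcentcolon= |\{k+1,\dots,2k\}\setminus P|$, one has $c(k,P,z) = (-1)^{a_P+b_P}z^{a_P}\overline{z}^{b_P}$ and $c(k,P',\overline{z}) = (-1)^{a_{P'}+b_{P'}}z^{b_{P'}}\overline{z}^{a_{P'}}$, hence $c(k,P,z)c(k,P',\overline{z}) = (-1)^{a_P+b_P+a_{P'}+b_{P'}}z^{a_P+b_{P'}}\overline{z}^{b_P+a_{P'}}$. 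For any $(P,P')$ contributing to a fixed monomial $z^a\overline{z}^b$ we have $a+b = (a_P+b_{P'})+(b_P+a_{P'}) = (a_P+b_P)+(a_{P'}+b_{P'})$, so the sign $(-1)^{a_P+b_P+a_{P'}+b_{P'}}$ equals $(-1)^{a+b}$, the same for all such $(P,P')$. Thus the coefficient of $z^a\overline{z}^b$ in $\mathrm{Var}(W_{n,k,z})$ is $(-1)^{a+b}$ times a sum of finitely many of the covariances $\mathrm{Cov}(1_{E_{n,k,j}},1_{E_{n,k,j'}})$, each non-negative by the part already proved. Since $z\mapsto\mathrm{Var}(W_{n,k,z})$ is a polynomial in $z$ and $\overline{z}$ of degree bounded uniformly in $n$ — hence a fixed linear function of its values on a fixed finite set of points — the hypothesis \eqref{eq:var_conv1} of pointwise convergence to $0$ forces every coefficient to converge to $0$; being (up to sign) a sum of non-negative terms, each such coefficient then forces each of its summands, i.e.\ each $\mathrm{Cov}(1_{E_{n,k,j}},1_{E_{n,k,j'}})$, to converge to $0$, which is \eqref{eq:covar_conv}. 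The main obstacle is exactly this converse step — passing from the vanishing of the signed combination $\mathrm{Var}(W_{n,k,z})$ to the vanishing of the individual covariances — which works only because the sign identity above lets positivity of the covariances be exploited monomial by monomial; everything else is routine bookkeeping once Fourier inversion on $G_n$ is invoked.
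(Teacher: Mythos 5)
Your proof is correct, and apart from the final step it takes essentially the same route as the paper: for the covariance identity you reprove the duality underlying the paper's Lemma~\ref{lm:lin_sys} inline via Fourier inversion, and your subgroup argument for non-negativity (that $H\times H'$ is contained in the solution set of the joint two-variable system, so \eqref{eq:two_by_two_prob} is at least \eqref{eq:prod_prob}) is exactly the observation the paper leaves implicit when it says ``in particular.'' Where you genuinely diverge is the converse of the equivalence. The paper simply specialises \eqref{eq:var_conv1} to $z=-1$: at that point one has $c(k,P,-1)=c(k,P',-1)=1$ for all $P,P'$, so $\mathrm{Var}(W_{n,k,-1})$ is the \emph{unweighted} sum of all the covariances, and vanishing of a sum of non-negative terms forces vanishing of each term — one line. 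You instead expand $\mathrm{Var}(W_{n,k,z})$ as a polynomial in $z,\overline{z}$, verify via the sign identity $(-1)^{a_P+b_P+a_{P'}+b_{P'}}=(-1)^{a+b}$ that each monomial's coefficient is a sum of non-negative covariances up to a global sign, and then recover coefficient-wise convergence from pointwise convergence of bounded-degree polynomials. This is sound (the degree bound $\leq 2k$ in each of $z,\overline{z}$ is uniform in $n$, so coefficients are fixed linear functionals of finitely many values), and it is in fact a slightly stronger statement since it localises positivity monomial by monomial — but it reproves at greater length what the single evaluation at $z=-1$ gives for free. Your sign computation is really the same phenomenon that makes $z=-1$ work (it is the choice of $z$ at which all the monomials $(-1)^{a+b}z^a\overline{z}^b$ simultaneously equal $1$); the paper's shortcut is worth absorbing.
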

\begin{proof}
    We have
    \[\mathrm{Cov}(1_{E_{n,k,j}},1_{E_{n,k,j'}}) = \P(E_{n,k,j} \cap E_{n,k,j'}) - \P(E_{n,k,j})\P(E_{n,k,j'}).\]
    The first statement now follows from Lemma \ref{lm:lin_sys} applied to the $1\times d$ matrices $(p(k,i,j))_{i\in[d]}$ and $(p(k,i,j'))_{i\in [d]}$ as well as the $2 \times d$ matrix comprising these matrices as rows. To prove the second statement, we use Lemma \ref{lm:moments_of_esd}, bearing in mind that the sums in the expression for $\mathrm{Var}(W_{n,k,z})$ are finite. It is clear that \eqref{eq:covar_conv} implies \eqref{eq:var_conv1}. The converse follows by specialising \eqref{eq:var_conv1} to $z = -1$ and using the fact that $\mathrm{Cov}(1_{E_{n,k,j}},1_{E_{n,k,j'}}) \geq 0$.
\end{proof}

To establish Theorem \ref{thm:conv_in_prob}, it remains to unfold what it means for the covariances of the events $E_{n,k,j}$ to converge to zero. We will argue that, in the presence of weak convergence in expectation of $(\mu_{C_n})_{n\geq1}$, this forces the limiting measure $\rho$ from Theorem \ref{thm:conv_in_exp} to satisfy a certain functional equation. From this information, it will be a simple matter to deduce that $\rho$ must be a Dirac mass.

To achieve this plan, it will be useful to introduce the following setup. Given $k \in \N$, $P,P' \subseteq [2k]$ and $j\in[d]^P$, $j'\in[d]^{P'}$, choose a $2 \times 2$ integer matrix $A^{(k,j,j')}$ whose rows generate the same subgroup of $\Z^2$ as $\{(p(k,i,j), p(k,i,j')) \mid i \in [d]\}$. By Smith normal form (see e.g.\ \cite[Theorem 3.8, Theorem 3.9]{jacobson}), unless $A^{(k,j,j')}$ is the zero matrix, there exist matrices $U^{(k,j,j')}, V^{(k,j,j')} \in \mathrm{GL}_2(\Z)$ such that $U^{(k,j,j')}A^{(k,j,j')}V^{(k,j,j')}$ is the diagonal matrix
\[\mathrm{diag}\Biggl(\gcd\bigl(A^{(k,j,j')}_{1,1}, A^{(k,j,j')}_{1,2}, A^{(k,j,j')}_{2,1}, A^{(k,j,j')}_{2,2}\bigr), \frac{\big|A^{(k,j,j')}_{1,1}A^{(k,j,j')}_{2,2}-A^{(k,j,j')}_{1,2}A^{(k,j,j')}_{2,1}\big|}{\gcd\bigl(A^{(k,j,j')}_{1,1}, A^{(k,j,j')}_{1,2}, A^{(k,j,j')}_{2,1}, A^{(k,j,j')}_{2,2}\bigr)}\Biggr).\]
Here, we use the convention that if $r \in \N$ and $a_1,\ldots,a_r \in \Z$, then $\gcd(a_1,\ldots,a_r)$ is defined to be the largest positive integer dividing each of $a_1,\ldots,a_r$, unless all $a_1,\ldots,a_r$ are zero, when it is defined to be zero. Since the action of $\mathrm{GL}_2(\Z)$ on $G_n^2$ preserves the uniform probability measure, the probability \eqref{eq:two_by_two_prob} can be expressed as
\[\P_{y\in G_n^2}\bigl(A^{(k,j,j')}y = 0\bigr) = \P_{y\in G_n^2}\bigl(U^{(k,j,j')}A^{(k,j,j')}V^{(k,j,j')}y = 0\bigr).\]
This, in turn, conveniently factors as
\[\tau_{G_n}\Bigl(\gcd\bigl(A^{(k,j,j')}_{1,1}, A^{(k,j,j')}_{1,2}, A^{(k,j,j')}_{2,1}, A^{(k,j,j')}_{2,2}\bigr)\Bigr)\tau_{G_n}\Biggl(\frac{\big|A^{(k,j,j')}_{1,1}A^{(k,j,j')}_{2,2}-A^{(k,j,j')}_{1,2}A^{(k,j,j')}_{2,1}\big|}{\gcd\bigl(A^{(k,j,j')}_{1,1}, A^{(k,j,j')}_{1,2}, A^{(k,j,j')}_{2,1}, A^{(k,j,j')}_{2,2}\bigr)}\Biggr).\]
Here, for a finite abelian group $G$ and an integer $r$, we write
\[\tau_G(r) \vcentcolon= \P_{x\in G}(rx = 0)\] 
for the proportion of $r$-torsion elements of $G$. Continuing in a similar fashion, since the subgroup of $\Z$ generated by $a_1,\ldots,a_r\in \Z$ is precisely $\gcd(a_1,\ldots,a_r)\Z$, the product \eqref{eq:prod_prob} equals
\[\tau_{G_n}\Bigl(\gcd\bigl(p(k,1,j), \ldots, p(k,d,j)\bigr)\Bigr)\tau_{G_n}\Bigl(\gcd\bigl(p(k,1,j'), \ldots, p(k,d,j')\bigr)\Bigr).\]
Since the projection of the subgroup of $\Z^2$ generated by a given set is equal to the subgroup of $\Z$ generated by its projection, we may rewrite this product as
\[\tau_{G_n}\Bigl(\gcd\bigl(A^{(k,j,j')}_{1,1}, A^{(k,j,j')}_{2,1}\bigr)\Bigr)\tau_{G_n}\Bigl(\gcd\bigl(A^{(k,j,j')}_{1,2}, A^{(k,j,j')}_{2,2}\bigr)\Bigr).\]

We are now in a position to prove Theorem \ref{thm:conv_in_prob}. By Theorem \ref{thm:conv_in_exp} and Proposition \ref{prop:crit_conv_prob}, it suffices to prove that if $(\rho_{G_n})_{n\geq1}$ converges weakly to a probability measure $\rho$ on $\N^*$, then $\rho$ is a Dirac mass if and only if for all $k \in \N$ and $z \in \C$ we have
\begin{equation}\label{eq:var_conv3}
    \lim_{n\to\infty}\mathrm{Var}(W_{n,k,z}) = 0.
\end{equation}
If we now define the function
\[\tau \colon \N_0 \to [0,1], \quad m \mapsto \begin{cases}1 & \text{if } m = 0\\\sum_{m'\in\N,\ m'\mid m}\rho(m') & \text{else}\end{cases},\]
then $(\tau_{G_n}|_{\N_0})_{n\geq1}$ converges pointwise to $\tau$. Moreover, by Lemma \ref{lm:covariance} and the above discussion, \eqref{eq:var_conv3} holds for all $k \in \N$ and $z \in \C$ if and only if
\begin{equation}\label{eq:lim_prod1}
    \tau\Bigl(\gcd\bigl(A^{(k,j,j')}_{1,1}, A^{(k,j,j')}_{1,2}, A^{(k,j,j')}_{2,1}, A^{(k,j,j')}_{2,2}\bigr)\Bigr)\tau\Biggl(\frac{\big|A^{(k,j,j')}_{1,1}A^{(k,j,j')}_{2,2}-A^{(k,j,j')}_{1,2}A^{(k,j,j')}_{2,1}\big|}{\gcd\bigl(A^{(k,j,j')}_{1,1}, A^{(k,j,j')}_{1,2}, A^{(k,j,j')}_{2,1}, A^{(k,j,j')}_{2,2}\bigr)}\Biggr)
\end{equation}
equals
\begin{equation}\label{eq:lim_prod2}
    \tau\Bigl(\gcd\bigl(A^{(k,j,j')}_{1,1}, A^{(k,j,j')}_{2,1}\bigr)\Bigr)\tau\Bigl(\gcd\bigl(A^{(k,j,j')}_{1,2}, A^{(k,j,j')}_{2,2}\bigr)\Bigr)
\end{equation}
for all $k \in \N$, $P,P' \subseteq [2k]$ and $j \in [d]^P$, $j' \in [d]^{P'}$ such that $A^{(k,j,j')} \neq 0$. Thus, our task boils down to showing that $\tau$ satisfies these relations if and only if it is of the form $1_{m\N_0}$ for some $m \in \N_0$. One direction is clear: if $\tau = 1_{m\N_0}$, then \eqref{eq:lim_prod1} equals $1$ if $m$ divides each entry of $A^{(k,j,j')}$ and $0$ otherwise, and the same is true of \eqref{eq:lim_prod2}. To show the other implication, we contend that $\tau$ must satisfy the functional equation
\begin{equation}\label{eq:func_eq}
    \tau(\gcd(a,b)) = \tau(a)\tau(b)
\end{equation}
for all $a,b \in \N$. Indeed, given any such $a,b$, we may specialise the equality of \eqref{eq:lim_prod1} and \eqref{eq:lim_prod2} to the case when $k = \max(a,b)$, $P = [a]$, $P' = [b]$ and $j = 1_P$, $j' = 1_{P'}$. Then for $i \in [d]$ we have
\[p(k,i,j) = \begin{cases}a & \text{if } i = 1\\0 & \text{else}\end{cases}, \quad p(k,i,j') = \begin{cases}b & \text{if } i = 1\\0 & \text{else}\end{cases}.\]
Hence, we may take $A^{(k,j,j')}$ to be the matrix $\begin{pmatrix}a & b\\0 & 0\end{pmatrix}$, and the desired equality \eqref{eq:func_eq} follows. Finally, to finish from \eqref{eq:func_eq}, note that we in particular obtain that whenever $a \mid b$, either $\tau(a) = 0$ or $\tau(b) = 1$. Taking $a = b$, it follows that $\tau$ takes values in the set $\{0,1\}$. Now if $\tau = 1_{\{0\}}$, we are done. Otherwise, take the least $m \in \N$ such that $\tau(m) = 1$. Then for any $a \in \N$ we get that $\tau(a) = \tau(\gcd(a,m))$. But since $\gcd(a,m) \leq m$, it follows by minimality of $m$ that $\tau(\gcd(a,m)) = 1$ if $\gcd(a,m) = m$ and $\tau(\gcd(a,m)) = 0$ otherwise. Thus, $\tau(a) = 1$ if and only if $m \mid a$, as desired.

\section{Determinants}\label{sec:det}

With Theorem \ref{thm:conv_in_prob} in place, it is now a short step to Theorem \ref{thm:asymp_det}. Observe that
\[\frac{1}{|G_n|}\log|\det(C_n)| = \frac{1}{|G_n|}\sum_{k=1}^{|G_n|}\log\sigma_k(C_n) = \int_{[0,\infty)}\log\,d\nu_{C_n}.\]
By Theorem \ref{thm:conv_in_prob}, we have that $\mu_{C_n} \to \eta_m^{*d}$ weakly in probability, so it follows that $\nu_{C_n} \to |\cdot|_*\eta_m^{*d}$ weakly in probability. By Proposition \ref{prop:unif_int}, we know that $\log$ is uniformly integrable in probability with respect to $(\nu_{C_n})_{n\geq1}$, so the desired conclusion follows from Lemma \ref{lm:unif_int_prob}.

Several remarks concerning the statement of Theorem \ref{thm:asymp_det} are in order. We start by elaborating on the somewhat cryptic condition that $dR_{\exp(G_n)}$ shouldn't contain $0$.

\begin{remark}
\label{rem:singularity}
The assumption that $0\not\in dR_{\exp(G_n)}$ in Theorem \ref{thm:asymp_det} means precisely that $C_n$ is never singular. Dropping this assumption can result in $C_n$ being singular with high probability.\footnote{In particular, conditioning on the non-singularity of $C_n$ does not necessarily yield a contiguous model.} For example, let $p$ be a prime and let $d$ be a multiple of $p$. For each $n \geq 1$ let $G_n = \mathbb{F}_p^n$. Then each non-trivial character $\gamma \in \widehat{G_n}$ has order $p$, so $\widehat{S_n}(\gamma)$ is zero with probability bounded away from zero. Since $\widehat{S_n}(\gamma_1), \widehat{S_n}(\gamma_2)$ are independent for distinct $\gamma_1,\gamma_2 \in \widehat{G_n}$, a straightforward second moment argument reveals that the probability of $C_n$ being non-singular tends to zero as $n \to \infty$.
\end{remark}

To simplify our discussion, we assume in the remainder of this section that $G_n = \Z/n\Z$. In particular, this means that $\exp(G_n) = n$ for all $n \geq 1$. We recall the Euler--Mascheroni constant $\gamma \approx 0.577$, which incidentally makes an appearance in the discussion of two unrelated aspects of Theorem \ref{thm:asymp_det}.

\begin{remark}
\label{rem:density}
Similarly as in Remark \ref{rem:singularity}, it is not hard to see that leaving out the condition $0 \not\in dR_n$ may lead to $C_n$ being singular with probability bounded away from zero. It is natural to wonder just how restrictive this condition is. In other words, for which $n \in \N$ is it possible to find $d$ many $n$-th roots of unity summing to zero? As per the main result of Lam and Leung \cite{lam-leung}, this can be done if and only if $d$ is a non-negative integer linear combination of the prime factors of $n$.

To estimate the density of $n$ for which this doesn't hold, note that this property depends only on the divisibility of $n$ by primes not exceeding $d$. If we think of $n$ as being chosen at random from a large initial segment of $\N$, then these events are asymptotically independent. It follows that the set $F_d = \{n \in \N \mid 0\not\in dR_n\}$ has natural density
\[\alpha_d = \P(d \text{ is not an } \N_0\text{-linear combination of } \{p \mid D_p = 1\}),\]
where $D_p \sim \mathrm{Bernoulli}(1/p)$ are independent random variables and $p$ runs over primes. In particular, for $d \geq 2$, the set $F_d$ is non-trivial in that $\alpha_d$ lies strictly between $0$ and $1$. To get a more precise estimate for $\alpha_d$ as $d \to \infty$, observe first the lower bound
\begin{equation}\label{eq:lower_bound}
    \alpha_d \geq \P\Biggl(\sum_{p \leq d}D_p = 1,\ D_p = 0 \text{ whenever } p \mid d\Biggr).
\end{equation}
To get a matching upper bound, one can use the fact that for coprime $p,q \in \N$, any integer not less than $(p-1)(q-1)$ can be represented as a non-negative integer linear combination of $p$ and $q$ (see e.g.\ \cite[Lemma 5.1]{lam-leung}). It follows that
\begin{equation}\label{eq:upper_bound}
    \alpha_d \leq \P(\not\exists p, p' \text{ such that } p<p',\ pp' \leq d \text{ and } D_p = D_{p'} = 1).
\end{equation}
Using \eqref{eq:lower_bound}, \eqref{eq:upper_bound} and Mertens' theorems \cite{mertens}, it is a routine exercise in analytic number theory to obtain the asymptotic
\[\alpha_d \sim \frac{e^{-\gamma}\log\log d}{\log d}.\]
\end{remark}

One may ask how the constant $c_{m,d}$ behaves as $d$ grows. The following gives a fairly precise answer to this question, at least in the most interesting case $m = \infty$.

\begin{proposition}
\label{prop:clt}
We have 
\[\lim_{d\to\infty}\Bigl(c_{\infty,d}-\frac{\log d}{2}\Bigr) = -\frac{\gamma}{2}.\]
\end{proposition}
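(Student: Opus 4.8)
The plan is to read $c_{\infty,d}$ probabilistically and then combine a central limit theorem with a uniform integrability bound. Let $\zeta_1,\dots,\zeta_d$ be i.i.d.\ uniform on the unit circle and set $S_d=\sum_{j=1}^d\zeta_j$, so that $\eta_\infty^{*d}$ is the law of $S_d$ and $c_{\infty,d}=\E[\log|S_d|]$. Since $\E[|S_d|^2]=d$, the natural normalisation is $c_{\infty,d}-\tfrac12\log d=\E[\log|S_d/\sqrt d|]$. By the multivariate central limit theorem (the relevant second moments being $\E[\cos^2(2\pi U)]=\E[\sin^2(2\pi U)]=\tfrac12$ and $\E[\cos(2\pi U)\sin(2\pi U)]=0$ for $U$ uniform on $[0,1]$), $S_d/\sqrt d$ converges in distribution to $Z=X+iY$ with $X,Y$ independent centred real Gaussians of variance $\tfrac12$; hence $|Z|^2$ is exponentially distributed with mean $1$, and $\E[\log|Z|]=\tfrac12\E[\log|Z|^2]=\tfrac12\int_0^\infty(\log t)e^{-t}\,dt=\tfrac12\Gamma'(1)=-\gamma/2$. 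Since $z\mapsto\log|z|$ is continuous off the $\lambda$-null set $\{0\}$ and $\P(Z=0)=0$, we obtain $\log|S_d/\sqrt d|\to\log|Z|$ in distribution; so it suffices to prove that $\bigl(\log|S_d/\sqrt d|\bigr)_{d\ge1}$ is uniformly integrable, whereupon the standard fact that convergence in distribution together with uniform integrability forces convergence of means yields $c_{\infty,d}-\tfrac12\log d\to-\gamma/2$, as required.

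Uniform integrability of the positive part is routine: Hoeffding's inequality applied to $\mathrm{Re}(S_d)=\sum_j\cos(2\pi U_j)$ and to $\mathrm{Im}(S_d)$ gives $\P(|S_d|\ge t\sqrt d)\le 4e^{-t^2/4}$ for all $t\ge0$ and all $d$, and inserting this into the layer-cake identity \eqref{eq:int_of_log} shows $\sup_d\E\bigl[(\log|S_d/\sqrt d|)\,1_{\{|S_d|\ge e^t\sqrt d\}}\bigr]\to0$ as $t\to\infty$. For the negative part — the crux — a layer-cake computation in the spirit of \eqref{eq:int_of_log} reduces its uniform integrability to
\[\lim_{t\to\infty}\ \sup_{d\ge1}\Biggl(\int_0^{\sqrt d\,e^{-t}}\frac{\P(|S_d|\le r)}{r}\,dr+t\,\P\bigl(|S_d|\le\sqrt d\,e^{-t}\bigr)\Biggr)=0.\]

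This in turn follows from two small-ball estimates, both uniform in $d$. The first is crude: conditioning on $\zeta_2,\dots,\zeta_d$ and using that a disc of radius $r\le1$ meets the unit circle in an arc of angular length at most $2\arcsin r$ gives $\P(|S_d|\le r)\le r/2$ for all $r\in[0,1]$ and all $d\ge1$. The second is sharp and holds for $d$ beyond an absolute constant: the characteristic function of $S_d$ is $\xi\mapsto J_0(|\xi|)^d$, where $J_0$ is the Bessel function, so $S_d$ has a density $p_{S_d}$ given by Fourier inversion, and the bounds $|J_0(\rho)|\le e^{-c\rho^2}$ on bounded intervals and $|J_0(\rho)|\ll\rho^{-1/2}$ for large $\rho$ give $\int_0^\infty|J_0(\rho)|^d\rho\,d\rho\ll1/d$, hence $\|p_{S_d}\|_\infty\ll1/d$ and $\P(|S_d|\le r)\ll r^2/d$ for all $r>0$. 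Plugging these into the display above — using the $r^2/d$ bound and splitting the integral at $r=1$ when $\sqrt d\,e^{-t}>1$ — bounds the bracketed quantity by $O\bigl((1+t)e^{-2t}\bigr)$ uniformly over all large $d$, while the finitely many small $d$ (for which $\sqrt d$ is bounded) are handled by the crude estimate alone, giving $O\bigl((1+t)e^{-t}\bigr)$ there. This completes the uniform integrability and hence the proof.

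The single genuine obstacle is establishing the anti-concentration bound $\P(|S_d|\le r)\ll r^2/d$ uniformly in $d$: the crude arc-length estimate, adequate for $r$ very small, is hopeless in the regime $1\le r\le \sqrt d\,e^{-t}$ that arises for large $d$, so one really does need a local-limit-type input. If one wishes to avoid Bessel asymptotics, the same estimate can be extracted from an Esseen-type concentration-function inequality for $S_d$, whose only input is again the formula $|\widehat{\mu_{S_d}}(\xi)|=|J_0(|\xi|)|^d$ for the characteristic function.
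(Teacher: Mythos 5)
Your proof is correct and follows essentially the same route as the paper's: recast $c_{\infty,d}-\tfrac12\log d$ as $\E[\log|S_d/\sqrt d|]$, apply the CLT and compute $\E[\log|Z|]=-\gamma/2$, then justify the passage to the limit by proving uniform integrability of $\log|S_d/\sqrt d|$ via Hoeffding for the upper tail and a Fourier/Bessel small-ball bound $\P(|S_d|\le r)\ll r^2/d$ (for $d$ large, via $\reallywidehat{\eta_\infty^{*d}}=J_0(|\cdot|)^d\in\mathbb{L}^1$ and $\int_0^\infty|J_0(\rho)|^d\rho\,d\rho\ll1/d$) for the lower tail. The only divergence is cosmetic: you supply the crude arc-length bound $\P(|S_d|\le r)\le r/2$ to treat the finitely many small $d$ where the Fourier-inversion step is unavailable, whereas the paper leaves this trivial case implicit.
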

\begin{proof}[Proof.]
    We will be somewhat brief on the details. Let $(Z_d)_{d\geq1}$ be a sequence of independent random variables with distribution $\eta_{\infty}$. For each $d \geq 1$ consider the random variable
    \[Y_d \vcentcolon= \frac{Z_1+\ldots+Z_d}{\sqrt{d}}.\]
    By the central limit theorem, $Y_d \to Z$ in distribution, where $Z$ is a standard complex Gaussian random variable. Since
    \[c_{\infty,d}-\frac{\log d}{2} = \E\bigl[\log|Y_d|\bigr]\]
    and we have
    \[\E\bigl[\log|Z|\bigr] = \frac{1}{\pi}\int_{\C}e^{-|z|^2}\log|z|\,dz = \frac{1}{2}\int_{0}^{\infty}e^{-s}\log s\,ds = -\frac{\gamma}{2},\]
    the conclusion will follow by Corollary \ref{cor:unif_int} once we show that the random variables $(\log|Y_d|)_{d\geq1}$ are uniformly integrable. The upper tail is not a problem since $(|Y_d|)_{d\geq1}$ are uniformly subgaussian. Specifically, Hoeffding's inequality implies the existence of constants $c,C>0$ such that
    \[\P(|Y_d|\geq t) \leq C\exp(-ct^2)\]
    for all $t > 0$ and $d \geq 1$, so it follows from \eqref{eq:int_of_log} that
    \[\lim_{t\to\infty}\sup_{d\geq1}\E\bigl[(\log|Y_d|)1_{\{\log|Y_d|\geq t\}}\bigr] = 0.\]
    To deal with the lower tail, we employ a Fourier argument in the style of \cite[\S7.3]{tao-vu-additive}. By standard estimates for Bessel functions (see \cite[Appendix B]{grafakos}), there is a constant $C > 0$ such that
    \begin{equation}\label{eq:osc_integral}
        |\widehat{\eta_{\infty}}(\xi)| \leq C|\xi|^{-1/2}
    \end{equation}
    for $\xi \in \C$. In particular, if $d \geq 5$, then $\reallywidehat{\eta_{\infty}^{*d}} = \widehat{\eta_{\infty}}^d \in \mathbb{L}^1(\C)$ (where we identify $\C \cong \R^2$), so by Fourier inversion, $\eta_{\infty}^{*d}$ has density
    \[f_d(z) = \int_{\C}\widehat{\eta_{\infty}}(\xi)^de\bigl(\mathrm{Re}(z\overline{\xi})\bigr)\,d\xi\]
    with respect to $\lambda$. Therefore, for any $r > 0$, we have
    \[\P(|Y_d| \leq r) = \int_{\overline{B}(0,r\sqrt{d})}f_d(z)\,dz \leq r^2d\pi\lVert f_d\rVert_{\infty} \ll d\lVert \widehat{\eta_{\infty}}\rVert_d^dr^2.\]
    Hence, the identity \eqref{eq:int_of_log} gives
    \[\E\Bigl[\log\Bigl(\frac{1}{|Y_d|}\Bigr)1_{\{|Y_d|\leq 1/T\}}\Bigr] = \P(|Y_d|\leq 1/T)\log T + \int_{T}^{\infty}\frac{\P(|Y_d|\leq 1/u)}{u}\,du \ll \frac{d\lVert \widehat{\eta_{\infty}}\rVert_d^d\log T}{T^2}\]
    for large $T$. It remains to show that
    \[\limsup_{d\to\infty}d\lVert \widehat{\eta_{\infty}}\rVert_d^d < \infty.\]
    Switching to polar coordinates and using the rotational invariance of $\widehat{\eta_{\infty}}$, we get
    \[d\lVert\widehat{\eta_{\infty}}\rVert_d^d = 2\pi d\int_{0}^{\infty}r|\widehat{\eta_{\infty}}(r)|^d\,dr.\]
    We decompose the range of integration into three regimes of values of $r$: small, intermediate and large. For large values, note that \eqref{eq:osc_integral} gives
    \[d\int_{C^2}^{\infty}r|\widehat{\eta_{\infty}}(r)|^d\,dr \leq dC^d\int_{C^2}^{\infty}r^{1-d/2}\,dr = \frac{2C^4d}{d-4},\]
    which is bounded as $d \to \infty$. To deal with the intermediate range, observe that
    \[d\int_{\delta}^{C^2}r|\widehat{\eta_{\infty}}(r)|^d\,dr \leq C^4d\Bigl(\max_{r\in[\delta,C^2]}|\widehat{\eta_{\infty}}(r)|\Bigr)^d,\]
    which goes to zero as $d \to \infty$. Indeed, the above maximum is less than $1$ since $|\widehat{\eta_{\infty}}(r)| < 1$ for all $r > 0$. Finally, to address the small values, note that if $\delta > 0$ is chosen to be sufficiently small, then the standard series expansion for Bessel functions (see \cite[Appendix B]{grafakos}) implies that $|\widehat{\eta_{\infty}}(r)| \leq 1-cr^2$ for all $0 \leq r \leq \delta$, where $c > 0$ is an absolute constant. Hence, we have
    \[d\int_{0}^{\delta}r|\widehat{\eta_{\infty}}(r)|^d\,dr \leq \int_{0}^{\delta}dr(1-cr^2)^{d-1}\,dr = \Bigl[-\frac{(1-cr^2)^d}{2c}\Bigr]_0^{\delta} <\frac{1}{2c},\]
    which completes the proof.
\end{proof}

\section{Concluding remarks}\label{sec:conc_rem}

In addition to the way we stated it, Problem \ref{prob:digraphs} is often phrased in terms of \emph{d-regular digraphs}. To see why, note that $A_n$ can be viewed as the adjacency matrix of a random $d$-regular directed graph on $n$ vertices, where we allow self-loops, but not multiple edges. Here, `$d$-regular' means that each vertex has both in- and out-degree equal to $d$. From this perspective, the matrices $C_n$ from our model are adjacency matrices of \emph{random $d$-regular Cayley graphs}. Due to their more explicit nature, random Cayley graphs have attracted a lot attention in Ramsey theory as a kind of structured proxy for random graphs. Particularly well-studied are the clique/chromatic number \cite{green-cayley, green-chromatic, mrazovic, campos-dahia-marciano, conlon-fox-pham-yepremyan}, diameter \cite{amir-gurel-gurevich, christofides-markstrom, marklof-strombergsson, helfgott-seress-zuk, eberhard-jezernik} and spectral/expansion properties \cite{alon-roichman, bourgain-gamburd} of random Cayley graphs. It is impossible for us to give a complete account of open problems in this vast area here. We only mention the important question of whether constant-degree random Cayley graphs of finite simple groups form a family of expanders, or equivalently possess a spectral gap (see e.g.\ \cite[Problem 79]{green-open-problems}).

We conclude this section by proposing the following problem, which may be worth considering as a kind of intermediate step towards Problem \ref{prob:digraphs}.

\begin{problem}
\label{prob:cayley}
What can be said about the limiting behaviour of ESDs of sparse random circulant matrices with respect to non-abelian groups? Specifically, let $C_n$ be a random $S_n$-circulant matrix with entries in $\{0,1\}$ and exactly $d$ ones in each row/column, where $S_n$ is the symmetric group. Does the sequence $(\mu_{C_n})_{n\geq1}$ converge weakly in probability?
\end{problem}

Problem \ref{prob:cayley} has some features in common with both Problem \ref{prob:digraphs} and our model. On the one hand, it retains both the sparsity and the non-commutativity of the matrices in question, which are crucial reasons why Problem \ref{prob:digraphs} is difficult. On the other hand, it possesses algebraic structure which is broadly similar to the model considered in our paper. It may be possible to take advantage of this structure, though it seems to us that this would require significant new ideas in the non-abelian case.

\bigskip

\noindent\textbf{Acknowledgements.} This work was supported by the Croatian Science Foundation under the project number HRZZ-IP-2022-10-5116 (FANAP). The author is grateful to Rudi Mrazović for encouragement and feedback on an earlier draft of this paper. He would also like to thank Sean Eberhard for several helpful comments and suggestions.

\appendix

\section{Some technical results}\label{app:tech_lemmas}

We collect here several auxiliary results of a technical nature. The first one asserts that the density of solutions to a system of linear equations over a finite abelian group equals that of the transposed system. This fact is presumably well-known, though we were not able to find a reference for it. Hence, we provide a short proof, which is based on Fourier analysis.

\begin{lemma}
\label{lm:lin_sys}
Let $G$ be a finite abelian group and let $r,s$ be positive integers. Let $A$ be an $r \times s$ matrix with integer entries. Then
\[\P_{x\in G^s}\bigl(Ax = 0\bigr) = \P_{y\in G^r}\bigl(A^Ty = 0\bigr).\]
\end{lemma}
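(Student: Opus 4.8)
The plan is to use Fourier analysis (Pontryagin duality) on $G$ to express both probabilities as averages of character sums and then observe that the two resulting expressions coincide by symmetry of the pairing between $G^r$ and $\widehat{G}^s$. Concretely, write $N = |G|$ and identify $\widehat{G} \cong G$ only at the level of counting; I will work with $\widehat{G}$ directly to avoid choosing an isomorphism. The starting point is the standard orthogonality relation: for any finite abelian group $H$ and $h \in H$, one has $\E_{\chi\in\widehat{H}}\chi(h) = 1_{\{h=0\}}$. Applying this coordinatewise, for $x \in G^s$ the indicator $1_{\{Ax = 0\}}$ equals $\E_{\psi\in\widehat{G}^r}\prod_{i=1}^{r}\psi_i\bigl((Ax)_i\bigr)$, where $\psi = (\psi_1,\dots,\psi_r)$.

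The key computation is then to expand $(Ax)_i = \sum_{j=1}^{s} A_{i,j} x_j$ and use that $\psi_i(A_{i,j}x_j) = \psi_i^{A_{i,j}}(x_j) = (A_{i,j}\cdot\psi_i)(x_j)$, where $n\cdot\chi$ denotes the character $x\mapsto\chi(nx) = \chi(x)^n$. Averaging over $x\in G^s$ and swapping the order of the two expectations gives
\[
\P_{x\in G^s}(Ax=0) = \E_{\psi\in\widehat{G}^r}\prod_{j=1}^{s}\E_{x_j\in G}\Bigl(\textstyle\sum_{i=1}^{r}A_{i,j}\cdot\psi_i\Bigr)(x_j) = \E_{\psi\in\widehat{G}^r}\prod_{j=1}^{s}1_{\{\sum_{i=1}^{r}A_{i,j}\psi_i = 0\}} = \P_{\psi\in\widehat{G}^r}\bigl(A^T\psi = 0\bigr),
\]
where in the last step the condition $\sum_i A_{i,j}\psi_i = 0$ for all $j$ is exactly $A^T\psi = 0$ in the group $\widehat{G}^r$ (written additively). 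Since $\widehat{G}$ is (non-canonically) isomorphic to $G$ as a finite abelian group, the right-hand side equals $\P_{y\in G^r}(A^Ty = 0)$, completing the proof.

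There is essentially no serious obstacle here; the only points requiring a modicum of care are bookkeeping ones: making sure the additive/multiplicative notation for the action of $\Z$ on $\widehat{G}$ is consistent with the matrix $A$ acting on $G^s$ (so that transposition really does swap the roles cleanly), and invoking $\widehat{G}\cong G$ at the end to pass from a statement about $\widehat{G}^r$ to one about $G^r$. If one prefers to avoid even mentioning $\widehat{G}$, an alternative is a purely linear-algebraic argument: decompose $G$ as a direct sum of cyclic groups and reduce to the case $G = \Z/n\Z$, where $\P_{x\in G^s}(Ax=0) = N^{-s}\cdot\#\{x \in (\Z/n)^s : Ax \equiv 0\}$ can be computed via Smith normal form of $A$ over $\Z$ — and Smith normal form is manifestly transpose-symmetric (the elementary divisors of $A$ and $A^T$ agree). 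I would present the Fourier proof as the main one since it is shorter and coordinate-free, but might remark on the Smith normal form viewpoint as it dovetails with the use of Smith normal form already made in Section \ref{sec:conv_in_prob}.
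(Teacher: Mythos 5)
Your proof is correct and uses essentially the same Fourier-analytic approach as the paper: expand the indicator $1_{\{Ax=0\}}$ via orthogonality of characters, swap the order of averaging, apply orthogonality again, and conclude by the (non-canonical) isomorphism $\widehat{G}\cong G$. The paper packages the same computation slightly differently via convolutions and Fourier inversion, but the underlying argument is identical; your side remark about Smith normal form is a nice observation but not needed.
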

\begin{proof}
    For each $j \in [s]$, consider 
    \[w_j \vcentcolon= \sum_{x\in G}1_{\{(A_{i,j}x)_{i\in[r]}\}}\]
    as a function on $G^r$. Then note that
    \[\P_{x\in G^s}\bigl(Ax = 0\bigr) = \frac{1}{|G|^s}(w_1*\ldots*w_s)(0).\]
    By the Fourier inversion formula and the fact that the Fourier transform diagonalises convolution, we have
    \[(w_1*\ldots*w_s)(0) = \frac{1}{|G^r|}\sum_{\gamma\in\widehat{G^r}}\prod_{j=1}^{s}\widehat{w_j}(\gamma).\]
    But note that, on writing $\gamma = \gamma_1 \otimes \ldots \otimes \gamma_r$ with $\gamma_i \in \widehat{G}$ for $i \in [r]$,
    \[\widehat{w_j}(\gamma) = \sum_{x \in G}\reallywidehat{1_{\{(A_{i,j}x)_{i\in[r]}\}}}(\gamma) = \sum_{x\in G}\prod_{i=1}^{r}\overline{\gamma_i(A_{i,j}x)} = \sum_{x \in G}\Biggl(\prod_{i=1}^{r}\gamma_i^{-A_{i,j}}\Biggr)(x).\]
    Hence, by orthogonality of characters, it follows that
    \[\widehat{w_j}(\gamma) = \begin{cases}|G| & \text{if } \prod_{i=1}^{r}\gamma_i^{A_{i,j}} = 1\\0 & \text{otherwise}\end{cases}.\]
    We obtain that
    \[\P_{x\in G^s}\bigl(Ax = 0\bigr) = \frac{1}{|G^r|}\Bigl|\Bigl\{\gamma \in \widehat{G^r}\ \Big|\ \forall j \in [s]\ \prod_{i=1}^{r}\gamma_i^{A_{i,j}} = 1 \Bigr\}\Bigr|,\]
    whence the conclusion follows by invoking the isomorphism $\widehat{G^r} \cong G^r$.
\end{proof}

\begin{remark}
\label{rem:adjoint}
Lemma \ref{lm:lin_sys} can also be proved by exploiting duality more directly, i.e.\ by considering the adjoint 
\[\alpha^* \colon \reallywidehat{G^r} \to \widehat{G^s}, \quad \gamma \mapsto \gamma \circ \alpha\] 
of the homomorphism $\alpha \colon G^s \to G^r$ induced by $A$.
\end{remark}

The following is a version of the method of moments \cite[Theorem 3.9]{fleermann-kirsch} adapted to our needs. Specifically, it says that for probability measures on $[0,\infty)$, it suffices to consider even moments. 

\begin{proposition}
\label{prop:moment_method}
For each $n \geq 1$ let $M_n$ be a random complex matrix. Suppose that there exists a sequence of non-negative real numbers $(b_k)_{k\geq1}$ such that:
\begin{enumerate}[(i)]
    \item for each $k \geq 1$ we have 
    \[\lim_{n\to\infty}\E\Bigl[\int_{[0,\infty)} t^{2k} \,d\nu_{M_n}(t)\Bigr] = b_k, \quad \lim_{n\to\infty}\mathrm{Var}\Bigl(\int_{[0,\infty)} t^{2k} \,d\nu_{M_n}(t)\Bigr) = 0;\]
    \item $\limsup_{k\to\infty}b_k^{1/(2k)}/(2k)<\infty$.
\end{enumerate}
Then there exists a probability measure $\nu$ on $[0,\infty)$ such that $\nu_{M_n} \to \nu$ weakly in probability.
\end{proposition}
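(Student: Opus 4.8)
The plan is to symmetrise and then invoke the ordinary method of moments on $\R$. For each $n$, view $\nu_{M_n}$ as a (random) probability measure on $\R$ supported in $[0,\infty)$, let $R \colon \R \to \R$, $t \mapsto -t$ be the reflection, and set
\[\nu_{M_n}^{\mathrm{sym}} \vcentcolon= \tfrac12\bigl(\nu_{M_n} + R_*\nu_{M_n}\bigr).\]
Since $\mu \mapsto \tfrac12(\mu + R_*\mu)$ is continuous on $\mathcal{M}_1(\R)$, $\nu_{M_n}^{\mathrm{sym}}$ is again a random probability measure (when $M_n$ is square it is precisely the spectral distribution of the Hermitian matrix $\bigl(\begin{smallmatrix}0 & M_n\\ M_n^{\dagger} & 0\end{smallmatrix}\bigr)$, whose eigenvalues are $\pm\sigma_k(M_n)$). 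Its moments are immediate: for every $k \geq 1$,
\[\int_{\R}t^{2k}\,d\nu_{M_n}^{\mathrm{sym}}(t) = \int_{[0,\infty)}t^{2k}\,d\nu_{M_n}(t), \qquad \int_{\R}t^{2k-1}\,d\nu_{M_n}^{\mathrm{sym}}(t) = 0,\]
the second equality because $t \mapsto t^{2k-1}$ is odd.

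Next I would apply the classical method of moments \cite[Theorem 3.9]{fleermann-kirsch} to the sequence $(\nu_{M_n}^{\mathrm{sym}})_{n\geq1}$ with target moments $m_{2k} \vcentcolon= b_k$, $m_{2k-1} \vcentcolon= 0$ (and $m_0 \vcentcolon= 1$). For even $j$ the required convergence in expectation and variance decay of the $j$-th moment are exactly hypothesis (i); for odd $j$ they are trivial, since those moments vanish identically. The Carleman-type growth condition on the target sequence reads $\limsup_{k\to\infty}m_{2k}^{1/(2k)}/(2k) < \infty$, which via $m_{2k} = b_k$ is precisely hypothesis (ii) (and any $b_k$ equal to $0$ only helps, making the series $\sum_k m_{2k}^{-1/(2k)}$ diverge outright). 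Hence there is a probability measure $\widetilde{\nu}$ on $\R$, necessarily symmetric since its odd moments vanish and it is determined by its moments, such that $\nu_{M_n}^{\mathrm{sym}} \to \widetilde{\nu}$ weakly in probability. Transporting back to $[0,\infty)$ is then a one-liner: since $\nu_{M_n}$ is supported in $[0,\infty)$ one has $|\cdot|_*\nu_{M_n}^{\mathrm{sym}} = \nu_{M_n}$, so continuity of the pushforward along $t \mapsto |t|$ (weak-to-weak) gives $\nu_{M_n} \to \nu \vcentcolon= |\cdot|_*\widetilde{\nu}$ weakly in probability, with $\nu$ a probability measure on $[0,\infty)$, as desired.

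The argument is essentially bookkeeping once one commits to symmetrising, and the one point that genuinely requires thought is precisely this choice. Pushing forward instead by $t \mapsto t^2$ would shift the moment indices, so that hypothesis (ii) controls $b_{2k}^{1/(2k)}$ only up to a factor of order $k^2$ rather than $k$; the resulting Carleman series would then be comparable to $\sum_k k^{-2} < \infty$ and fail to diverge. Symmetrisation, by contrast, leaves the even moments exactly where they are, so (ii) plugs straight into the Carleman condition $\sum_k b_k^{-1/(2k)} = \infty$. The remaining points — continuity and measurability of the symmetrisation and absolute-value pushforward maps, and the degenerate case $b_k = 0$ — are routine and I would dispatch them briefly.
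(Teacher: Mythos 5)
Your proof is correct and follows essentially the same route as the paper's: the symmetrisation $\nu_{M_n}^{\mathrm{sym}} = \tfrac12(\nu_{M_n}+R_*\nu_{M_n})$ is identical to the paper's $\widetilde{\nu}_n = \frac{1}{2r_n}\sum_k(\delta_{\sigma_k(M_n)}+\delta_{-\sigma_k(M_n)})$, and both then apply \cite[Theorem 3.9]{fleermann-kirsch} with even moments $b_k$, odd moments zero, and push forward by $|\cdot|$. Your aside explaining why symmetrisation is chosen over pushforward by $t\mapsto t^2$ (to keep hypothesis (ii) aligned with Carleman) is a nice addition but does not change the substance.
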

\begin{proof}
    For each $n \geq 1$ consider the following random probability measure on $\R$:
    \[\widetilde{\nu}_n = \frac{1}{2r_n}\sum_{k=1}^{r_n}(\delta_{\sigma_k(M_n)}+\delta_{-\sigma_k(M_n)}),\]
    where $r_n$ denotes the dimension of $M_n$. If $k \geq 1$ is even, then
    \[\int_{\R} t^k\,d\widetilde{\nu}_n(t) = \int_{[0,\infty)} t^k\,d\nu_{M_n}(x),\]
    whereas if $k \geq 1$ is odd, then
    \[\int_{\R} t^k\,d\widetilde{\nu}_n(t) = 0.\]
    Therefore, by \cite[Theorem 3.3 (ii), Theorem 3.9 (ii)]{fleermann-kirsch}, we obtain a probability measure $\mu$ on $\R$ such that $\widetilde{\nu}_n \to \mu$ weakly in probability. Since $\nu_{M_n} = |\cdot|_*\widetilde{\nu}_n$, the conclusion follows with $\nu = |\cdot|_*\mu$.
\end{proof}

\bibliographystyle{plain}
\bibliography{references}

\begin{thebibliography}{10}

\bibitem{adamczak}
Rados{\l}aw Adamczak.
\newblock Random non-{A}belian {$G$}-circulant matrices. {S}pectrum of random convolution operators on large finite groups.
\newblock {\em Random Matrices Theory Appl.}, 10(3):Paper No. 2250002, 40, 2021.

\bibitem{alon-roichman}
Noga Alon and Yuval Roichman.
\newblock Random {C}ayley graphs and expanders.
\newblock {\em Random Structures Algorithms}, 5(2):271--284, 1994.

\bibitem{amir-gurel-gurevich}
Gideon Amir and Ori Gurel-Gurevich.
\newblock The diameter of a random {C}ayley graph of {$\mathbb{Z}_q$}.
\newblock {\em Groups Complex. Cryptol.}, 2(1):59--65, 2010.

\bibitem{banerjee-bose}
Debapratim Banerjee and Arup Bose.
\newblock Patterned sparse random matrices: a moment approach.
\newblock {\em Random Matrices Theory Appl.}, 6(3):1750011, 40, 2017.

\bibitem{barber}
Ben Barber.
\newblock Small sums of five roots of unity.
\newblock {\em Bull. Lond. Math. Soc.}, 55(4):1890--1906, 2023.

\bibitem{barrera-manrique}
Gerardo Barrera and Paulo Manrique.
\newblock Salem-{Z}ygmund inequality for locally sub-{G}aussian random variables, random trigonometric polynomials, and random circulant matrices.
\newblock {\em Bol. Soc. Mat. Mex. (3)}, 28(2):Paper No. 45, 29, 2022.

\bibitem{bordenave-chafai}
Charles Bordenave and Djalil Chafa\"{\i}.
\newblock Around the circular law.
\newblock {\em Probab. Surv.}, 9:1--89, 2012.

\bibitem{bose-gangopadhyay-sen}
Arup Bose, Sreela Gangopadhyay, and Arnab Sen.
\newblock Limiting spectral distribution of {$XX'$} matrices.
\newblock {\em Ann. Inst. Henri Poincar\'e{} Probab. Stat.}, 46(3):677--707, 2010.

\bibitem{bose-hazra-saha}
Arup Bose, Rajat~Subhra Hazra, and Koushik Saha.
\newblock Spectral norm of circulant-type matrices.
\newblock {\em J. Theoret. Probab.}, 24(2):479--516, 2011.

\bibitem{bose-mitra}
Arup Bose and Joydip Mitra.
\newblock Limiting spectral distribution of a special circulant.
\newblock {\em Statist. Probab. Lett.}, 60(1):111--120, 2002.

\bibitem{bose-saha}
Arup Bose and Koushik Saha.
\newblock {\em Random circulant matrices}.
\newblock CRC Press, Boca Raton, FL, 2019.

\bibitem{bose-sen}
Arup Bose and Arnab Sen.
\newblock Another look at the moment method for large dimensional random matrices.
\newblock {\em Electron. J. Probab.}, 13:no. 21, 588--628, 2008.

\bibitem{bose-subhra-saha}
Arup Bose, Rajat Subhra~Hazra, and Koushik Saha.
\newblock Patterned random matrices and method of moments.
\newblock In {\em Proceedings of the {I}nternational {C}ongress of {M}athematicians. {V}olume {IV}}, pages 2203--2231. Hindustan Book Agency, New Delhi, 2010.

\bibitem{bourgain-gamburd}
Jean Bourgain and Alex Gamburd.
\newblock Uniform expansion bounds for {C}ayley graphs of {${\rm SL}_2(\mathbb{F}_p)$}.
\newblock {\em Ann. of Math. (2)}, 167(2):625--642, 2008.

\bibitem{bryc-dembo-jiang}
W\l{}odzimierz Bryc, Amir Dembo, and Tiefeng Jiang.
\newblock Spectral measure of large random {H}ankel, {M}arkov and {T}oeplitz matrices.
\newblock {\em Ann. Probab.}, 34(1):1--38, 2006.

\bibitem{campos-dahia-marciano}
Marcelo Campos, Gabriel Dahia, and Jo\~ao~Pedro Marciano.
\newblock On the independence number of sparser random {C}ayley graphs.
\newblock {\em J. Lond. Math. Soc. (2)}, 110(6):Paper No. e70041, 54, 2024.

\bibitem{chafai}
Djalil Chafa\"{\i}.
\newblock Around the circular law: an update.
\newblock \url{https://djalil.chafai.net/blog/2018/11/04/around-the-circular-law-an-update/}.

\bibitem{christofides-markstrom}
Demetres Christofides and Klas Markstr\"om.
\newblock The thresholds for diameter 2 in random {C}ayley graphs.
\newblock {\em Random Structures Algorithms}, 45(2):218--235, 2014.

\bibitem{conlon-fox-pham-yepremyan}
David Conlon, Jacob Fox, Huy~Tuan Pham, and Liana Yepremyan.
\newblock On the clique number of random {C}ayley graphs and related topics.
\newblock arXiv:2412.21194.

\bibitem{diaconis}
Persi Diaconis.
\newblock {\em Group representations in probability and statistics}, volume~11 of {\em Institute of Mathematical Statistics Lecture Notes---Monograph Series}.
\newblock Institute of Mathematical Statistics, Hayward, CA, 1988.

\bibitem{eberhard-jezernik}
Sean Eberhard and Urban Jezernik.
\newblock Babai's conjecture for high-rank classical groups with random generators.
\newblock {\em Invent. Math.}, 227(1):149--210, 2022.

\bibitem{eberhard-ocathain}
Sean Eberhard and Padraig \'O~Cath\'ain.
\newblock Random circulant determinants.
\newblock In preparation.

\bibitem{fleermann-kirsch}
Michael Fleermann and Werner Kirsch.
\newblock Proof methods in random matrix theory.
\newblock {\em Probab. Surv.}, 20:291--381, 2023.

\bibitem{girko}
V.~L. Girko.
\newblock The circular law.
\newblock {\em Teor. Veroyatnost. i Primenen.}, 29(4):669--679, 1984.

\bibitem{grafakos}
Loukas Grafakos.
\newblock {\em Classical {F}ourier analysis}, volume 249 of {\em Graduate Texts in Mathematics}.
\newblock Springer, New York, third edition, 2014.

\bibitem{green-harmonic-analysis}
Ben Green.
\newblock Some applications of harmonic analysis to arithmetic combinatorics.
\newblock 2001.
\newblock Smith-Knight Prize essay, Cambridge University.

\bibitem{green-cayley}
Ben Green.
\newblock Counting sets with small sumset, and the clique number of random {C}ayley graphs.
\newblock {\em Combinatorica}, 25(3):307--326, 2005.

\bibitem{green-chromatic}
Ben Green.
\newblock On the chromatic number of random {C}ayley graphs.
\newblock {\em Combin. Probab. Comput.}, 26(2):248--266, 2017.

\bibitem{green-open-problems}
Ben Green.
\newblock 100 open problems.
\newblock 2024.
\newblock manuscript.

\bibitem{helfgott-seress-zuk}
Harald~A. Helfgott, \'Akos Seress, and Andrzej Zuk.
\newblock Random generators of the symmetric group: diameter, mixing time and spectral gap.
\newblock {\em J. Algebra}, 421:349--368, 2015.

\bibitem{jacobson}
Nathan Jacobson.
\newblock {\em Basic algebra. {I}}.
\newblock W. H. Freeman and Company, New York, second edition, 1985.

\bibitem{lam-leung}
T.~Y. Lam and K.~H. Leung.
\newblock On vanishing sums of roots of unity.
\newblock {\em J. Algebra}, 224(1):91--109, 2000.

\bibitem{latala-swiatkowski}
Rafa\l{} Lata\l{}a and Witold \'Swi\k{a}tkowski.
\newblock Norms of randomized circulant matrices.
\newblock {\em Electron. J. Probab.}, 27:Paper No. 80, 23, 2022.

\bibitem{marklof-strombergsson}
Jens Marklof and Andreas Str\"ombergsson.
\newblock Diameters of random circulant graphs.
\newblock {\em Combinatorica}, 33(4):429--466, 2013.

\bibitem{marchenko-pastur}
V.~A. Mar\v{c}enko and L.~A. Pastur.
\newblock Distribution of eigenvalues in certain sets of random matrices.
\newblock {\em Mat. Sb. (N.S.)}, 72(114):507--536, 1967.

\bibitem{meckes-cyclic}
Mark~W. Meckes.
\newblock Some results on random circulant matrices.
\newblock In {\em High dimensional probability {V}: the {L}uminy volume}, volume~5 of {\em Inst. Math. Stat. (IMS) Collect.}, pages 213--223. Inst. Math. Statist., Beachwood, OH, 2009.

\bibitem{meckes-abelian}
Mark~W. Meckes.
\newblock The spectra of random abelian {$G$}-circulant matrices.
\newblock {\em ALEA Lat. Am. J. Probab. Math. Stat.}, 9(2):435--450, 2012.

\bibitem{mertens}
Franz Mertens.
\newblock Ein {B}eitrag zur analytischen {Z}ahlentheorie.
\newblock {\em J. Reine Angew. Math.}, 78:46--62, 1874.

\bibitem{mrazovic}
Rudi Mrazovi\'c.
\newblock One-point concentration of the clique and chromatic numbers of the random {C}ayley graph on {$\mathbb{F}_2^n$}.
\newblock {\em SIAM J. Discrete Math.}, 31(1):143--154, 2017.

\bibitem{myerson}
Gerald Myerson.
\newblock Unsolved {P}roblems: {H}ow {S}mall {C}an a {S}um of {R}oots of {U}nity {B}e?
\newblock {\em Amer. Math. Monthly}, 93(6):457--459, 1986.

\bibitem{sah-sahasrabudhe-sawhney}
A.~Sah, J.~Sahasrabudhe, and M.~Sawhney.
\newblock The limiting spectral law for sparse iid matrices.
\newblock arXiv:2310.17635.

\bibitem{tao-roots}
Terence Tao.
\newblock How small can a sum of a few roots of unity be?
\newblock \url{https://mathoverflow.net/questions/46068/how-small-can-a-sum-of-a-few-roots-of-unity-be}.

\bibitem{tao-matrices}
Terence Tao.
\newblock {\em Topics in random matrix theory}, volume 132 of {\em Graduate Studies in Mathematics}.
\newblock American Mathematical Society, Providence, RI, 2012.

\bibitem{tao-vu-additive}
Terence Tao and Van Vu.
\newblock {\em Additive combinatorics}, volume 105 of {\em Cambridge Studies in Advanced Mathematics}.
\newblock Cambridge University Press, Cambridge, 2006.

\bibitem{tao-vu-moment}
Terence Tao and Van Vu.
\newblock Random matrices: the circular law.
\newblock {\em Commun. Contemp. Math.}, 10(2):261--307, 2008.

\bibitem{tao-vu-circular}
Terence Tao and Van Vu.
\newblock Random matrices: universality of {ESD}s and the circular law.
\newblock {\em Ann. Probab.}, 38(5):2023--2065, 2010.
\newblock With an appendix by Manjunath Krishnapur.

\bibitem{tikhomirov}
K.~Tikhomirov.
\newblock Quantitative invertibility of non-{H}ermitian random matrices.
\newblock arXiv:2206.00601.

\bibitem{wigner}
Eugene~P. Wigner.
\newblock On the distribution of the roots of certain symmetric matrices.
\newblock {\em Ann. of Math. (2)}, 67:325--327, 1958.

\end{thebibliography}

\end{document}